\newcommand\TABLE{7}
\newcommand\RankOne{2}
\newcommand\Neighbor{Corollary~4.8}
\newcommand\obtuse{Theorem~4.5}
\def\@settitle{%
  \baselineskip14\p@\relax
  {\Large\bfseries
    \@title}}
\def\@setauthors{%
  \begingroup
  \def\thanks{\protect\thanks@warning}%
  \trivlist \footnotesize \@topsep45\p@\relax \advance\@topsep by
  -\baselineskip
\item\relax \author@andify\authors \def\\{\protect\linebreak}%
  {\sc\fontsize{12}{10}\selectfont\authors}%
  \ifx\@empty\contribs \else ,\penalty-3 \space \@setcontribs
  \@closetoccontribs \fi
  \endtrivlist
  \endgroup
}
\def\@secnumfont{\bfseries}%
\def\section{\@startsection{section}{1}%
  \z@{.7\linespacing\@plus\linespacing}{.5\linespacing}%
  {\normalfont\bf}}
\renewcommand{\BibLabel}{%

  \Hy@raisedlink{\hyper@anchorstart{cite.\CurrentBib}\hyper@anchorend}%
  [\thebib]\hfill%
}
\newcommand{\arxiv}[1]{\href{http://arxiv.org/abs/#1}{{\tt
      arxiv:\hspace{0pt}#1}}}
\setlist[enumerate,1]{label=\textit{\alph*)},ref=\textit{\alph*)}}
\setlist[enumerate,2]{label=\textit{\roman*)},ref=\textit{\roman*)}}
\numberwithin{equation}{section}
\theoremstyle{plain}
\newtheorem{theorem}{Theorem}[section]
\newaliascnt{lemma}{theorem}
\newtheorem{lemma}[lemma]{Lemma}
\newaliascnt{corollary}{theorem}
\newtheorem{corollary}[corollary]{Corollary}
\newaliascnt{proposition}{theorem}
\newtheorem{proposition}[proposition]{Proposition}
\theoremstyle{definition}
\newtheorem{definition}[theorem]{Definition}
\newtheorem*{example}{Example}
\newtheorem*{examples}{Examples}
\newtheorem*{remark}{Remark}
\newtheorem*{remarks}{Remarks}
\font\polish=plr10
\renewcommand\l{{\polish\char'252}}
\newcommand\pole{{\polish\char'246}}
\newcommand\polS{\'S}
\newcommand\cC{{\mathcal C}}
\newcommand\cF{{\mathcal F}}
\newcommand\cL{{\mathcal L}}
\newcommand\cO{{\mathcal O}}
\newcommand\cV{{\mathcal V}}
\newcommand\cX{{\mathcal X}}
\def\|#1|{\operatorname{#1}}
\newcommand\NN{{\mathbb N}}
\newcommand\QQ{{\mathbb Q}}
\newcommand\ZZ{{\mathbb Z}}
\renewcommand\a{\alpha}
\renewcommand\b{\beta}
\newcommand\<{\langle}
\renewcommand\>{\rangle}
\newcommand\auf{\twoheadrightarrow}
\newcommand\into{\hookrightarrow}
\newcommand\Aq{{\overline A}}
\newcommand\Gq{{\overline G}}
\newcommand\Hq{{\overline H}}
\newcommand\Vq{{\overline V}}
\newcommand\Xq{{\overline X}}
\newcommand\Yq{{\overline Y}}
\newcommand\pfeil{\rightarrow}
\newcommand\Gm{{\mathbf G_m}}
\newcommand\A{{\mathsf A}}
\newcommand\B{{\mathsf B}}
\newcommand\G{{\mathsf G}}
\renewcommand\P{{\mathbf P}}
\renewcommand\phi{\varphi}
\newcommand\PGL{{\operatorname{PGL}}}
\newcommand\SL{{\operatorname{SL}}}
\newcommand\SO{{\operatorname{SO}}}
\newcommand\Sp{{\operatorname{Sp}}}
\newcommand\leer{\varnothing}
\newcommand\cxymatrix[1]{\vcenter{\begin{xy}\xymatrix@=15pt{#1}\end{xy}}}
\def\tP/{{\ifmmode (p)\else$(p)$\fi}}
\def\tB/{{\ifmmode (b)\else$(b)$\fi}}
\def\tA/{{\ifmmode (a)\else$(a)$\fi}}
\def\tAA/{{\ifmmode (2a)\else$(2a)$\fi}}
\begin{document}

\title{Localization of spherical varieties}

\author{Friedrich Knop\newline FAU Erlangen-Nürnberg}
\address {Department Mathematik, Emmy-Noether-Zentrum,
FAU Erlangen-Nürnberg, Cauerstraße 11, 91058 Erlangen, Germany}
\email{friedrich.knop@fau.de}

\begin{abstract}

  We prove some fundamental structural results for spherical varieties
  in arbitrary characteristic. In particular, we study Luna's two
  types of localization and use it to analyze spherical roots, colors
  and their interrelation. At the end, we propose a preliminary
  definition of a $p$-spherical system.

\end{abstract}

\maketitle

\setlength{\mathindent}{60pt}

\section{Introduction}
\label{sec:Introduction}

Let $G$ be a connected reductive group defined over an algebraically
closed ground field $k$ of arbitrary characteristic $p$. A normal
$G$-variety $X$ is called \emph{spherical} if a Borel subgroup $B$ of
$G$ has an open orbit in $X$. In characteristic $0$, there exists by
now an extensive body of research on spherical varieties culminating
in a complete classification (Luna-Vust~\cite{LuVu},
Luna~\cite{LuTypA}, Losev~\cite{Losev}, Cupit-Foutou~\cite{CuFou}, and
Bravi-Pezzini~\cite{BrPezz1,BrPezz2,BrPezz3}).

In positive characteristic, much less work has been done. Most papers
dealing with spherical varieties in positive characteristic are
restricted to particular examples (like flag or symmetric varieties)
or other special classes of spherical varieties (like varieties
obtained by reduction mod $p$).

The present paper is part of a program to develop a general theory of
spherical varieties in arbitrary characteristic, leading possibly to a
classification, as well. In this sense, the old paper \cite{Hyd} on a
characteristic free approach to spherical embeddings is already part
of the program.

A crucial portion of Luna's theory of spherical varieties depends on
Akhiezer's classification, \cite{Akh}, of spherical varieties of rank
1. In the present paper we present the results which are independent
of that classification. On the other hand, in the companion paper
\cite{SpRoot} we determine all spherical varieties of rank $1$ in
arbitrary characteristic and present results whose proofs depend (so
far) on it.

More precisely, in the present paper we recover most results of Luna's
paper \cite{LuGC} on the ``big cell''. We start with generalizing
Luna's fundamental relations for the colors of a spherical variety. At
this point, we introduce additional data needed to describe a
spherical variety in positive characteristic. These are certain
$p$-powers $q_{D,\a}$ where $\a$ is a simple root and $D$ is a color
``moved by $\a$''. Our exposition of this part is different (and we
think simpler) than Luna's and seems to be new even in characteristic
zero.

Next, we define the notion of spherical roots as properly normalized
normal vectors to the valuation cone. Luna's method of viewing them as
weights attached to a wonderful variety does not generalize.

Next, we consider Luna's construction from \cite{LuGC} which is called
\emph{localization at $S$}. Basically, it consists in analyzing the
open Bia\l ynicki-Birula cell with respect to a dominant $1$-parameter
subgroup of $G$. Our results are more general than Luna's, even in
characteristic zero, since Luna restricts his attention to wonderful
varieties while we formulate everything for so-called toroidal
varieties. From this, we derive Luna's important result that the
colors are, to a large extent, already determined by the spherical
roots.

Then we consider a construction which is called \emph{localization at
  $\Sigma$}. This procedure amounts to analyzing $G$-orbits of a
toroidal variety. Since this technique is mostly classical only the
proof for the behavior of type-\tA/-colors is new. Unfortunately, our
results remain somewhat incomplete since it is unknown whether orbit
closures in toroidal spherical embeddings are normal or not.

We use localization at $\Sigma$ to prove the important non-positivity
result \autoref{cor:bound}. As opposed to Luna's proof who uses
Wasserman's tables of rank-2-varieties \cite{Wass} our proof is
conceptual.

In the final \autoref{sec:SphericalSystem} we are making an attempt to
generalize Luna's notion of a spherical system to positive
characteristic. This is a combinatorial structure describing the roots
and the colors of a spherical variety. As additional data we propose
the $p$-powers $q_{\a,D}$ mentioned above and we hope that, at least
for $p\ne2,3$, these data are enough to describe a spherical
variety. As for the axioms, we restrict ourselves to those which
immediately generalize axioms in characteristic zero. Conditions which
are only meaningful in positive characteristic (like bounding the
denominators of the pairings $\delta_D(\alpha)$) are deferred to
future work.

So additional axioms will have to be added on at a later stage.

\medskip

\noindent{\bf Acknowledgment:} I would like to thank Guido Pezzini for
many discussions on the matter of this paper. In particular, the main
idea in the proof of \autoref{prop:typeAcolorInequal} is due to him.

\medskip

\noindent{\bf Notation:} In the entire paper, the ground field $k$ is
algebraically closed. Its characteristic exponent is denoted by $p$,
i.e., $p=1$ if $\|char|k=0$ and $p=\|char|k$, otherwise. The group $G$
is connected reductive, $B\subseteq G$ a Borel subgroup, and
$T\subseteq B$ a maximal torus. Let $\Xi(T)=\Xi(B)$ be its character
group. The set of simple roots with respect to $B$ is denoted by
$S\subset\Xi(T)$.

A rational function $f$ on $X$ is $B$-semiinvariant if there is a
character $\chi_f\in\cX(B)=\cX(T)$ such that
$f(b^{-1}x)=\chi_f(b)f(x)$ for all $b\in B$ and generic $x\in X$. If
$X$ is spherical, the character $\chi_f$ determines $f$ up to a
non-zero scalar. Let $\Xi(X)\subseteq \Xi(T)$ be the set of characters
of the form $\chi_f$. It is a finitely generated abelian group whose
rank is called the rank of $X$. We also use
$\Xi_\QQ(X):=\Xi(X)\otimes\QQ$ and $\Xi_p(X):=\Xi(X)\otimes\ZZ_p$ with
$\ZZ_p:=\ZZ[\frac1p]$.

\section{Colors}\label{sec:Colors}

Many properties of a spherical variety are determined by two sets of
data and their interrelation: colors and valuations. We start with
colors. Our results generalize those of \cite{LuGC} in characteristic
zero but the approach is different. We do not use compactifications
but use the completeness of flag varieties instead.

Let $X$ be a spherical $G$-variety with group of characters $\Xi(X)$
and let
\begin{equation}
  N_\QQ(X)=\|Hom|(\Xi(X),\QQ).
\end{equation}
A \emph{color} of $X$ is an irreducible divisor which is $B$- but not
$G$-invariant. Every color $D$ produces an element $\delta_D\in
N_\QQ(X)$ by
\begin{equation}
  \delta_D(\chi_f):=v_D(f)
\end{equation}
for all $B$-semiinvariants $f$. Here $v_D$ is the valuation of $k(X)$
attached to $D$. The color $D$ is, in general, not uniquely determined
by $\delta_D$.

Since $X$ is spherical, we can choose a point $x_0\in X$ such that
$Bx_0$ is open and dense in $X$. Let $\Delta(X)$ be the set of colors
of $X$. Since every color intersects the open $G$-orbit $Gx_0$ we have
$\Delta(X)=\Delta(Gx_0)=\Delta(G/H)$ where $H=G_{x_0}$ is the isotropy
subgroup scheme of $x_0$. We start by recalling a well-known formula
for the number of colors.

\begin{proposition}\label{prop:NumberColors}

  Let $G$ be a semisimple group and $H\subseteq G$ a spherical
  subgroup. Then
  \begin{equation}\label{eq:40}
    \#\Delta(G/H)=\|rk|G/H+\|rk|\Xi(H).
  \end{equation}

\end{proposition}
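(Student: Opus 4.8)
The plan is to build a single four-term exact sequence tying together $\Xi(G/H)$, the free abelian group on the colors, and the Picard groups of $G/H$ and of its open $B$-orbit, and then to extract \eqref{eq:40} by comparing ranks. Write $X=G/H$, take $x_0=eH$, set $B_0=B\cap H$, and let $\Omega=Bx_0\cong B/B_0$ be the open $B$-orbit. Since $X$ is $G$-homogeneous it has no $G$-invariant prime divisors, so the colors are exactly the $B$-invariant prime divisors, and $X\setminus\Omega$ is their union, each of codimension one.

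First I would observe that $X$ has no non-constant invertible regular functions: pullback along $G\auf X$ embeds $\cO(X)^\times$ into $\cO(G)^\times$, and $\cO(G)^\times=k^\times$ because $\Xi(G)=0$ for $G$ semisimple. Hence the divisor map $f\mapsto\|div|(f)$ is injective on $k(X)^{(B)}/k^\times$. The divisor of a $B$-semiinvariant function is $B$-invariant, hence supported on the colors, so this map takes values in $\bigoplus_{D\in\Delta(X)}\ZZ D$; conversely, if $\|div|(f)$ is $B$-invariant then each $b\mapsto(b\cdot f)/f$ lies in $\cO(X)^\times=k^\times$, so $f$ is $B$-semiinvariant. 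Using $k(X)^{(B)}/k^\times\cong\Xi(X)$ from the notation section, the kernel of the excision map $\bigoplus_D\ZZ D\to\|Pic|(X)$ is exactly the image of $\Xi(X)$. Splicing this with the excision sequence for the smooth variety $X$ and its open subset $\Omega$ gives the exact sequence
\begin{equation}
  0\to\Xi(X)\to\bigoplus_{D\in\Delta(X)}\ZZ D\to\|Pic|(X)\to\|Pic|(\Omega)\to0.
\end{equation}

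Comparing ranks, the alternating sum of the four ranks vanishes, so $\#\Delta(X)=\|rk|\Xi(X)+\|rk|\|Pic|(X)-\|rk|\|Pic|(\Omega)$. By definition $\|rk|\Xi(X)=\|rk|G/H$. For the middle term I would use the standard sequence $\Xi(G)\to\Xi(H)\to\|Pic|(G/H)\to\|Pic|(G)$; as $G$ is semisimple, $\Xi(G)$ and $\|Pic|(G)$ are finite, whence $\|rk|\|Pic|(X)=\|rk|\Xi(H)$.

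The step I expect to be the crux is showing $\|Pic|(\Omega)$ is finite, i.e.\ of rank $0$. The analogous sequence $\Xi(B)\to\Xi(B_0)\to\|Pic|(B/B_0)\to\|Pic|(B)$ applies, with $\|Pic|(B)=0$ since $B$ is connected solvable (as a variety a product of copies of $\Ga$ and $\Gm$); thus $\|Pic|(\Omega)$ is a quotient of $\Xi(B_0)$, and it suffices that the restriction $\Xi(B)\to\Xi(B_0)$ be surjective up to finite index. Picking a maximal torus $T_0$ of $B_0$ inside $T$, characters of $B_0$ are determined by their restriction to $T_0$, so $\|rk|\Xi(B_0)=\|rk|\Xi(T_0)$, while $\Xi(T)\to\Xi(T_0)$ is surjective because a subtorus splits off up to isogeny. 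Hence $\|rk|\|Pic|(\Omega)=0$ and the identity collapses to \eqref{eq:40}. Since only ranks intervene (equivalently, one may tensor with $\QQ$), the possible non-reducedness or disconnectedness of $H$ and of $B_0$ causes no trouble.
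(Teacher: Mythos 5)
Your proof is correct and takes essentially the same route as the paper's: both compute $\|rk|\|Pic|(G/H)$ in two ways, once as $\|rk|\Xi(H)$ via (equivariant) line bundles together with finiteness/torsion statements valid for semisimple $G$, and once as $\#\Delta(G/H)-\|rk|G/H$ via the excision sequence over the open $B$-orbit. Your only deviations are cosmetic: you splice the two steps into a single four-term exact sequence, identify the kernel of $\bigoplus_D\ZZ D\to\|Pic|(X)$ with $\Xi(X)$ through semiinvariant rational functions, and prove $\|rk|\|Pic|(\Omega)=0$ by a character argument for $B_0\subseteq B$, where the paper instead asserts $\|Pic|(X_0)=0$ and $\|rk|\cO(X_0)^\times/k^\times=\|rk|X$ outright (Rosenlicht-type facts on orbits of solvable groups).
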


\begin{proof}

  We compute the Picard group in two different ways. Set
  $X=G/H$. First, we have an exact sequence
  \begin{equation}
    \Xi(G)\pfeil\|Pic|^GX\pfeil\|Pic|X.
  \end{equation}
  The left hand group is trivial since $G$ is semisimple. The cokernel
  of the right hand homomorphism is torsion by Sumihiro \cite{Sum}. On
  the other hand $\|Pic|^GX=\Xi(H)$. Thus
  $\|rk|\|Pic|X=\|rk|\Xi(H)$. Now let $X_0=Bx_0\subseteq X$ be the
  open $B$-orbit. Then the colors are the irreducible components of
  $X\setminus X_0$. Thus we have an exact sequence
  \begin{equation}
    k^\times=\cO(X)^\times\pfeil\cO(X_0)^\times
    \pfeil\ZZ^a\pfeil\|Pic|X\pfeil\|Pic|X_0=0
  \end{equation}
  where $a=\#\Delta(X)$. By definition
  $\|rk|\cO(X_0)^\times/k^\times=\|rk|X$. Thus
  $\|rk|\|Pic|X=a-\|rk|X$.
\end{proof}

Given a simple root $\a\in S$, one can construct colors as follows:
let $P_\a\subseteq G$ be the minimal parabolic subgroup corresponding
to $\a$. Then $P_\a x_0$ is an open $B$-stable subset of $X$ which,
according to \cite{BorSub}*{3.2}, decomposes into at most three
$B$-orbits. One of them is the open $B$-orbit $Bx_0$; the others are
of codimension $1$ in $X$, hence their closures are colors. We say
that these colors are \emph{moved by $\a$}. Clearly, this just means
that $P_\a D\ne D$. In particular, every color is moved by some (not
necessarily unique) simple root.

A more precise description is as follows. Let
$H_\a:=(P_\a)_{x_0}=H\cap P_\a$ such that $P_\a x_0=P_\a/H_\a$. Then
the $B$-orbits in $P_\a x_0$ correspond to $H_\a^{\|red|}$-orbits in
$B\backslash P_\a\cong\P^1$. Let $\Hq_\a$ denote the image of
$H_\a^{\|red|}$ in $\|Aut|\P^1\cong\PGL(2)$. Then, up to conjugation,
there are four possibilities for $\Hq_\a$:
\begin{equation}\label{eq:2}
  \begin{array}{lll}

    \text{Type of $\a$}&\Hq_\a&\text{colors}\\
    \noalign{\smallskip\hrule\smallskip}
    \tP/&G_0&\text{---}\\
    \tB/&S_0U_0&D\\
    \tA/&T_0&D,D'\\
    \tAA/&N_0&D

  \end{array}
\end{equation}

Here $G_0=\PGL(2)$. The subgroups $B_0$, $U_0$, and $T_0$ of $G_0$ are
Borel subgroup, a maximal unipotent subgroup, and a maximal torus,
respectively. Moreover, $S_0\subseteq T_0$ and
$N_0=N_{G_0}(T_0)$. Thus, the set of simple roots decomposes as a
disjoint union according to their type:
\begin{equation}
  S=S^\tP/\cup S^\tB/\cup S^\tA/\cup S^\tAA/.
\end{equation}
Observe that $\a\in S^\tP/$ if and only if the open $B$-orbit $Bx_0$
is $P_\a$-invariant. Thus, $S^\tP/$ is the set of simple roots of the
parabolic $P_X$, the stabilizer of the open $B$-orbit.

Let $D$ be a color moved by $\a$. Then the morphism
\begin{equation}
  \phi_{D,\a}:P_\a\times^BD\pfeil X
\end{equation}
is generically finite. Its separable degree is $1$, i.e.,
$\phi_{D,\a}$ is bijective if and only if $\a$ is of type \tB/ or
\tA/. It is $2$ for $\a$ of type \tAA/. The inseparable degree of
$\phi_{D,\a}$ will be denoted by $q_{D,\a}\in p^\NN$.

\begin{example}

  Assume $p>3$ and let $P\subseteq G$ be a subgroup scheme which
  contains ${}^-B$, the Borel subgroup which is opposite to $B$. In
  \cite{Wenz0,Wenz}, Wenzel shows that such subgroup schemes are
  classified by functions $f:S\to\ZZ_{\ge0}\cup\{\infty\}$ where
  $f(\a)$ can be defined as the supremum of the set of all
  $n\in\ZZ_{\ge0}$ such that $P$ contains the $n$-th Frobenius kernel
  of $P_\a$. See also \cite{HaLau} for a simplified account. Now let
  $X=G/P$, a complete homogeneous $G$-variety. Then the following is
  easy to see: A simple root $\a\in S$ is of type \tP/ if and only if
  $f(\a)=\infty$. All other simple roots are of type \tB/ and they all
  move a different divisor $D_\a$. Moreover,
  $q_{D_\a,\a}=p^{f(\a)}$. In particular, this shows that in this
  example the numbers $q_{D,\a}$ may be arbitrary $p$-powers.

\end{example}

To formulate the following permanence property we renormalize
$\delta_D$ as follows:
\begin{equation}
  \delta_D^{(\a)}:=q_{D,\a}\delta_D.
\end{equation}

\begin{lemma}\label{lem:finitemor}

  Let $\pi:X_1\pfeil X_0$ be a finite surjective equivariant morphism
  between spherical $G$-varieties, let $E$ be a color of $X_1$ and let
  $D=\pi(E)$ be its image in $X_0$. Let, moreover, $\a\in S$ be a
  simple root moving $E$ (and $D$). Then
  $\delta_D^{(\a)}=\delta_E^{(\a)}$.

\end{lemma}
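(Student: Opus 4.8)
The plan is to pull everything back along $\pi$ and reduce the asserted equality to a single numerical identity. First I would observe that if $f$ is a $B$-semiinvariant on $X_0$ of weight $\chi_f$, then $\pi^*f$ is a $B$-semiinvariant on $X_1$ of the same weight; hence $\Xi(X_0)\subseteq\Xi(X_1)$ is a finite-index subgroup and restriction identifies $N_\QQ(X_1)$ with $N_\QQ(X_0)$, the common space in which the claimed equality is to hold. Let $e$ be the ramification index of $v_E$ over $v_D$, so that $v_E(\pi^*f)=e\,v_D(f)$ and therefore $\delta_E=e\,\delta_D$ on $\Xi(X_0)$. Since $\delta_D^{(\a)}=q_{D,\a}\delta_D$ while $\delta_E^{(\a)}=q_{E,\a}\delta_E=e\,q_{E,\a}\delta_D$, the lemma becomes the numerical statement $q_{D,\a}=e\,q_{E,\a}$.

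To establish this I would use the commutative square
\[
\cxymatrix{
P_\a\times^B E \ar[r]^-{\phi_{E,\a}}\ar[d]_-\psi & X_1\ar[d]^-\pi\\
P_\a\times^B D \ar[r]_-{\phi_{D,\a}} & X_0
}
\]
in which $\psi$ is induced by $\pi|_E\colon E\pfeil D$ and hence restricts on the generic fibre over $P_\a/B$ to $\pi|_E$; thus $\deg_{\mathrm i}\psi=f_{\mathrm i}$, the inseparable part of $[k(E):k(D)]$. Multiplicativity of the inseparable degree around the square reads $\deg_{\mathrm i}(\pi)\,q_{E,\a}=q_{D,\a}\,f_{\mathrm i}$, using $\deg_{\mathrm i}\phi_{E,\a}=q_{E,\a}$ and $\deg_{\mathrm i}\phi_{D,\a}=q_{D,\a}$. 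Consequently the whole lemma comes down to the local valuation-theoretic identity
\[
\deg_{\mathrm i}(\pi)=e\cdot f_{\mathrm i}
\]
for the color $E$ lying over $D$.

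This last identity is the crux, and it fails for a general finite morphism (tame ramification is a counterexample); here it holds because $\pi$ is equivariant. Writing $k(X_0)\subseteq k(X_0)_{\mathrm s}\subseteq k(X_1)$ for the separable closure, $\pi$ factors $G$-equivariantly as $X_1\pfeil X_{\mathrm s}\pfeil X_0$ with $X_1\pfeil X_{\mathrm s}$ purely inseparable and $X_{\mathrm s}\pfeil X_0$ separable; since $e$, $f_{\mathrm i}$ and $\deg_{\mathrm i}$ are all multiplicative in such a tower, it suffices to treat the two factors. On open orbits $\pi$ is the quotient of $G/H_1$ by the finite group scheme $H_0/H_1$, whose étale part gives the separable factor; as colors meet the open orbit, this factor is étale in codimension one, so there $e=f_{\mathrm i}=\deg_{\mathrm i}=1$. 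The purely inseparable factor is a universal homeomorphism, so $E$ is the unique prime over $D$, the extension of valuations is unique, and the fundamental identity gives $\deg_{\mathrm i}=\deg=e\,f$ with $f_{\mathrm i}=f$; hence $\deg_{\mathrm i}=e\,f_{\mathrm i}$ in this case as well. The main obstacle is exactly this inseparable bookkeeping: one must know that the separable part of an equivariant finite morphism is unramified along colors, and that the inseparable part admits a single extension of valuations for which $\deg_{\mathrm i}=\deg$, so that the field-theoretic inseparable degree factors as $e\cdot f_{\mathrm i}$ and not according to the naive $e\cdot f$ split.
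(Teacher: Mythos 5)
Your argument is correct in substance but follows a genuinely different route from the paper's. The paper argues by cases on the type of $\a$: for types \tB/ and \tA/ it uses the \emph{cartesian} square whose top row is $P_\a\times^B\pi^{-1}(D)\pfeil X_1$, with $\pi^{-1}(D)$ the scheme-theoretic preimage (equal to $qE$ as a divisor); base change forces both horizontal arrows to have the same degree, which yields $q_{D,\a}=q\,q_{E,\a}$ at once, with $q$ equal to your ramification index $e$. In the delicate case where a type-\tAA/ color $D$ of $X_0$ splits into two type-\tA/ colors $E_1,E_2$ of $X_1$, this bookkeeping only gives the sum relation $2\delta_D^{(\a)}=\delta_{E_1}^{(\a)}+\delta_{E_2}^{(\a)}$, and the paper needs a separate symmetry argument: after reducing to $H_1$ connected and reduced (hence normal in $H_0$), the finite group $\Gamma=H_0/H_1$ permutes $E_1,E_2$ transitively. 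Your commutative-but-not-cartesian square with the reduced divisor $E$ trades the divisor multiplicity for the residue degree $f_{\mathrm i}$ and reduces everything to the single identity $\deg_{\mathrm i}(\pi)=e\,f_{\mathrm i}$, proved per color via the separable/inseparable factorization. This buys uniformity: since the identity is established for each color over $D$ individually, the split case needs no extra argument at all. What the paper's route buys is economy of means: pure base change and divisor multiplicities, with no ramification theory (fundamental identity over the divisorial DVR, behavior of residue fields under purely inseparable extensions) and no étaleness statement.

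One step of yours needs repair, though it is fixable. You justify unramifiedness of the separable factor along colors by treating $H_0/H_1$ as a ``finite group scheme'' and invoking its étale part; this presumes $H_1\trianglelefteq H_0$, which is not automatic --- indeed the paper, for its own $\Gamma$-argument, gains normality only after replacing $H_0$, $H_1$ by reduced groups and $H_1$ by its identity component, reductions you have not performed (and which would change $\deg_{\mathrm i}(\pi)$, $e$ and $f_{\mathrm i}$). The conclusion you need is nevertheless true by equivariance alone: over the open orbits, $X_{\mathrm s}\pfeil X_0$ is a finite morphism $G/H_{\mathrm s}\pfeil G/H_0$ whose étale locus is open, nonempty (the function field extension is separable, so it contains the generic point) and $G$-stable; its complement is then a proper closed $G$-stable subset of the homogeneous space $G/H_{\mathrm s}$, hence contains no closed point and is empty. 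So the separable factor is étale over the open orbit, which is exactly what your computation of $e$, $f_{\mathrm i}$ at the generic points of colors uses. With that substitution your proof is complete.
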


\begin{proof}

  We consider first the case that $\a$ is of type \tA/ or \tB/ for
  $X_0$. Then its type for $X_1$ is the same. Moreover, both
  $\phi_{D,\a}$ and $\phi_{E,\a}$ are bijective and, as an equality of
  divisors, $\pi^{-1}(D)=qE$ where $q$ is some $p$-power. Thus,
  $\delta_E=q\delta_D$. Now consider the diagram
  \begin{equation}
    \cxymatrix{
      P_\a\times^B\pi^{-1}(D)\ar[r]\ar[d]&X_1\ar[d]\\
      P_\a\times^BD\ar[r]^(.6){\phi_{D,\a}}&X_0\\}
  \end{equation}
  It is cartesian, hence both horizontal arrows have the same
  (inseparable) degree, namely $q_{D,\a}$. On the other hand, the top
  arrow has degree $q\,q_{E,\a}$. Hence
  \begin{equation}
    \delta^{(\a)}_D=q_{D,\a}\delta_D=qq_{E,\a}\delta_D=q_{E,\a}\delta_E=
    \delta_E^{(\a)}.
  \end{equation}

  Now assume that $\a$ is of type \tAA/ for $X_0$. Then there are two
  cases. If $\pi^{-1}(D)^{\|red|}$ is irreducible then $\a$ is of type
  \tAA/ for $X_0$, as well. Moreover, the degree of both horizontal
  arrows is now $2q_{D,\a}$. From here one argues as above.

  The second case is when $\a$ of type $\tA/$ for $X_0$. Then
  $\pi^{-1}(D)^{\|red|}=E_1\cup E_2$ has two components. As divisors,
  we have $\pi^{-1}(D)=q_1E_1+q_2E_2$. Thus $\delta_{E_1}=q_1\delta_D$
  and $\delta_{E_2}=q_2\delta_D$. Moreover, as above, we get
  \begin{equation}
    2\delta_D^{(\a)}=2q_{D,\a}\delta_D=
    q_1q_{E_1,\a}\delta_D+q_2q_{E_2,\a}\delta_D=
    \delta_{E_1}^{(\a)}+\delta_{E_2}^{(\a)}
  \end{equation}
  Now we claim that actually $\delta_{E_1}^{(\a)}=\delta_{E_2}^{(\a)}$
  which would prove our assertion.

  To prove the claim, we may assume that $X_0=G/H_0$ and $X_1=G/H_1$
  are homogeneous. Moreover, the cases proved above allow to replace
  $H_0$ and $H_1$ by $H_0^{\|red|}$ and $H_1^{\|red|}$,
  respectively. We can even replace $H_1$ be its connected component
  of unity since $E$ cannot split any further (otherwise $D$ would
  split into more than two components). Then $H_1$ is normal in $H_0$
  and $\pi$ is the quotient by the finite group
  $\Gamma:=H_0/H_1$. Since $\Gamma$ acts transitively on the connected
  components of the fibers of $\pi$, there is an element $g\in\Gamma$
  which maps $E_1$ to $E_2$ which proves the claim.
\end{proof}

For any simple root $\a$ let $\a^r\in N_\QQ(X)$ be the restriction of
$\a^\vee$ to $\Xi_\QQ(X)$, i.e.,
\begin{equation}\label{eq:3}
  \a^r(\chi)=\<\chi,\a^\vee\>\quad\hbox{for all $\chi\in\Xi_\QQ(X)$}
\end{equation}

\begin{proposition}\label{prop:ColorRelations}

  Fix a simple root $\a\in S$. Then the following relations hold:
  \begin{equation}\label{eq:5}
    \begin{array}{ll}

      \text{Type of $\a$}\\
      \noalign{\smallskip\hrule\smallskip}
      \tP/&\a^r=0\\
      \tB/&\delta_D^{(\a)}=\a^r\\
      \tA/&\delta_D^{(\a)}+\delta_{D'}^{(\a)}=\a^r\\
      \omit\hfill&\a\in\Xi_p(X), \delta_D^{(\a)}(\a)=\delta_{D'}^{(\a)}(\a)=1\\
      \tAA/&\delta_D^{(\a)}=\frac12\a^r

    \end{array}
  \end{equation}

\end{proposition}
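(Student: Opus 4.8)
The plan is to reduce every relation to the geometry of the projective line $\P^1=B\backslash P_\a$ together with the orbit picture \eqref{eq:2}. Fix $\chi\in\Xi(X)$ and let $f=f_\chi$ be the $B$-semiinvariant of weight $\chi$. Pulling $f$ back along the orbit map $P_\a\auf P_\a x_0=P_\a/H_\a$ produces a rational function on $P_\a$ which is right-$H_\a$-invariant and left-$B$-semiinvariant of weight $\chi$; equivalently it is a rational section $\sigma$ of a $P_\a$-equivariant line bundle $\cL_\chi$ on $\P^1$ of degree $\<\chi,\a^\vee\>=\a^r(\chi)$, and $\sigma$ is invariant under the residual right action of $\Hq_\a$. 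Since $f$ is a nonvanishing unit on the open $B$-orbit, the divisor of $\sigma$ is supported on the complement of the open $\Hq_\a$-orbit in $\P^1$, that is, on the points lying over the colors moved by $\a$. Comparing the transverse coordinate of a color $D$ in $X$ with the fibre coordinate of $P_\a\times^BD$ over $\P^1$, the definition of $q_{D,\a}$ as the inseparable degree of $\phi_{D,\a}$ gives that the order of $\sigma$ at a point lying over $D$ equals $q_{D,\a}v_D(f)=\delta_D^{(\a)}(\chi)$. The degree formula on $\P^1$ then reads $\a^r(\chi)=\sum\delta_D^{(\a)}(\chi)$, the sum running over the points over colors moved by $\a$, counted with multiplicity.

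It remains to insert the orbit combinatorics of \eqref{eq:2} in each case. For $\a\in S^\tP/$ the image $\Hq_\a=G_0$ acts transitively on $\P^1$, so the invariant section $\sigma$ has empty divisor and $\a^r(\chi)=\deg\cL_\chi=0$ for all $\chi$, i.e.\ $\a^r=0$. For type \tB/ the group $S_0U_0$ fixes a single point and is transitive on its complement, so $\sigma$ is concentrated at one point over $D$ and $\a^r=\delta_D^{(\a)}$. For type \tA/ the torus $T_0$ has two fixed points, lying over the two distinct colors $D,D'$, whence $\a^r=\delta_D^{(\a)}+\delta_{D'}^{(\a)}$. For type \tAA/ the normalizer $N_0$ fuses these two fixed points into a single orbit lying over one color $D$, and the nontrivial element of $N_0/T_0$ interchanges them while preserving $\sigma$; hence the two local orders coincide and $\a^r=2\delta_D^{(\a)}$.

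For the two remaining assertions in type \tA/ I would specialize the construction to $\chi=\a$. Here $\cL_\a$ is the tangent bundle of $\P^1$, whose $T_0$-weights at the two fixed points $0$ and $\infty$ are opposite; consequently its essentially unique $T_0$-invariant rational section (the generating vector field) has equal order $1$ at each of them. Reading off the orders as above yields $\delta_D^{(\a)}(\a)=\delta_{D'}^{(\a)}(\a)=1$. Moreover the only obstruction to realizing this invariant section as a genuine $B$-semiinvariant on $X$ is the possible inseparability of the map $H_\a\auf\Hq_\a$; since that obstruction is a $p$-power, a suitable $p$-power multiple of $\a$ lies in $\Xi(X)$, i.e.\ $\a\in\Xi_p(X)$, which is exactly what is needed for $\delta_D^{(\a)}(\a)$ to be defined.

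The step I expect to be the main obstacle is the careful bookkeeping of the inseparable degrees: one must verify that the order of $\sigma$ along the $B$-coset lying over $D$ is governed \emph{precisely} by the transverse inseparable degree $q_{D,\a}$ of $\phi_{D,\a}$, rather than by some other ramification of $P_\a\auf P_\a/H_\a$, and, in type \tA/, that passing from the $\Hq_\a$-invariant section of $\cL_\a$ to an honest $B$-semiinvariant costs exactly one $p$-power, so that the weight lands in $\Xi_p(X)$ and not already in $\Xi(X)$. Once these inseparability points are settled, the remaining content is just the degree formula on $\P^1$ and the orbit table \eqref{eq:2}.
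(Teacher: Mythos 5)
Your argument is sound, and at bottom it is the same computation as the paper's — the degree of $\cL_\chi$ on a $\P^1$ equals the sum of the orders of an invariant rational section at the points lying over the colors moved by $\a$ — but the packaging is genuinely different, in an instructive way. The paper first reduces to $H$ connected and reduced (\autoref{lem:finitemor}), then works on the full flag variety $\cF=B\backslash G$, where semiinvariants become $H$-invariant rational sections $s$ of $\cL_\chi$ with $v_E(s)=v_D(f)$ on the nose, and the inseparability enters through the intersection number $E\cdot F=\deg(E\to P_\a\backslash G)$ with a generic fiber $F\cong\P^1$. You instead stay on $B\backslash P_\a\cong\P^1$ and pull back along the orbit map $\mu:P_\a\to P_\a x_0$, so the inseparability enters as the multiplicity $e_z$ of the coset $E_z$ over $z$ in the flat pullback $\mu^*D$, i.e. $v_z(\sigma)=e_z\,v_D(f)$. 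What this buys: no reduction step at all, because a non-reduced stabilizer $H_\a=(P_\a)_{x_0}$ — which occurs even for reduced $H$, as in the paper's $\SL(2)\times\SL(2)$ example — is exactly what produces $e_z>1$. What it costs is the step you flag, $e_z=q_{D,\a}$; it is true, and it closes by the same duality that the paper leaves implicit when asserting $E\cdot F=q_{D,\a}$. Namely, identify $P_\a\times^BD$ with the subscheme $\{(pB,x): p^{-1}x\in D\}$ of $P_\a/B\times X$; the scheme-theoretic fiber of $\phi_{D,\a}$ over $x_0$ is then $\Phi^{-1}(D)/B$, where $\Phi(p)=p^{-1}x_0$. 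Since $\Phi=\mu\circ(p\mapsto p^{-1})$ and both $P_\a\auf P_\a/B$ and $P_\a\auf B\backslash P_\a$ are smooth, this fiber is isomorphic to the subscheme $\sum_z e_z\,z$ of $\P^1$. Hence $\deg\phi_{D,\a}=\sum_z e_z$, the number of points over $D$ is the separable degree, the $e_z$ over a fixed color coincide by $H_\a^{\|red|}$-equivariance, and therefore each $e_z$ equals the inseparable degree $q_{D,\a}$.

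Two repairs in type \tA/. First, you may not specialize to $\chi=\a$: a priori only $q_0\a\in\Xi(X)$ for some $p$-power $q_0$ (the paper produces it by pulling back a semiinvariant along $P_\a x_0\auf\PGL(2)/\tilde H$, which is also the precise form of your ``inseparability of $H_\a\auf\Hq_\a$'' remark); so run your tangent-bundle argument on $\cL_{q_0\a}$, whose essentially unique $T_0$-invariant rational section $(x\partial_x)^{\otimes q_0}$ vanishes to order $q_0$ at both fixed points, and divide by $q_0$. Second, $\sigma$ is invariant under $H_\a$ (not under $\Hq_\a$) for the natural linearization, so what is automatic is only that $\|div|(\sigma)$ is $\Hq_\a$-stable; the equal-order claims in types \tA/ and \tAA/ require the linearization to factor through $\PGL(2)$, which holds — as both you and the paper note — because $\cL_\a$ is canonically the relative tangent bundle.
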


\begin{proof}

  Let $X=G/H$ be homogeneous and put $X_1=G/H^{0,\|red|}$. We claim
  that it suffices to prove the assertions for $X_1$. Indeed, if $\a$
  is of type \tP/ for $X$ then the same holds for $X_1$ and the claim
  follows from $\Xi_\QQ(X)=\Xi_\QQ(X_1)$. If $\a$ is not of type \tP/
  for $X$ and $X_1$ then the claim follows immediately from
  \autoref{lem:finitemor} if $\a$ has the same type for $X$ and
  $X_1$. Otherwise, $\a$ is of type \tAA/ for $X$ (moving one color
  $D$) and of type \tA/ for $X_1$ (moving two colors $E_1$,
  $E_2$). But then \autoref{lem:finitemor} implies
  \begin{equation}
    \delta_D^{(\a)}={\textstyle\frac12}(\delta_{E_1}^{(\a)}+\delta_{E_2}^{(\a)})
    ={\textstyle\frac12}\a^r
  \end{equation}
  proving the claim.

  Thus we may assume that $H$ is connected and reduced. Then consider
  the following diagram
  \begin{equation}\label{eq:6}
    \qquad\cxymatrix{&G\ar@{>>}[dl]_{p_1}\ar@{>>}[dr]^{p_2}&\\
      \llap{$X=$ }G/H&&B\backslash G\rlap{ $=:\cF$}\\}
  \end{equation}
  Both morphisms $p_1$ and $p_2$ are smooth with connected
  fibers. Therefore, an irreducible $B$-stable divisors $D\subset X$
  corresponds to an irreducible $H$-stable divisor $E\subset
  \cF$. Moreover, any $B$-semiinvariant rational function $f$ on $X$
  corresponds to an $H$-invariant rational section $s$ of the
  homogeneous line bundle $\cL_\chi$ (with $\chi=\chi_f$) on
  $\cF$. Furthermore, $(D,f)$ is related to $(E,s)$ by
  \begin{equation}\label{eq:7}
    v_E(s)=v_D(f)=\delta_D(\chi).
  \end{equation}

  Now consider the $\P^1$-fibration $\pi:\cF=B\backslash G\pfeil
  \cF_\a:=P_\a\backslash G$. Moreover, let $y\in\cF$ be in the open
  $H$-orbit and let $F\cong\P^1$ be the fiber through $y$.

  Assume first that $\a$ is of type \tP/. Then the open $B$-orbit in
  $X$ is $P_\a$-stable which translates into the open $H$-orbit in
  $\cF$ being the preimage of an open set in $P_\a\backslash G$. But
  then $\cL_\chi$ is a pull-back from $P_\a\backslash G$ which implies
  $\<\chi_f,\a^\vee\>=0$.

  Now assume that $\a\in S^\tB/$. Then $E$ is the only $H$-invariant
  divisor mapping dominantly onto $\cF_\a$. Moreover, since $E\cap F$
  consists of a single point, the map $E\pfeil \cF_\a$ is generically
  bijective, hence purely inseparable. Its degree is $q_{D,\a}$. Thus
  we get
  \begin{equation}\label{eq:8}
    \<\chi,\a^\vee\>=\|deg|\cL_\chi|_F=(s)\cdot F=v_E(s)E\cdot
    F=q_{D,\a}\delta_D(\chi)=\delta_D^{(\a)}(\chi)
  \end{equation}
  proving the assertion.

  If $\a\in S^\tA/$ then there are two divisors $E$, $E'$ mapping
  generically injectively to $\cF_\a$ with degree $q_{D,\a}$ and
  $q_{D',\a}$, respectively. Then
  \begin{equation}\label{eq:9}
    \<\chi,\a^\vee\>=(s)\cdot F=(v_E(s)E+v_{E'}(s)E')\cdot
    F=q_{D,\a}\delta_D(\chi)+q_{D',\a}\delta_{D'}(\chi).
  \end{equation}

  Now we prove $\a\in\Xi_p(X)$. By construction, there is an
  equivariant morphism $P_\a x_0\auf \PGL(2)/\tilde H$ with $\tilde
  H^{\|red|}=T_0$. Thus the pull-back of any non-constant
  $B_0$-semi\-invariant is a $B$-semiinvariant with character $q_0\a$
  for some $p$-power $q_0$.  The $H_\a$-linearization of
  $\cL_{q_0\a}|_F$ factors through a $\PGL(2)$-linearization. One
  reason is, e.g., that $\cL_{-\a}$ is the relative canonical bundle
  of the fibration $\pi$. This implies that $s|_F$ has two zeroes of
  the same multiplicity on $F$. Hence
  $\delta_D^{(\a)}(\a)=\delta_{D'}^{(\a)}(\a)$ and therefore both are
  equal to $1$.

  Finally, assume that $\a\in S^\tAA/$. Then there is one divisor $E$
  mapping generically $2:1$ to $\cF_\a$.The degree of inseparability
  of this map is $q_{D,\a}$. Then $E\cdot F=2q_{D,\a}$ and therefore
  \begin{equation}\label{eq:10}
    \<\chi,\a^\vee\>=(s)\cdot F=v_E(s)E\cdot
    F=2q_{D,\a}\delta_D(\chi)=2\delta_D^{(\a)}(\chi),
  \end{equation}
  as claimed.
\end{proof}

We note the following consequence:

\begin{corollary}\label{cor:Parity}

  Let $p\ne2$ and let $\a\in S$ be of type \tAA/. Then
  $\a\not\in\Xi_p(X)$ and $\<\chi,\a^\vee\>$ is even for all
  $\chi\in\Xi(X)$.

\end{corollary}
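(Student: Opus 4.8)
The plan is to prove the two assertions separately: the parity statement is an immediate consequence of \autoref{prop:ColorRelations}, whereas the non-membership $\a\notin\Xi_p(X)$ requires a sign computation on a finite cover and is where the hypothesis $p\ne2$ really enters.

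First I would settle the parity claim. By the type-\tAA/ row of \autoref{prop:ColorRelations} we have $\delta_D^{(\a)}=\frac12\a^r$, that is, $\<\chi,\a^\vee\>=2q_{D,\a}\,\delta_D(\chi)$ for every $\chi\in\Xi(X)$. Writing $f$ for the $B$-semiinvariant of weight $\chi$, the number $\delta_D(\chi)=v_D(f)$ is an integer and $q_{D,\a}\in p^\NN\subseteq\ZZ$, so the right-hand side lies in $2\ZZ$. Hence $\<\chi,\a^\vee\>$ is even for every $\chi\in\Xi(X)$; note that this half uses nothing about $p$.

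For the non-membership I would pass to a cover. As in the proof of \autoref{prop:ColorRelations}, replace $X$ by $X_1=G/H^{0,\|red|}$; since $\a$ is of type \tAA/ for $X$ it is of type \tA/ for $X_1$, moving two colors $E_1,E_2$ over $D$, and by the argument of \autoref{lem:finitemor} the deck group of $X_1\pfeil X$ contains an involution $\gamma$ interchanging $E_1$ and $E_2$. Because $\gamma$ acts on $X_1$ by right translation it commutes with the $G$-action and so preserves the weight of every $B$-semiinvariant; since $X_1$ is spherical, the semiinvariant $f_\chi$ of a given weight $\chi\in\Xi(X_1)$ is unique up to scalar, whence $\gamma^*f_\chi=\e(\chi)f_\chi$ with $\e(\chi)\in\{\pm1\}$. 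The resulting homomorphism $\e\colon\Xi(X_1)\pfeil\{\pm1\}$ satisfies $\Xi(X)\subseteq\ker\e$, because any weight in $\Xi(X)$ has a $\Gamma$-invariant, in particular $\gamma$-invariant, semiinvariant. Moreover $\a\in\Xi_p(X_1)$ by the type-\tA/ case of \autoref{prop:ColorRelations}, so there is a smallest $p$-power $q_0$ with $q_0\a\in\Xi(X_1)$.

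The crux is to show $\e(q_0\a)=-1$. Here I would use the local model $P_\a x_1\auf\PGL(2)/T_0$ supplied by the proof of \autoref{prop:ColorRelations}: identifying $\PGL(2)/T_0$ with the ordered pairs of distinct points of $\P^1$ in affine coordinates $t_1,t_2$ (with $\infty$ the $B_0$-fixed point), the basic $B_0$-semiinvariant is proportional to $t_1-t_2$, and $\gamma$ induces the Weyl involution $t_1\leftrightarrow t_2$; hence the semiinvariant of weight $q_0\a$ is proportional to $(t_1-t_2)^{q_0}$, on which $\gamma$ acts by $(-1)^{q_0}=-1$, the $p$-power $q_0$ being odd as $p\ne2$. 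Every $p$-power multiple of $\a$ in $\Xi(X_1)$ is then of the form $qq_0\a$ with $q$ a $p$-power, and $\e(qq_0\a)=\e(q_0\a)^{q}=(-1)^{q}=-1$ since $q$ is again odd; so no $p$-power multiple of $\a$ lies in $\ker\e$, and a fortiori none lies in $\Xi(X)\subseteq\ker\e$, which is precisely $\a\notin\Xi_p(X)$. I expect the genuine difficulty to be this crux step: confirming that the covering involution $\gamma$ does induce the Weyl involution on the local $\PGL(2)/T_0$-model, and that the inseparable (Frobenius) part of the cover leaves the sign $(-1)^{q_0}$ intact, so that $\e(q_0\a)=-1$ really holds.
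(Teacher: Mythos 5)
Your first half (evenness of $\<\chi,\a^\vee\>$) is correct and is exactly the paper's argument: it is immediate from the type-\tAA/ relation $\delta_D^{(\a)}=\frac12\a^r$ of \autoref{prop:ColorRelations}. The gap is in the second half, at the very first step of your reduction: it is \emph{not} true that a root of type \tAA/ for $X$ becomes of type \tA/ for $X_1=G/H^{0,\|red|}$. The type is governed by $\Hq_\a$, the image of $H_\a=H\cap P_\a$ in $\|Aut|\P^1$, and this image can be the disconnected group $N_0$ even when $H$ itself is connected and reduced, because $H\cap P_\a$ need not be connected. The proof of \autoref{lem:finitemor} is explicit about this possibility: for a finite cover, $\pi^{-1}(D)^{\|red|}$ may stay irreducible, in which case type \tAA/ persists upstairs. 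A concrete instance is $X=\SL(3)/\SO(3)$ with $p\ne2$: here $H=\SO(3)$ is smooth and connected, so $X_1=X$, the deck group $\Gamma$ is trivial, and there is no $\gamma$ at all; nevertheless both simple roots are of type \tAA/ (equivalently $2\a_i\in\Sigma(X)$, cf.\ \autoref{prop:RootColors}), and the corollary has real content for this $X$. Nor can one repair this by passing to a deeper cover: since $\SO(3)$ is connected, it has no proper closed subgroup of finite index, so the double cover with color-swapping deck involution that your argument needs simply does not exist. (A smaller point: even when your reduction does apply, the element $\gamma$ produced in \autoref{lem:finitemor} is only known to map $E_1$ to $E_2$, not to be an involution, so your character $\e$ a priori takes values in roots of unity rather than in $\{\pm1\}$; this is fixable, but secondary.)

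The repair — and this is what the paper does — is to realize that the sign you are after lives not in the deck group of a cover of $X$ but in $\Hq_\a=N_0=\<s_0\>T_0$ itself, which is available whether or not $H$ is connected. If $n\a\in\Xi(X)$, the corresponding $B$-semiinvariant $f$ is an $H$-invariant rational section $s$ of $\cL_{n\a}$ on $\cF=B\backslash G$, and its restriction to the fiber $F\cong\P^1$ through a point of the open $H$-orbit must be invariant under all of $N_0$. Being $T_0$-invariant, $s|_F$ is (up to scalar) the unique $T_0$-invariant section, and $s_0$ acts on it by $(-1)^n$ — your computation with $t_1-t_2$ is exactly this, transplanted to the correct place. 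Invariance under $N_0$ therefore forces $n$ to be even; since every $p$-power is odd when $p\ne2$, no $p$-power multiple of $\a$ lies in $\Xi(X)$, i.e.\ $\a\notin\Xi_p(X)$. So your local $\PGL(2)$ sign computation is the right idea, but it must be run against the component group of $\Hq_\a$ acting on the fiber $F$, not against a covering group of $X$.
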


\begin{proof}

  We keep the notation of the proof of
  \autoref{prop:ColorRelations}. Let $N_0=\<s_0\>T_0$ and let
  $n\in\ZZ$ with $n\a\in\Xi(X)$. Then $s_0$ acts on the
  $T_0$-invariant section of $\cL_{q_0\a}|_F$ by multiplication with
  $(-1)^n$. Hence $n$ is even and $\a\not\in\Xi_p(X)$. The rest
  follows directly from \autoref{prop:ColorRelations}.
\end{proof}

Now we analyze the case where a color is moved by more than one simple
root.

\begin{lemma}\label{lem:Multiple}

  Let $D$ be a color which is moved by two distinct simple roots
  $\a_1$ and $\a_2$. Then either $\a_1,\a_2\in S^\tB/$ or
  $\a_1,\a_2\in S^\tA/$. In the latter case let $D'$ and $D''$ be the
  second color moved by $\a_1$ and $\a_2$, respectively. Then $D'\ne
  D''$.

\end{lemma}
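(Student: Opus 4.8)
My plan is to extract from \autoref{prop:ColorRelations} two complementary facts about a simple root $\a$ moving $D$, and then to cross-evaluate them, the whole argument being driven by the elementary sign fact $\<\a,\b^\vee\>\le0$ for distinct simple roots $\a\ne\b$. I would call $\a$ \emph{evaluable} if it is of type \tA/ or \tAA/: then a positive multiple of $\a$ lies in $\Xi(X)$, so $\a\in\Xi_\QQ(X)$, and the self-evaluation is positive, $\delta_D(\a)=1/q_{D,\a}>0$. For type \tA/ this is the normalization $\delta_D^{(\a)}(\a)=1$; for type \tAA/ it comes from $\delta_D^{(\a)}=\tfrac12\a^r$ together with $\a^r(\a)=\<\a,\a^\vee\>=2$. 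I would call $\a$ \emph{proportional} if it is of type \tB/ or \tAA/: then $\delta_D=c\,\a^r$ in $N_\QQ(X)$ with $c>0$ (namely $c=1/q_{D,\a}$, resp. $c=1/(2q_{D,\a})$). Thus type \tA/ is evaluable only, type \tB/ proportional only, and type \tAA/ both.

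Now suppose $D$ is moved by distinct $\a_1,\a_2$ whose types are neither both \tB/ nor both \tA/. Running through the four remaining unordered type-pairs $\{\tA/,\tB/\}$, $\{\tA/,\tAA/\}$, $\{\tB/,\tAA/\}$, $\{\tAA/,\tAA/\}$, I would check that in each case one of the two roots is evaluable and the other, a genuinely different root, is proportional: a type \tAA/ root plays whichever role is needed, type \tA/ always furnishes the evaluable root, and type \tB/ the proportional one. Relabelling them $\a$ (evaluable) and $\b$ (proportional), so $\a\ne\b$, I then evaluate $\delta_D=c\,\b^r$ at $\a$ to get $0<\delta_D(\a)=c\,\b^r(\a)=c\,\<\a,\b^\vee\>\le0$, a contradiction. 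Hence the two types agree, being both \tB/ or both \tA/.

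There remains the case $\a_1,\a_2\in S^\tA/$, where I must show $D'\ne D''$. Suppose to the contrary $D'=D''=:E$, so that $\a_1$ and $\a_2$ move the same pair $\{D,E\}$. Since $\a_2$ is of type \tA/ and moves both $D$ and $E$, \autoref{prop:ColorRelations} gives $\delta_D(\a_2),\delta_E(\a_2)>0$ (and $\a_2\in\Xi_\QQ(X)$). Evaluating the type-\tA/ relation $\delta_D^{(\a_1)}+\delta_E^{(\a_1)}=\a_1^r$ at $\a_2$ yields $q_{D,\a_1}\delta_D(\a_2)+q_{E,\a_1}\delta_E(\a_2)=\<\a_2,\a_1^\vee\>$, whose left-hand side is strictly positive and whose right-hand side is $\le0$. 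This contradiction shows $D'\ne D''$.

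The only step demanding care, and the one I expect to be the main obstacle, is the positive self-evaluation for type \tAA/: \autoref{prop:ColorRelations} records the evaluation $\delta_D^{(\a)}(\a)=1$ only for type \tA/, so for type \tAA/ I must first know $\a\in\Xi_\QQ(X)$, i.e. that $2\a\in\Xi_p(X)$. I would obtain this exactly as in the type-\tA/ part of the proof of \autoref{prop:ColorRelations}: the morphism $P_\a x_0\auf\PGL(2)/\tilde H$ with $\tilde H^{\|red|}=N_0$ pulls back a non-constant $B$-semiinvariant from $\PGL(2)/N_0$ whose weight is a $p$-power multiple of $2\a$. Granting this, each case above reduces to a single evaluation, and the inequality $\<\a,\b^\vee\>\le0$ supplies all the real content.
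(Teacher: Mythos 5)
Your proof is correct, and for the first half of the statement it takes a genuinely different route from the paper. Where you agree: your exclusion of $D'=D''$ is exactly the paper's computation \eqref{eq:13} with the roles of $\a_1$ and $\a_2$ interchanged, and your treatment of the pairs $\{\tA/,\tAA/\}$ and $\{\tAA/,\tAA/\}$ reproduces the paper's displays \eqref{eq:12} and \eqref{eq:11}. Where you differ: the paper does \emph{not} handle the pairs $\{\tA/,\tB/\}$ and $\{\tB/,\tAA/\}$ by sign computations at all; it instead quotes the rank theory of $B$-orbits from \cite{BorSub} --- a color moved by a type-\tB/ root has rank equal to $\|rank|X$, while one moved by a type-\tA/ or type-\tAA/ root has rank $\|rank|X-1$ --- so that type \tB/ can never mix with types \tA/ or \tAA/, and only then runs sign arguments on the two remaining bad pairs. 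Your evaluable/proportional dichotomy eliminates this external input and disposes of all four bad pairs uniformly, using nothing beyond \autoref{prop:ColorRelations} and $\<\a,\b^\vee\>\le0$ for distinct simple roots; this is more elementary and self-contained. The price is that your argument for $\{\tB/,\tAA/\}$ needs $\a\in\Xi_\QQ(X)$ for the type-\tAA/ root, which the rank argument does not; you correctly isolated this as the one delicate point, and your justification (pulling back a non-constant semiinvariant through $P_\a x_0\auf\PGL(2)/\tilde H$ with $\tilde H^{\|red|}=N_0$, as in the proof of \autoref{prop:ColorRelations}) is sound --- indeed the paper itself relies on this membership without comment when it writes ``since $\a_1\in\Xi_\QQ(X)$'' in \eqref{eq:11}, so your version makes explicit a point the published proof glosses over.
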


\begin{proof}

  Clearly, neither $\a_1$ nor $\a_2$ is of type \tP/. Recall from
  \cite{BorSub}*{\S2}, that any $B$-orbit on $X$ has a rank attached
  to it. Moreover, if $\a\in S$ moves the color $D$ then
  $\|rank|D=\|rank|X$ if $\a$ is of type \tB/ and
  $\|rank|D=\|rank|X-1$ in case $\a$ is of type \tA/ or \tAA/
  (\cite{BorSub}*{\S2 and Lemma 3.2}). This entails that $\a_1$,
  $\a_2$ are either both of type \tB/ or both of type \tA/ or \tAA/.

  Suppose they are both of type \tAA/. Then, since
  $\a_1\in\Xi_\QQ(X)$,
  \begin{equation}\label{eq:11}
    0<q_{D,\a_1}^{-1}=q_{D,\a_1}^{-1}\,\textstyle{\frac12}\a_1^r(\a_1)=
    \delta_D(\a_1)=
    q_{D,\a_2}^{-1}\,\textstyle{\frac12}\a_2^r(\a_1)\le0.
  \end{equation}
  Similarly, suppose $\a_1$ is of type \tA/ and $\a_2$ is of type
  \tAA/. Then
  \begin{equation}\label{eq:12}
    0<q_{D,\a_1}^{-1}=\delta_D(\a_1)=
    q_{D,\a_2}^{-1}\textstyle{\frac12}\a_2^r(\a_1)\le0.
  \end{equation}
  This finishes the proof of the first part.

  Now let both $\a_1$ and $\a_2$ be of type \tA/ and suppose
  $D'=D''$. Then
  \begin{equation}\label{eq:13}
    0<\frac{q_{D,\a_2}}{q_{D,\a_1}}+\frac{q_{D',\a_2}}{q_{D',\a_1}}=
    q_{D,\a_2}\delta_D(\a_1)+q_{D',\a_2}\delta_{D'}(\a_1)=
    \a_2^r(\a_1)\le0.
  \end{equation}
\end{proof}

\begin{examples} 1. Let $G=\SL(2)\times\SL(2)$ and $H=\SL(2)$ embedded
  into $G$ via $\|id|\times F_q$ where $F_q$ is a Frobenius
  morphism. Then $X:=G/H$ has only one color $D$. Moreover both simple
  roots $\a_1$, $\a_2$ are of type \tB/ and $D$ is moved by both of
  them. Furthermore, $q_{D,\a_1}=1$ and $q_{D,\a_2}=q$ which shows
  that $q_{D,\a}$ may depend on $\a$.

  2. Let $G=\SL(3)$, let $q$ be a $p$-power, and let $H$ be the
  subgroup consisting of the matrices
  \begin{equation}
    \begin{pmatrix} t^{q+2}&&\\&t^{q-1}&\\&&t^{-2q-1}\end{pmatrix}
    \cdot
    \begin{pmatrix}1&x&y\\&1&x^q\\&&1\end{pmatrix}
    \quad\text{with $t\in\Gm$ and $x,y\in{\bf G}_a$.}
  \end{equation}
  Then both simple roots are of type \tA/ and there are three colors
  $D_0,D_1,D_2$ where $\a_i$ moves $D_0$ and $D_i$. Furthermore,
  $q_{D_1,\a_1}=q_{D_0,\a_2}=q_{D_2,\a_2}=1$ while
  $q_{D_0,\a_1}=q$. The values $\delta_D(\a_i)$ are given by the
  following table:
  \begin{equation}
    \setlength{\extrarowheight}{5pt}
    \begin{array}{l|ccc}

      &\delta_{D_0}&\delta_{D_1}&\delta_{D_2}\\
      \hline
      \a_1&q^{-1}&1&-1-q^{-1}\\
      \a_2&1&-q-1&1

    \end{array}
  \end{equation}
  So indeed $q\delta_{D_0}+\delta_{D_1}=\a_1^r$ and
  $\delta_{D_0}+\delta_{D_2}=\a_2^r$.

  Both examples show that $G$ contains for $p\ge2$ infinitely many
  conjugacy classes of self-normalizing spherical subgroups, a
  phenomenon which does not occur in characteristic zero.

\end{examples}

\begin{remark}
  The Lemma shows in particular, that one can assign unambiguously a
  type to any color. Thereby, one gets a decomposition
  \begin{equation}
    \Delta(X)=\Delta^\tB/(X)\cup\Delta^\tA/(X)\cup\Delta^\tAA/(X).
  \end{equation}
\end{remark}

\section{Spherical roots}
\label{sec:Roots}

For a spherical variety $X$, let $\cV(X)$ be the set of $G$-invariant
$\QQ$-valued valuations of the field $k(X)$. The map
\begin{equation}\label{eq:14}
  \cV(X)\pfeil N_\QQ(X)=\|Hom|(\Xi(X),\QQ):
  v\mapsto\big(\chi_f\mapsto v(f)\big)
\end{equation}
is injective (\cite{Hyd}*{1.8}). According to \cite{Hyd}*{5.3}, it
identifies $\cV(X)$ with a a finitely generated convex rational cone
inside $N_\QQ(X)$ which contains the image of the antidominant Weyl
chamber under the projection $\|Hom|(\Xi(T),\QQ)\auf N_\QQ(X)$. This
can be phrased in terms of the dual cone $\cV(X)^\vee$ of $\cV(X)$: it
is a finitely generated rational convex cone in $\Xi_\QQ(X)$ with
$\cV(X)=(\cV(X)^\vee)^\vee$ and $-\cV(X)^\vee\subseteq\QQ_{\ge0}S$
where $\QQ_{\ge0}S$ is the cone generated by the simple roots of
$G$. In particular, $-\cV(X)^\vee$ is a pointed cone. Thus, it has a
canonical set of generators:

\begin{definition}

  An element $\sigma\in\Xi_\QQ(X)$ is called a \emph{spherical root of
    $X$} if

  \begin{itemize}

  \item $\QQ_{\ge0}\sigma$ is an extremal ray of $-\cV(X)^\vee$ (thus
    $\sigma\in\QQ S$) and

  \item $\sigma$ it is a primitive element of $\ZZ S\cap\Xi_p(X)$.

  \end{itemize}

  The set of spherical roots is denoted by $\Sigma(X)$.

\end{definition}

Clearly, each extremal ray of $-\cV(X)^\vee$ contains a unique
spherical root. Moreover, the spherical roots determine the valuation
cone via
\begin{equation}\label{eq:15}
  \cV(X)=\{v\in N_\QQ(X)\mid \sigma(v)\le0
  \hbox{ for all $\sigma\in\Sigma(X)$}\},
\end{equation}
and are in bijection with faces of codimension $1$ of $\cV(X)$.

The normalization for a spherical root is chosen such that the
following statement holds:

\begin{lemma}

  Let $\phi: X_1\pfeil X_2$ be a morphism of spherical varieties which
  is either purely inseparable or a quotient by a central subgroup
  scheme of $G$. Then $\Sigma(X_1)=\Sigma(X_2)$.

\end{lemma}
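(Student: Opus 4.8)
The plan is to handle the two kinds of morphism separately, in each case first reducing to the open $G$-orbits, so that we may assume $X_1=G/H_1$ and $X_2=G/H_2$ are homogeneous (everything in sight — $\Xi$, $N_\QQ$, and $\cV$ — depends only on the function field together with its $B$-action, hence only on the open $G$-orbit). In both cases I will establish two facts. First, that under the natural map the valuation cones correspond, so that the extremal rays of $-\cV^\vee$ match. Second, that the lattice $\ZZ S\cap\Xi_p$, in which a spherical root is required to be primitive, is the same for $X_1$ and $X_2$. Since by definition a spherical root is the primitive generator of an extremal ray of $-\cV^\vee$ inside this lattice, these two facts together force $\Sigma(X_1)=\Sigma(X_2)$.

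For the purely inseparable case, pullback gives $\Xi(X_2)\subseteq\Xi(X_1)$, while if $g$ is a $B$-semiinvariant on $X_1$ with character $\chi$ then $g^{p^n}\in k(X_2)$ for suitable $n$ and has character $p^n\chi$; hence $\Xi(X_1)\subseteq\frac1{p^n}\Xi(X_2)$. After tensoring with $\ZZ[\tfrac1p]$ this squeezes $\Xi_p(X_1)=\Xi_p(X_2)$, so that $N_\QQ(X_1)=N_\QQ(X_2)$ and the lattices $\ZZ S\cap\Xi_p(X_i)$ coincide outright. Moreover a purely inseparable extension of function fields admits a unique prolongation of each valuation from below, and this prolongation is automatically $G$-invariant when the valuation it extends is; thus $v\mapsto v$ identifies $\cV(X_1)$ with $\cV(X_2)$ under $N_\QQ(X_1)=N_\QQ(X_2)$. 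Both inputs are in place, so this case is immediate.

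The central quotient $X_2=X_1/Z$ is the genuinely delicate one, since the naive hope $\Xi_p(X_1)=\Xi_p(X_2)$ already fails for $G=\SL(2)$, $X_1=G/U$ with $U$ maximal unipotent and $Z$ the center, whenever $p\ne2$. What rescues the statement is that $Z\subseteq Z(G)=\bigcap_\alpha\ker\alpha$, so every element of the root lattice restricts trivially to $Z$, i.e.\ $\ZZ S\subseteq Z^\perp$ with $Z^\perp=\{\chi\in\Xi(T):\chi|_Z=1\}$. Using $Z\subseteq T$ one checks $\Xi(X_2)=\Xi(X_1)\cap Z^\perp$. As $\ZZ[\tfrac1p]$ is flat over $\ZZ$, tensoring commutes with this finite intersection, giving $\Xi_p(X_2)=\Xi_p(X_1)\cap(Z^\perp\otimes\ZZ[\tfrac1p])$; intersecting with $\ZZ S$ and invoking $\ZZ S\subseteq Z^\perp$ makes the extra condition vacuous, so that
\begin{equation}
  \ZZ S\cap\Xi_p(X_2)=\ZZ S\cap\Xi_p(X_1).
\end{equation}
Hence the two lattices agree even though $\Xi_p$ itself need not. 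For the cones, the projection $N_\QQ(X_1)\auf N_\QQ(X_2)$ dual to $\Xi(X_2)\into\Xi(X_1)$ carries $\cV(X_1)$ onto $\cV(X_2)$, its kernel (the central-torus directions, present only when $Z$ is positive-dimensional) lying in the lineality space of $\cV(X_1)$; dually $-\cV(X_1)^\vee$ is the isomorphic image of $-\cV(X_2)^\vee$ under $\Xi_\QQ(X_2)\into\Xi_\QQ(X_1)$, so extremal rays correspond. Since any spherical root lies in $\QQ S\subseteq Z^\perp\otimes\QQ$ and therefore already in $\Xi_\QQ(X_2)$, the matching of rays together with the equality of lattices yields $\Sigma(X_1)=\Sigma(X_2)$.

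I expect the main obstacle to be precisely this central case: one must resist concluding from $\Xi_\QQ(X_1)=\Xi_\QQ(X_2)$ — or from a false $\Xi_p$-equality — and instead locate the whole argument inside the sublattice $\ZZ S\cap\Xi_p$, where the triviality of roots on the center does the work. The residual care is bookkeeping: verifying $\Xi(X_2)=\Xi(X_1)\cap Z^\perp$ for a possibly non-reduced diagonalizable $Z$, and confirming that the central directions contribute only to the lineality of $\cV$ and to no extremal ray.
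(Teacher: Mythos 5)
Your proposal is correct and follows essentially the same route as the paper's proof: treat the two cases separately, show that the relevant lattice is unchanged (for the central quotient, $\ZZ S\cap\Xi_p(X_1)=\ZZ S\cap\Xi_p(X_2)$ because roots vanish on the center), and identify the valuation cones — equivalently their dual cones and hence their extremal rays — under the induced map on $N_\QQ$. The only difference is one of detail: you spell out facts the paper treats as clear or settles by citation (unique prolongation of valuations along purely inseparable extensions, the identity $\Xi(X_2)=\Xi(X_1)\cap Z^\perp$ and its flat base change to $\ZZ_p$, and the preimage description of $\cV(X_1)$, for which the paper invokes \cite{Hyd}*{Thm.~6.1}).
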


\begin{proof}

  If $\phi$ is purely inseparable then clearly $\cV(X_1)=\cV(X_2)$ and
  $\Xi_p(X_1)=\Xi_p(X_2)$. Hence $\Sigma(X_1)=\Sigma(X_2)$.

  Now let $\phi$ be the quotient by $A\subseteq Z(G)$ (which might be
  positive dimensional). Then
  \begin{equation}
    \Xi_p(X_2)=\{\chi\in\Xi_p(X_1)\mid\chi|_A=1\}.
  \end{equation}
  Since roots of $G$ are trivial on $A$, this implies
  \begin{equation}
    \ZZ S\cap\Xi_p(X_2)=\ZZ S\cap\Xi_p(X_1).
  \end{equation}
  Moreover, $N_\QQ(X_2)$ is a quotient of $N_\QQ(X_1)$ and $\cV(X_1)$
  is the preimage of $\cV(X_2)$ (follows, e.g., from
  \cite{Hyd}*{Thm.~6.1}). This implies $\Sigma(X_2)=\Sigma(X_1)$.
\end{proof}

\section{Localization at $S$}
\label{sec:BigCell}

There are two types of constructions, called localization at $S$ and
at $\Sigma$, respectively, which both allow to reduce a spherical
variety to a simpler one. In this section we describe localization at
$S$ which, in characteristic zero, was first introduced by Luna in
\cite{LuGC}. To this end, we first recall and prove some properties of
the Bia\l ynicki-Birula decomposition, \cite{BiaBir}, of a
$\Gm$-variety.

Let $X$ be a complete normal $\Gm$-variety. Then for any $x\in X$, the
limit
\begin{equation}
  \pi(x):=\lim_{t\pfeil0}t\cdot x\in X
\end{equation}
exists and is a $\Gm$-fixed point. Thus, letting $F$ to be the set of
connected components of the fixed point set $X^\Gm$ we get a
set-partition of $X$ by putting
\begin{equation}\label{eq:16}
  X_Z:=\{x\in X\mid \pi(x)\in Z\}.
\end{equation}
These are the Bia\l ynicki-Birula cells which are indexed by
$F$. Except when $X$ is smooth or projective they are, in general, not
very well behaved. One cell is always good, though:

\begin{proposition}\label{prop:Source}

  Let $X$ be a complete normal $\Gm$-variety. Then there is a unique
  connected component $S$ of $X^\Gm$ (the {\rm source of $X$}) such
  that $X_S$ is open. Moreover, the map $\pi_S:=\pi_S:X_S\pfeil S$ is
  affine and a categorical quotient by $\Gm$. In particular, the
  source $S$ is irreducible and normal.

\end{proposition}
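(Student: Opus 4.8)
The plan is to reduce everything to the local structure of $\Gm$-actions on affine varieties and then to glue. Throughout I use that, since $X$ is complete, each orbit map $t\mapsto tx$ extends to a morphism $\mathbb{A}^1\pfeil X$, so that $\pi(x)=\lim_{t\to0}tx$ exists and is fixed, and that $\pi(tx)=\pi(x)$, whence every cell $X_Z$ is $\Gm$-stable. By Sumihiro's theorem \cite{Sum}, $X$ is covered by $\Gm$-stable affine open subsets; on such a $V=\operatorname{Spec}A$, with $A=\bigoplus_n A_n$, a direct computation with the grading shows that for $x\in V$ the limit $\lim_{t\to0}tx$ lies in $V$ if and only if $f(x)=0$ for every $f\in A_{>0}$, and that in this case the limit is the image of $x$ under the map induced by $A_0\into A$. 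Consequently, if $A$ is non-positively graded ($A_{>0}=0$) then $\pi$ is defined on all of $V$ and $\pi|_V\colon V\pfeil V^\Gm=\operatorname{Spec}A_0=\operatorname{Spec}A^\Gm$ is affine and a good, hence categorical, quotient by $\Gm$.

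First I would locate the source. As $X$ is irreducible, its generic point $\eta$ flows to a single fixed point $s:=\pi(\eta)$, lying in one component $S$ of $X^\Gm$. Spreading out the extension of the orbit map of $\eta$ over a neighbourhood of $\eta$, I obtain a dense open $X^1\subseteq X$ and a morphism $X^1\pfeil X^\Gm$ whose image is irreducible and contains $s$, hence lies in $S$. Choosing by \cite{Sum} a $\Gm$-stable affine $V\ni s$ with $V\cap X^\Gm\subseteq S$ and shrinking $X^1$ so that $\pi(X^1)\subseteq V$, the set $\{x:\pi(x)\in V\}$ is $\Gm$-stable and dense, and for each of its points a suitable translate lies in $V$; the affine criterion above then forces every $f\in A_{>0}$ to vanish on a dense subset of $V$, so $A_{>0}=0$. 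Thus $V$ is non-positively graded, it is a dense open subset of the cell $X_S$, and $\pi|_V$ is the affine quotient onto $V^\Gm\subseteq S$. Uniqueness of the open cell is then immediate: $X_S\supseteq V$ is dense, the cells are pairwise disjoint, and a nonempty open subset of the irreducible $X$ is dense, so no other cell can be open.

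It remains to prove that the \emph{entire} cell $X_S$ is open and that $\pi_S$ is globally the quotient. The same $\Gm$-stability argument shows $V=\{x\in X:\pi(x)\in V^\Gm\}$, so $V$ accounts exactly for the points flowing into the open subset $V\cap S$ of $S$. To cover the rest of $S$ I would argue that the $\Gm$-weights on the normal directions to $S$ are all positive: they are integers, locally constant along the connected component $S$, and positive at the points of $V\cap S$ by the previous paragraph. Hence $S$ is everywhere attracting, and the Bia\l ynicki-Birula local structure theorem \cite{BiaBir}, together with \cite{Sum}, provides around every point of $S$ a $\Gm$-stable affine neighbourhood $V_i$ with non-positive grading. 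Each satisfies $V_i=\{x:\pi(x)\in V_i^\Gm\}$ as before, the sets $V_i^\Gm=V_i\cap S$ cover $S$, and therefore $\bigcup_i V_i=\{x:\pi(x)\in S\}=X_S$ is open. The maps $\pi|_{V_i}$ glue to an affine morphism $\pi_S\colon X_S\pfeil S$ which is a good, hence categorical, quotient, a property that can be checked on the affine cover. Finally $X_S$, being open in the integral $X$, is irreducible, so its quotient $S$ is irreducible, and $S$ is normal because locally $A^\Gm$ is the ring of invariants of the normal ring $A$.

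The step I expect to be the main obstacle is the globalisation in the last paragraph. The spreading-out argument only produces one non-positively graded chart near $\pi(\eta)$, and upgrading ``$X_S$ contains a dense open set'' to ``$X_S$ is open'' requires knowing that the source component is attracting at \emph{every} one of its points. This is precisely where the local constancy of the normal weights along the connected component $S$, and the linearisation of the $\Gm$-action in the possibly singular, positive-characteristic setting, do the real work.
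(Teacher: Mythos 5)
Your first two paragraphs are essentially correct, and they are more self-contained than the paper's own proof, which simply quotes \cite{BiaSwi}: the extension of the limit map to a dense open $X^1$ by spreading out, the shrinking of a Sumihiro chart by a $\Gm$-invariant function so that $V\cap X^\Gm\subseteq S$ (legitimate, since a torus is linearly reductive in every characteristic), the density argument forcing $A_{>0}=0$, and the disjointness argument showing no cell other than $X_S$ can be open are all sound. Up to that point you have proved: $X_S$ contains a dense open $\Gm$-stable affine $V$ on which the limit map is an affine good quotient onto $V\cap S$, and uniqueness of the open cell conditional on its existence.

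The gap is exactly where you locate it, and it is not a technicality: it is the entire content of the proposition. Your globalisation rests on two claims, neither of which is available for a merely normal $X$. First, "the $\Gm$-weights on the normal directions to $S$ are locally constant along $S$" is an argument about the weight decomposition of a locally free sheaf ($TX|_S$, or the conormal bundle of $S$); when $X$ is singular along part of $S$, or $S$ itself is singular, these sheaves are not locally free, and weight-space dimensions of $T_{s'}X$ genuinely jump from point to point of $S$ (the tangent space dimension itself jumps at singular points), so there is no off-the-shelf sense in which your assertion holds. Second, even granting positivity of all normal weights at a point $s'$, producing a $\Gm$-stable affine neighbourhood with $A_{>0}=0$ requires some form of local linearisation, and the Bia\l ynicki-Birula results you cite from \cite{BiaBir} are proved for smooth varieties; they say nothing at a singular $s'\in S$. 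The statement you actually need --- every point of the source of a complete normal $\Gm$-variety admits a non-positively graded $\Gm$-stable affine chart --- is essentially equivalent to the proposition itself, and it is precisely what the sectional-set machinery of \cite{BiaSwi}*{Prop.~2.3 and Thm.~1.5} (the reference on which the paper's proof rests) was built to establish. As written, therefore, your proof establishes the dense chart and the uniqueness statement, but the openness of $X_S$ and the global quotient property remain unproven.
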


\begin{proof}

  The statement is well-known. For example, it follows from the theory
  in \cite{BiaSwi}: Let $S$ be the source, i.e., the connected
  component of $X^\Gm$ such that $\pi(x)\in S$ for $x\in X$
  generic. By \cite{BiaSwi}*{Prop.~2.3}, the set $A^+=\{S\}$ defines a
  sectional set. Now the assertion is \cite{BiaSwi}*{Thm.~1.5}.
\end{proof}

\begin{lemma}\label{lem:GenFiber}

  Let $X$ be as above. Then the general fibers of $\pi_S$ are
  irreducible and generically reduced.

\end{lemma}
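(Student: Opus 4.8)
The plan is to pass to the generic fibre of $\pi_S$ and to reduce both assertions to two field‑theoretic statements. Since $X$ is an irreducible normal variety, the open cell $X_S$ is irreducible and normal, and by \autoref{prop:Source} the map $\pi_S\colon X_S\pfeil S$ is the affine categorical quotient by $\Gm$ onto the irreducible normal source $S$. Put $K=k(S)$ and $L=k(X_S)=k(X)$, and let $X_\eta$ be the fibre over the generic point $\eta=\|Spec| K$. As a localization of the integral scheme $X_S$ it is integral and normal; being the fibre of an affine morphism it is affine, so its coordinate ring $A=\mathcal O(X_\eta)$ is a finitely generated normal domain, non‑negatively graded by the $\Gm$‑weights, with $\|Frac| A=L$ and with $A_0=A^{\Gm}=K$ (the latter because $\pi_S$ is a categorical quotient). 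Because $k=\bar k$, the closed points of $S$ are geometric points, so by the standard constructibility of fibrewise geometric properties (EGA IV, §9) it suffices to show that the geometric generic fibre $X_{\bar\eta}=\|Spec|(A\otimes_K\bar K)$ is irreducible and generically reduced. Since $X_\eta$ is already integral, this is exactly the assertion that $K$ is separably algebraically closed in $L$ (irreducibility) and that $L/K$ is separable (generic reducedness).

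I would settle irreducibility first, and here normality together with the contracting action does the work. Every fibre is in fact connected: a point $x$ over $z\in S$ satisfies $\lim_{t\pfeil0}t\cdot x=z$, so $z$ lies in the closure of $\Gm\cdot x$, and the fibre, being $\Gm$‑stable, is therefore connected; equivalently $A\otimes_K\bar K$ is non‑negatively graded with degree‑zero part $\bar K$, so every point of its spectrum specializes to the vertex. For genuine irreducibility I claim $K$ is even algebraically closed in $L$: any $\theta\in L$ algebraic over $K$ is integral over $A$, hence lies in $A$ by normality, and the algebraic closure of $K$ inside $A$ is a $\Gm$‑stable algebraic extension of $K$; as $\Gm$ is connected while the automorphism group of an algebraic extension is totally disconnected, $\Gm$ acts trivially on it, whence $\theta\in A^{\Gm}=A_0=K$. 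Thus $K$ is separably algebraically closed in $L$, $X_{\bar\eta}$ is irreducible, and spreading out makes the general closed fibre irreducible.

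The remaining, and principal, difficulty is generic reducedness, i.e. the separability of $L/K$, equivalently the generic smoothness of $\pi_S$. The section $s\colon S\into X_S$ of $\pi_S$ — the inclusion of the fixed locus — is the natural tool: its differential splits $d\pi_S$ along $s(S)$, so $d\pi_S$ is surjective there, and at any point of $s(S)$ at which $X$ is smooth over $k$ the morphism $\pi_S$ is smooth, yielding a smooth $K$‑point of $X_\eta$ and hence the separability of $L/K$. The obstacle is concentrated in the case in which the source lies entirely inside the singular locus of $X$ — so that $K$ is imperfect and the section meets only singular points. Here normality and the grading alone are genuinely insufficient: for instance $A=K[x,y,z]/(x^p+sy^p+uz^p)$ over $K=k(s,u)$, graded with $\deg x=\deg y=\deg z=1$, is a normal non‑negatively graded domain with $A_0=K$ whose fraction field is inseparable over $K$. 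To rule this out one must bring in the global hypothesis that $X$ is complete — invisible to the local graded picture — presumably through the opposite ``sink'' cell of the Bia\l ynicki‑Birula decomposition together with the normality of $X$, forcing the geometric generic fibre to be generically reduced. Establishing this separability is the crux of the lemma; by contrast the reduction and the irreducibility above are essentially formal.
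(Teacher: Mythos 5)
Your reduction to the two field-theoretic statements is the right frame, and your proof of the irreducibility half is correct, though it runs differently from the paper's: the paper deduces that the generic fibre is geometrically unibranch from normality of $X_S$ via \cite{EGA4}*{6.15.6}, observes that every irreducible component of the geometric generic fibre contains the unique $\Gm$-fixed point, and then spreads out using \cite{Jou}*{Thm.~4.10}; you instead prove that $K$ is algebraically closed in $L$ from normality of $A$ plus the grading. Your route is more elementary and gives algebraic (not merely separable) closedness; both are valid.

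The genuine problem is the half you leave open, and there the situation is worse than you suspect: the gap cannot be closed, because your local counterexample globalizes, so completeness of $X$ (and the sink) cannot come to the rescue. Run your example in relative form: $A=k[s,u][x,y,z]/(x^p+sy^p+uz^p)$, graded by $\deg s=\deg u=0$ and $\deg x=\deg y=\deg z=1$. This $A$ is a hypersurface, hence S2, and its non-regular locus equals its non-smooth locus (as $k$ is perfect), namely the codimension-two set $\{x=y=z=0\}$; so $W:=\operatorname{Spec}A$ is a normal affine $\Gm$-variety on which all limits at $t\to0$ exist, the limit morphism being the projection $\pi_W\colon W\to\mathbf{A}^2=\operatorname{Spec}k[s,u]$. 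Its scheme-theoretic fibre over any closed point $(s_0,u_0)$ is $k[x,y,z]/\bigl((x+s_0^{1/p}y+u_0^{1/p}z)^p\bigr)$: irreducible and nowhere reduced. By Sumihiro's equivariant completion theorem \cite{Sum} followed by normalization, $W$ embeds $\Gm$-equivariantly as a dense open subvariety of a complete normal $\Gm$-variety $X$. Every point of $W$ has its limit in $W^{\Gm}=\mathbf{A}^2$, so $W\subseteq X_S$ and $\mathbf{A}^2=S\cap W$ is a dense open subset of the source $S$ (open since $W$ is open, dense since $S$ is irreducible by \autoref{prop:Source}); moreover $\pi_S|_W=\pi_W$, both being the restriction to $\{0\}\times W$ of the unique extension of the action morphism over $\mathbf{A}^1\times W$. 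Hence for every $z\in\mathbf{A}^2$ the fibre $\pi_S^{-1}(z)$ contains the nowhere reduced $\pi_W^{-1}(z)$ as an open subscheme, so it has a generic point with non-reduced local ring. Thus the second assertion of the lemma is \emph{false} in positive characteristic: no global hypothesis can repair a statement that depends only on $\pi_S$ over a dense open subset of $S$.

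Your example in fact pinpoints the error in the paper's own proof, which runs: $\pi_S$ is a categorical quotient, hence $k(S)$ is separable inside $k(X_S)$, hence $\pi_S$ is generically smooth. The map $\pi_W$ above is the affine quotient of $W$ by the linearly reductive group $\Gm$, hence a categorical (even good) quotient, and yet $k(W)/k(s,u)$ is inseparable. What the quotient property does give is that $k(X_S)$ is purely transcendental, in particular separable, over the full invariant field $k(X_S)^{\Gm}$ (the graded-fraction-field argument), and $k(S)$ coincides with $k(X_S)^{\Gm}$ only when the generic fibre of $\pi_S$ is one-dimensional. In your example the inseparability sits in the intermediate extension $k(W)^{\Gm}/k(S)$, where $k(W)^{\Gm}$ is the function field of the regular but non-smooth plane curve $x^p+sy^p+uz^p=0$ over $k(s,u)$. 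So the lemma's second assertion (and the paper's proof of it) is correct when the fibres of $\pi_S$ have dimension at most one, or in characteristic zero, but not in the stated generality: what you flagged as the crux of your own attempt is a genuine error in the source, not a defect of your approach.
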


\begin{proof}

  Since $X_S$ is normal, the generic fiber of $\pi_S$ is geometrically
  unibranch (\cite{EGA4}*{6.15.6}). Since all irreducible components
  contain the $\Gm$-fixed point, it follows that the generic fiber is
  geometrically irreducible. Thus, there is an open subset of $S$ over
  which all fibers are geometrically irreducible, as well
  (\cite{Jou}*{Thm.~4.10}).

  The second property follows from the fact that $\pi_S$ is a
  categorical quotient. This entails that $k(S)$ is separable inside
  $k(X_S)$. Therefore, $\pi_S$ is smooth generically on $X_S$.
\end{proof}

There is a second well-behaved cell. For this, let $X^-:=X$ but with
the opposite $\Gm$-action: $t\ast x:=t^{-1}\cdot x$. Then the source
$T$ of $X^-$ is called the \emph{sink of $X$}. It is characterized by
the fact that $\pi(x)\in T$ implies $x\in T$. Thus, $T=X_T$ is a Bia\l
ynicki-Birula cell by itself. Symmetrically, $S$ is the sink of $X^-$
and therefore $X^-_S=S$.

Now assume that $X$ is a $G$-variety for some connected reductive
group $G$ and that the $\Gm$-action is induced by a $1$-parameter
subgroup $\lambda:\Gm\pfeil G$. Then we put $X^\lambda:=S$ and
$X_\lambda:=X_S$ and $\pi_\lambda=\pi_S$. Observe that $X^\lambda$ is
not the entire fixed point set of $\lambda(\Gm)$ but only a very
special component.

Let $G^\lambda:=C_G(\lambda(\Gm))$ be the fixed point set under the
conjugation action and let
\begin{equation}\label{eq:18}
  G_\lambda:=\{g\in G\mid
  \pi_G(g):=\lim_{t\pfeil0}\lambda(t)g\lambda(t)^{-1}\hbox{ exists}\}.
\end{equation}
Then $G_\lambda$ is a parabolic subgroup with Levi complement
$G^\lambda$ and the map $\pi_G:G_\lambda\pfeil G^\lambda$ is the
natural homomorphism with kernel $G^u_\lambda$, the unipotent radical
of $G_\lambda$. The following lemma is well-known, e.g., the proof
given in \cite{LuGC} carries over verbatim to positive characteristic.

\begin{lemma}

  The open cell $X_\lambda$ is $G_\lambda$-invariant and
  $\pi_\lambda:X_\lambda\pfeil X^\lambda$ is $G_\lambda$-equivariant
  where $G_\lambda$ acts on $X^\lambda$ via $\pi_G:G_\lambda\pfeil
  G^\lambda$, i.e., $\pi_\lambda(gx)=\pi_G(g)\pi_\lambda(x)$ for all
  $g\in G_\lambda$ and $x\in X_\lambda$. Moreover, $X^\lambda$
  consists of fixed points for the opposite unipotent radical
  ${}^-\!G^u_\lambda$.

\end{lemma}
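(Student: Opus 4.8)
The plan is to reduce all three assertions to a single limit computation. Fix $g\in G_\lambda$ and an arbitrary point $x\in X$, and rewrite the orbit map along $\lambda$ as
\begin{equation}
  \lambda(t)\,gx=\bigl(\lambda(t)g\lambda(t)^{-1}\bigr)\bigl(\lambda(t)x\bigr).
\end{equation}
As $t\pfeil0$ the first factor tends to $\pi_G(g)\in G^\lambda$ --- this limit exists precisely because $g\in G_\lambda$ --- while the second tends to $\pi(x)$, which exists because $X$ is complete. Since the action $G\times X\pfeil X$ is a morphism and both factors extend across $t=0$, so does their product, and one obtains the master identity
\begin{equation}\label{eq:master}
  \pi(gx)=\lim_{t\pfeil0}\lambda(t)\,gx=\pi_G(g)\cdot\pi(x).
\end{equation}

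For the first two assertions I would restrict \eqref{eq:master} to $x\in X_\lambda$, so that $\pi(x)\in X^\lambda=S$. The Levi subgroup $G^\lambda=C_G(\lambda(\Gm))$ commutes with $\lambda(\Gm)$ and hence acts on the fixed locus $X^\Gm$; being connected, it preserves the connected component $S$. Therefore $\pi(gx)=\pi_G(g)\pi(x)\in S$, which is exactly the statement that $gx\in X_\lambda$ (so $X_\lambda$ is $G_\lambda$-invariant) together with $\pi_\lambda(gx)=\pi_G(g)\pi_\lambda(x)$ (so $\pi_\lambda$ is $G_\lambda$-equivariant for the action of $G_\lambda$ on $X^\lambda$ through $\pi_G$).

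For the last assertion I would fix $x\in S$ and $u\in{}^-\!G^u_\lambda$ and show $ux=x$. Since $x$ is $\Gm$-fixed we have $\lambda(t)x=x$, so $\lambda(t)\,ux=\bigl(\lambda(t)u\lambda(t)^{-1}\bigr)x$; as $t\pfeil\infty$ the conjugate $\lambda(t)u\lambda(t)^{-1}$ tends to the identity, this being the characterization of the opposite unipotent radical. Hence $\lim_{t\pfeil\infty}\lambda(t)\,ux=x$. But this limit is precisely the value at $ux$ of the limit map for the opposite action $X^-$, so $ux$ lies in the cell $X^-_S$, which equals $S$ because $S$ is the sink of $X^-$. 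Thus $ux\in S\subseteq X^\Gm$ is itself $\Gm$-fixed, whence $\lambda(t)\,ux=ux$ is constant in $t$ and equals its limit $x$; therefore $ux=x$.

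The remaining computations are formal, and the only delicate point is the passage to the limit in \eqref{eq:master}: I expect the main obstacle to be justifying that the $X$-valued map $t\mapsto\lambda(t)gx$ extends across $t=0$ with the expected value. I would settle this by extending $t\mapsto\bigl(\lambda(t)g\lambda(t)^{-1},\lambda(t)x\bigr)$ to a morphism defined at $t=0$ (using completeness of $X$ and separatedness of $G\times X$) and then composing with the action morphism.
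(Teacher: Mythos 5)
Your proof is correct; note that the paper itself gives no argument for this lemma---it remarks only that the statement is well known and that the proof in \cite{LuGC} carries over verbatim to positive characteristic---and your write-up is in essence that standard proof: factor $\lambda(t)gx=(\lambda(t)g\lambda(t)^{-1})(\lambda(t)x)$, extend both factors across $t=0$ (using $g\in G_\lambda$ for the first and completeness of $X$ for the second), compose with the action morphism to get $\pi(gx)=\pi_G(g)\pi(x)$, use connectedness of $G^\lambda$ to see it preserves the source, and handle ${}^-\!G^u_\lambda$ by the symmetric limit at $t\to\infty$ combined with the sink characterization $X^-_S=S$. All the ingredients you invoke (existence of limits on a complete variety, connectedness of the Levi $G^\lambda$, and the identity $X^-_S=S$) are already available in the paper at that point, so the argument stands as written.
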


The next lemma provides a link between closed orbits in $X$ and closed
orbits in $X^\lambda$:

\begin{lemma}\label{lem:ClosedOrbit}

  Let $X$ be a complete normal $G$-variety and let $Z\subseteq
  X^\lambda$ be a closed $G^\lambda$-orbit. Then $GZ\subseteq X$ is a
  closed $G$-orbit with $GZ\cap X^\lambda=Z$.

\end{lemma}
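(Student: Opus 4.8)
The plan is to fix a point $z\in Z$, so that $Z=G^\lambda z$ and $GZ=Gz$ is already a single $G$-orbit; it then remains to show that $Gz$ is closed and that $Gz\cap X^\lambda=G^\lambda z$. I will use repeatedly that $X^\lambda$ is a connected component of the fixed-point set $X^{\lambda(\Gm)}$, hence closed in $X$, that $z$ is $\lambda(\Gm)$-fixed, and that by the preceding lemma ${}^-\!G^u_\lambda$ fixes $X^\lambda$ pointwise.

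To prove closedness, first note that $Z$ is stable under the opposite parabolic ${}^-\!G_\lambda$ (with Levi $G^\lambda$ and unipotent radical ${}^-\!G^u_\lambda$): one has $G^\lambda Z=Z$ because $Z$ is a $G^\lambda$-orbit, and ${}^-\!G^u_\lambda$ fixes $Z\subseteq X^\lambda$ pointwise. Since $Z$ is closed in $X$ it is complete, so $G\times^{{}^-\!G_\lambda}Z$ — a bundle over the projective variety $G/{}^-\!G_\lambda$ with complete fibre $Z$ — is complete. Hence the image of the morphism $G\times^{{}^-\!G_\lambda}Z\to X$, $[g,z']\mapsto gz'$, which is exactly $GZ=Gz$, is closed.

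For the intersection I now treat the closed orbit $Gz$ as a complete normal (indeed smooth, being homogeneous) $G$-variety and apply \autoref{prop:Source} to it; let $S'$ be its source. The inclusion $Z\subseteq Gz\cap X^\lambda$ is clear, so I must prove the reverse. The key point is a weight computation: for $\lambda$-fixed $y\in Gz$ the orbit map identifies $T_y(Gz)$ with $\|Lie| G/\|Lie|(G_y)$ as a $\lambda$-module, and the negative-weight part of $\|Lie| G$ is $\|Lie|({}^-\!G^u_\lambda)$; thus if ${}^-\!G^u_\lambda$ fixes $y$ then $\|Lie|({}^-\!G^u_\lambda)\subseteq\|Lie|(G_y)$, so $T_y(Gz)$ has no negative $\lambda$-weights, which on the smooth variety $Gz$ means its cell is open, whence $y\in S'$ by the uniqueness in \autoref{prop:Source}. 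Applying the preceding lemma to $Gz$, every point of $S'$ is ${}^-\!G^u_\lambda$-fixed; for such a point the weight-$0$ part of $T_y(Gz)$ is simultaneously the tangent space to $S'$ and to $G^\lambda y$, so $\dim G^\lambda y=\dim S'$ and $G^\lambda y$ is open in $S'$. Since $S'$ is irreducible this forces $S'=G^\lambda z=Z$. Finally, any $y\in Gz\cap X^\lambda$ is $\lambda$-fixed and, by the preceding lemma applied to $X$, fixed by ${}^-\!G^u_\lambda$; the criterion above then gives $y\in S'=Z$, so $Gz\cap X^\lambda\subseteq Z$.

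The main obstacle is precisely the intersection statement, because the source of a $\Gm$-stable subvariety is in general not the intersection with the global source; it is the weight computation in $T_y(Gz)$ that forces the two sources to agree here. I would also be careful to invoke only the implication ``${}^-\!G^u_\lambda$ fixes $y$ $\Rightarrow$ $\|Lie|({}^-\!G^u_\lambda)\subseteq\|Lie|(G_y)$'', since the converse infinitesimal statement can fail in positive characteristic.
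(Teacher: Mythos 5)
Your first half (closedness of $GZ$) is correct, and it is a legitimate variant of the paper's argument: instead of inducing from the parabolic ${}^-\!G_\lambda$, the paper picks the unique ${}^-\!B^\lambda$-fixed point $z\in Z$, notes that it is ${}^-\!G^u_\lambda$-fixed because it lies in the source, and concludes that $G_z\supseteq{}^-\!B$, so that $GZ=Gz$ is complete. The second half, however, has a genuine gap precisely in the setting this paper is written for. The identification $T_y(Gz)\cong\operatorname{Lie}(G)/\operatorname{Lie}(G_y)$ holds only when the orbit map $G\to Gz$ is separable, equivalently when the stabilizer scheme $G_y$ is smooth; in general its differential gives only an injection $\operatorname{Lie}(G)/\operatorname{Lie}(G_y)\into T_y(Gz)$, and the cokernel is exactly what you cannot see. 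Non-reduced stabilizers are unavoidable here: for $G=\SL(2)$ in characteristic $p$ and $Q={}^-\!B\cdot G_1$ (with $G_1$ the Frobenius kernel, one of Wenzel's subgroup schemes from the paper's example in Section 2), the orbit $G/Q\cong\P^1$ has $\operatorname{Lie}(Q)=\operatorname{Lie}(G)$, so $\operatorname{Lie}(G)/\operatorname{Lie}(G_y)=0$, while $T_y(G/Q)$ is one-dimensional of weight $2p$. So ``no negative weights in $\operatorname{Lie}(G)/\operatorname{Lie}(G_y)$'' does not yield ``no negative weights in $T_y(Gz)$'', and your criterion for $y$ to lie in the source is unproven (this is independent of the further issue of citing Bia\l ynicki-Birula theory for smooth varieties in characteristic $p$). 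The same defect hits the step where you identify the weight-zero part of $T_y(Gz)$ with $T_y(G^\lambda y)$: the stabilizer $(G^\lambda)_y$ may be non-reduced as well, so $\dim G^\lambda y$ can be strictly smaller than $\dim T^0_y$, and the conclusion that $S'$ is a single $G^\lambda$-orbit breaks down. You guarded against one characteristic-$p$ trap (Lie algebra containment does not imply group containment), but the failure is in the opposite direction: group-level fixedness does not allow you to compute the full tangent space Lie-theoretically.

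The repair is to avoid differentiating anything, which is exactly what the paper's proof does. With $z$ the ${}^-\!B$-fixed point as above, dominance gives $B\subseteq G_\lambda$, so for $b\in B$ one has $\lambda(t)bz=(\lambda(t)b\lambda(t)^{-1})z\pfeil\pi_G(b)z\in B^\lambda z\subseteq Z$; since $Bz$ is open and dense in $Gz$ (big cell, because $G_z\supseteq{}^-\!B$), \autoref{prop:Source} applied to the smooth complete variety $Gz$ identifies its source with $Z$, which is therefore a full connected component of the fixed-point set of $Gz$. Then $Gz\cap X_\lambda$ is open dense in $Gz$, hence irreducible, so $Gz\cap X^\lambda=\pi_\lambda(Gz\cap X_\lambda)$ is a connected set of $\lambda$-fixed points containing the component $Z$, and therefore equals $Z$. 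Every step here is a statement about the flow $\pi_\lambda$ and completeness, valid in any characteristic, which is why the paper's proof survives non-reduced stabilizers while the infinitesimal argument does not.
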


\begin{proof}

  Since $Z$ is complete and homogeneous, it contains a unique fixed
  point $z$ for ${}^-\!B^\lambda$, the Borel subgroup opposite to
  $B^\lambda$. Since $z$ is in the source, it is a
  ${}^-\!G^u_\lambda$-fixed point. So $z$ is fixed by
  ${}^-\!B={}^-\!B^\lambda\,{}^-\!G^u_\lambda$ which implies that
  $Gz=GZ$ is a complete, hence closed $G$-orbit of $X$. Moreover,
  since $\lambda$ is dominant, we have $z\in (GZ)^\lambda$ and
  therefore $GZ\cap X^\lambda=(Gz)^\lambda=G^\lambda z=Z$.
\end{proof}

Recall that a complete spherical $G$-variety $X$ is called
\emph{toroidal} if no color of $X$ contains a $G$-orbit.

\begin{proposition}\label{prop:LocalizationAtS1}

  Let $\lambda$ be a dominant $1$-parameter subgroup and let $X$ be a
  complete toroidal spherical variety. Then $X^\lambda$ is a complete
  toroidal spherical $G^\lambda$-variety.

\end{proposition}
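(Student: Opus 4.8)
The plan is to verify the three defining properties of a toroidal spherical $G^\lambda$-variety in turn, noting first that $G^\lambda=C_G(\lambda(\Gm))$ centralizes $\lambda(\Gm)$ and hence preserves each connected component of $X^\Gm$, so that $X^\lambda$ really is a $G^\lambda$-variety. Completeness and normality are then immediate: $X^\lambda$ is a connected component of the closed set $X^\Gm$, hence closed in the complete variety $X$ and therefore complete, while its irreducibility and normality are part of \autoref{prop:Source}. For sphericality I would exploit that $\lambda$ is dominant, so that $B\subseteq G_\lambda$ and the image $B^\lambda:=\pi_G(B)=B\cap G^\lambda$ is a Borel subgroup of the connected reductive Levi $G^\lambda$. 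Choose $x_0$ in the open $B$-orbit with $x_0\in X_\lambda$; this is possible because $X_\lambda$ is open dense and $B$-stable (as $B\subseteq G_\lambda$). By the equivariance of $\pi_\lambda$ established in the preceding lemma, $\pi_\lambda(Bx_0)=\pi_G(B)\,\pi_\lambda(x_0)=B^\lambda\pi_\lambda(x_0)$. Since $\pi_\lambda$ is surjective and $Bx_0$ is dense in $X_\lambda$, this $B^\lambda$-orbit is dense in $X^\lambda$; being an orbit it is open in its closure, hence open in $X^\lambda$. Thus $X^\lambda$ is spherical.

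For toroidality I would argue by contradiction, first reducing to closed orbits: since colors are closed, a color contains a $G^\lambda$-orbit if and only if it contains a closed one. So suppose a color $D^\lambda$ of $X^\lambda$ contains a closed $G^\lambda$-orbit $Z$. By \autoref{lem:ClosedOrbit}, $GZ$ is a closed $G$-orbit with $GZ\cap X^\lambda=Z$. I would then pull $D^\lambda$ back along the retraction: set $\widetilde D:=\overline{\pi_\lambda^{-1}(D^\lambda)}$, the closure in $X$. Using that the general fibers of $\pi_\lambda$ are irreducible (\autoref{lem:GenFiber}), $\pi_\lambda^{-1}(D^\lambda)$ is an irreducible $B$-stable divisor of $X_\lambda$, so $\widetilde D$ is a $B$-stable prime divisor of $X$. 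It is not $G$-invariant: otherwise $\pi_\lambda^{-1}(D^\lambda)=\widetilde D\cap X_\lambda$ would be $G_\lambda$-stable, forcing $D^\lambda=\pi_\lambda(\pi_\lambda^{-1}(D^\lambda))$ to be $\pi_G(G_\lambda)=G^\lambda$-stable, contrary to $D^\lambda$ being a color. Hence $\widetilde D$ is a color of $X$.

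It remains to show $GZ\subseteq\widetilde D$, which contradicts the toroidality of $X$. The key observation is that the entire Bia\l ynicki-Birula cell of $GZ$ flows into $D^\lambda$: the intersection $GZ\cap X_\lambda$ is open and dense in the irreducible variety $GZ$ (it contains $Z$, since source points are fixed and hence lie in the open cell), and for any $x$ in it one has $\pi_\lambda(x)=\lim_{t\to0}\lambda(t)x\in GZ\cap X^\lambda=Z\subseteq D^\lambda$, because $GZ$ is closed and $\Gm$-stable. Therefore $GZ\cap X_\lambda\subseteq\pi_\lambda^{-1}(D^\lambda)\subseteq\widetilde D$, and passing to closures yields $GZ\subseteq\widetilde D$, the desired contradiction.

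I expect the main obstacle to be the bookkeeping concentrated in the last two paragraphs: verifying that $\widetilde D$ is genuinely a color — that is, the irreducibility and non-$G$-invariance of the pullback, where \autoref{lem:GenFiber} and the equivariance of $\pi_\lambda$ are essential — together with the observation that the flow of the whole cell over $Z$ lands inside $D^\lambda$. The reduction to closed orbits and the formal deduction of the contradiction are comparatively routine.
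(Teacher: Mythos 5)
Your handling of completeness, normality, and sphericality is correct and coincides with the paper's argument, and your reduction of toroidality to closed $G^\lambda$-orbits is fine. But there is a genuine gap at the pivotal step: you assert that $\pi_\lambda^{-1}(D^\lambda)$ is \emph{irreducible}, citing \autoref{lem:GenFiber}. That lemma only says the fibers of $\pi_\lambda$ over some dense open subset of $X^\lambda$ are irreducible; the color $D^\lambda$ is a proper closed subset which such an open set may miss entirely, so the lemma gives no control whatsoever over the fibers above $D^\lambda$, let alone over the global structure of its preimage. Indeed $\pi_\lambda^{-1}(D^\lambda)$ may well be reducible: besides a component dominating $D^\lambda$, it can contain components lying over proper closed subsets of $D^\lambda$ (for instance $F\cap X_\lambda$ for a $G$-invariant divisor $F\supseteq GZ$ of $X$ whose open cell flows into $Z$). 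This is fatal to your concluding contradiction, which needs a \emph{color} of $X$ --- an irreducible $B$-stable, non-$G$-invariant divisor --- containing $GZ$. If $\widetilde D$ is reducible, your two arguments no longer combine: you know the \emph{union} $\widetilde D$ contains $GZ$ and that the \emph{union} is not $G$-stable, but the particular irreducible component containing $GZ$ could be $G$-invariant, and toroidality forbids nothing about $G$-invariant divisors.

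The paper avoids exactly this trap. It works with the unique irreducible component $E'$ of $\pi_\lambda^{-1}(D^\lambda)$ mapping \emph{onto} $D^\lambda$; to get this one needs \autoref{lem:GenFiber} \emph{together with} the fact that $D^\lambda$ meets the open $G^\lambda$-orbit (by $G^\lambda$-equivariance, the dense open set of the lemma can be saturated, so all fibers over the open orbit, hence over a dense subset of $D^\lambda$, are irreducible; this yields uniqueness of the dominating component). Then $E:=\overline{E'}$ is a $B$-stable prime divisor with $Z\subseteq E$ (by $\Gm$-stability of $E'$), and the containment $GZ\subseteq E$ is obtained from $B$-stability via $GZ=\overline{BZ}$, which follows from \autoref{lem:ClosedOrbit} ($GZ$ is a complete orbit with a ${}^-\!B$-fixed point, so $BZ$ is dense in it). Note that your flow argument --- pleasant as it is --- cannot be substituted at this point: it shows that points of $GZ\cap X_\lambda$ flow \emph{into} $Z\subseteq D^\lambda$, i.e.\ that they lie in the full preimage, but a point whose limit lies in $E'$ need not itself lie in $E'$. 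So once you are forced to replace $\widetilde D$ by the single dominating component (to have a prime divisor and to run your non-$G$-invariance argument, which does work for that component), you must also replace the flow argument by the Bruhat-type argument $GZ=\overline{BZ}$. Closing the gap thus requires both of the paper's extra ingredients.
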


\begin{proof}

  This is basically \cite{LuGC}*{Prop.~1.4}. We recall the proof and
  check that it is characteristic free.

  First of all, completeness of $X^\lambda$ is clear while
  irreducibility and normality were obtained in
  \autoref{prop:Source}. Moreover, the dominance of $\lambda$ implies
  $B\subseteq G_\lambda$. Hence the open $B$-orbit of $X$ is contained
  in $X_\lambda$. Its image in $X^\lambda$ is an open
  $B^\lambda$-orbit. Thus, $X^\lambda$ is spherical, as well.

  Now let $D\subseteq X^\lambda$ be a color containing the closed
  $G^\lambda$-orbit $Z$. Then $\pi_\lambda^{-1}(D)$ contains a unique
  component $E'$ which maps onto $D$ (by \autoref{lem:GenFiber} and
  the fact that $D$ meets the open $G^\lambda$-orbit). Then $E$, the
  closure of $D$ in $X$, is a color which contains $Z$. Since $GZ$ is
  a closed $G$-orbit by \autoref{lem:ClosedOrbit} we have
  $GZ=\overline{BZ}$ and therefore $GZ\subseteq E$. It follows that
  $E$ is $G$-invariant since $X$ is toroidal. From this we get that
  $D=\pi_\lambda(E)$ is $G^\lambda$-invariant in contradiction to $D$
  being a color.
\end{proof}

A toroidal spherical variety $X$ determines a (pointed) fan
$\cF=\cF(X)$ in $N_\QQ(X)$ whose support is $\cV(X)$. More precisely,
for any $G$-orbit $Y\subseteq X$ the invariant valuations whose center
in $X$ contain $Y$ form a cone $\cC_Y\subseteq\cV(X)$. Then $\cF$ is
the collection of cones of the form $\cC_Y$.

The fan $\cF$ is precisely the piece of information needed to
reconstruct $X$ from its open $G$-orbit $X_0$. In fact, $X$ is the
compactification of $X_0$ corresponding to the colored fan
$(\cF,\leer)$. See \cite{Hyd} for more details.

Let $\lambda$ be a dominant $1$-parameter subgroup and let $X$ be a
complete toroidal spherical variety with associated fan $\cF$. Then
$\lambda$ induces, via restriction to $\Xi_\QQ(X)$, an element
$\lambda^r\in N_\QQ(X)$. The dominance of $\lambda$ implies
$-\lambda^r\in\cV(X)$. Thus we can consider the fan
\begin{equation}
  \tilde\cF^\lambda:=\{\cC+\QQ_{\ge0}\lambda\mid-\lambda^r\in\cC\in\cF\}
\end{equation}
One may think of $\tilde\cF^\lambda$ as the restriction of $\cF$ to a
neighborhood of $-\lambda^r$. Visibly, this fan is not pointed since
all its members contain the line $\QQ\lambda^r$. More precisely, let
$\cC(\lambda)$ be the unique cone in $\cF$ such that $-\lambda^r$ is
contained in its relative interior. Then
$V(\lambda):=\<\cC(\lambda)\>_\QQ=\cC(\lambda)+\QQ_{\ge0}\lambda^r$ is
the unique element of $\tilde\cF^\lambda$ which is a subspace. Thus,
\begin{equation}
  \cF^\lambda:=\{\cC/V(\lambda)\mid\cC\in\tilde\cF^\lambda\}.
\end{equation}
is a pointed fan which lives in the vector space $N_\QQ(X)/V(\lambda)$
and is called the \emph{localization of $\cF$ at $\lambda$}.

\begin{theorem}

  Let $\lambda$ and $X$ as in \autoref{prop:LocalizationAtS1}. Then
  $\Xi(X^\lambda)=\Xi(X)\cap V(\lambda)^\perp$,
  $N_\QQ(X^\lambda)=N_\QQ(X)/V(\lambda)$, and
  $\cF(X^\lambda)=\cF(X)^\lambda$.

\end{theorem}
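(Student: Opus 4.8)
The plan is to establish the three claimed equalities in sequence, since each builds on the geometric structure already set up. The theorem is fundamentally a statement that localization at $\lambda$ on the combinatorial side ($\cF \mapsto \cF^\lambda$) matches the geometric localization $X \mapsto X^\lambda$ provided by the Białynicki-Birula source. The first identity $\Xi(X^\lambda) = \Xi(X) \cap V(\lambda)^\perp$ is the foundation: I would prove it by analyzing which $B$-semiinvariants on $X$ descend to $B^\lambda$-semiinvariants on $X^\lambda$. The key point established in \autoref{prop:LocalizationAtS1} is that the open $B$-orbit of $X$ lies in $X_\lambda$ and maps to the open $B^\lambda$-orbit of $X^\lambda$ via $\pi_\lambda$. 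A $B$-semiinvariant $f$ with character $\chi$ pulls back/pushes forward correctly precisely when $f$ is constant along the generic fibers of $\pi_\lambda$, i.e., invariant under the $\Gm=\lambda(\Gm)$-action. Since the generic fiber of $\pi_\lambda$ is the closure of a $\lambda(\Gm)$-orbit (\autoref{lem:GenFiber} gives irreducible, generically reduced fibers), $f$ descends exactly when $\langle \chi, \lambda \rangle = 0$, which is the condition $\chi \in V(\lambda)^\perp$ (recall $V(\lambda) \supseteq \QQ_{\ge 0}\lambda^r$, so $V(\lambda)^\perp$ sees the $\lambda$-direction). One inclusion is immediate; for the reverse I would use that $X^\lambda$ is spherical (already known) so its semiinvariants are exactly accounted for.

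Second, $N_\QQ(X^\lambda) = N_\QQ(X)/V(\lambda)$ should follow formally from the first identity by dualizing. Since $N_\QQ(X) = \|Hom|(\Xi(X),\QQ)$ and $\Xi_\QQ(X^\lambda) = \Xi_\QQ(X) \cap V(\lambda)^\perp$ is a subspace of $\Xi_\QQ(X)$, its dual $N_\QQ(X^\lambda)$ is naturally the quotient of $N_\QQ(X)$ by the annihilator of $\Xi_\QQ(X^\lambda)$, which is $V(\lambda)$. I would need to check that $V(\lambda)$ is exactly this annihilator, using that $V(\lambda) = \cC(\lambda) + \QQ_{\ge0}\lambda^r$ is the linear span $\langle\cC(\lambda)\rangle_\QQ$ and that $\Xi_\QQ(X) \cap V(\lambda)^\perp$ pairs to zero with it by construction. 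This step is essentially linear algebra once the character lattices are pinned down.

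The substantive content is the third identity, $\cF(X^\lambda) = \cF(X)^\lambda$, which asserts that the $G^\lambda$-orbit structure of $X^\lambda$ matches the localized fan. The strategy is to set up a bijection between the $G^\lambda$-orbits $Z \subseteq X^\lambda$ and the $G$-orbits $Y \subseteq X$ whose associated cone $\cC_Y$ contains $-\lambda^r$, then verify that the cones match under the quotient by $V(\lambda)$. The bijection should come from \autoref{lem:ClosedOrbit} and its relative version: a $G^\lambda$-orbit $Z$ in $X^\lambda$ gives $GZ$ in $X$, and the orbits $Y$ meeting $X^\lambda$ are precisely those with $Y \cap X^\lambda \neq \leer$, which by the source characterization correspond to cones $\cC_Y$ containing $-\lambda^r$ in their relative interior or boundary. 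For the cone identification I would trace through how invariant valuations on $X^\lambda$ relate to those on $X$: a $G^\lambda$-invariant valuation $v$ on $X^\lambda$ extends to a $G$-invariant valuation on $X$ (using $\lambda$-dominance), and centers correspond under the source map. The equation $\tilde\cF^\lambda = \{\cC + \QQ_{\ge0}\lambda \mid -\lambda^r \in \cC \in \cF\}$ precisely encodes "restrict $\cF$ near $-\lambda^r$ and add the $\lambda$-ray," then quotient by the subspace $V(\lambda)$.

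\textbf{The main obstacle} I anticipate is the third identity, specifically matching centers of valuations across the source map $\pi_\lambda$ and verifying the cone $\cC_Y$ (for $Y \subseteq X$ meeting $X^\lambda$) descends to $\cC_{Y \cap X^\lambda}$ in $N_\QQ(X)/V(\lambda)$. The delicate point is that $X^\lambda$ is only a special component of the full $\lambda$-fixed locus, so I must consistently use the source characterization (a valuation's center contains $Y$ iff, after localizing, it contains $Y \cap X^\lambda$) rather than naively intersecting. Controlling which orbits survive and confirming no spurious orbits appear --- i.e., that the correspondence $Y \leftrightarrow Z$ is a bijection onto all $G^\lambda$-orbits and that $-\lambda^r \in \cC_Y$ is the exact survival criterion --- will require care, likely invoking that $X$ is the toroidal compactification attached to $(\cF, \leer)$ so that orbits and cones are in order-reversing bijection. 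Once the orbit-cone dictionary is transported correctly, the fan equality is forced.
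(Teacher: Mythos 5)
There is a genuine gap, and it sits exactly where the fan ought to enter your argument for the first identity. You assert that a $B$-semiinvariant $f_\chi$ descends along $\pi_\lambda$ if and only if it is $\lambda(\Gm)$-invariant, and you then treat $\langle\chi,\lambda\rangle=0$ as being ``the condition $\chi\in V(\lambda)^\perp$''. Neither equivalence holds. The space $V(\lambda)=\langle\cC(\lambda)\rangle_\QQ$ is the span of the \emph{whole} cone of $\cF(X)$ whose relative interior contains $-\lambda^r$; whenever $\dim\cC(\lambda)\ge2$ one has $V(\lambda)^\perp\subsetneq\lambda^\perp$ inside $\Xi_\QQ(X)$, so $\Gm$-invariance is strictly weaker than $\chi\in V(\lambda)^\perp$. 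Correspondingly, the generic fiber of $\pi_\lambda$ is \emph{not} the closure of a $\lambda(\Gm)$-orbit: \autoref{lem:GenFiber} gives only irreducibility and generic reducedness, and the fiber has dimension $\dim X-\dim X^\lambda$, containing whole orbits of the subtorus of $T$ whose cocharacter space spans $V(\lambda)$, as well as unipotent directions. A minimal counterexample to your criterion: $G=T=\Gm^2$, $X=\P^1\times\P^1$ (toroidal and spherical, with no colors), $\lambda=(1,1)$. Here $-\lambda^r$ lies in the interior of a two-dimensional cone of the fan, so $X^\lambda$ is a single $T$-fixed point and $\Xi(X^\lambda)=0=\Xi(X)\cap V(\lambda)^\perp$; but your criterion $\langle\chi,\lambda\rangle=0$ would yield $\Xi(X^\lambda)\supseteq\ZZ\cdot(1,-1)$, realized by the $\lambda$-invariant function $x/y$, which is visibly not constant on the fibers of $\pi_\lambda$ (the fiber over the fixed point is the whole open cell). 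To decide which $\Gm$-invariant semiinvariants are constant on fibers you must know the structure of those fibers, and this is precisely what the paper extracts from the local structure theorem \cite{InvBew}*{1.2}: a finite map $P_u\times\Aq\pfeil X$ onto a neighborhood of a closed orbit, with $\Aq$ a toric slice on which the contraction is computed explicitly, giving $\Xi_\QQ(X^\lambda)=\Xi_\QQ(\Aq^\lambda)=V(\lambda)^\perp$.

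There is a second, smaller gap: the asserted equality is one of lattices, and ``$X^\lambda$ is spherical so its semiinvariants are exactly accounted for'' does not prove the inclusion $\Xi(X)\cap V(\lambda)^\perp\subseteq\Xi(X^\lambda)$; a priori $\Xi(X^\lambda)$ could be a proper finite-index sublattice. The paper closes this by taking $\chi\in\Xi(X)\cap\Xi_\QQ(X^\lambda)$, writing $f_\chi^n=\pi_\lambda^*f_{n\chi}$ for some $n>0$, and using normality of $X$ to conclude that $f_\chi$ itself pushes down to $X^\lambda$. Your second identity (dualization) is fine. For the third identity your outline is plausible but leaves the hard step---transporting invariant valuations and their centers across $\pi_\lambda$ for \emph{all} orbits---as a sketch; note that the paper avoids this entirely by observing that a pointed fan is determined by its maximal cones, so it suffices to match closed orbits, which is done with \autoref{lem:ClosedOrbit} together with the same toric slice $\Aq^\lambda$. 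Any repair of your proof will, one way or another, have to reintroduce that local (toric) model, since the statement itself is about $V(\lambda)$ and not about $\lambda^\perp$.
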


\begin{proof}

  Let $z$ be an arbitrary ${}^-\!B$-fixed point in $X$. Then $z$
  corresponds to the complete orbit $Gz$ and therefore to a cone
  $\cC_{Gz}$ of maximal dimension in $\cF(X)$. Let $P$ be the
  parabolic which is opposite to the reduced stabilizer
  $G_z^{\|red|}$. Then the local structure theorem \cite{InvBew}*{1.2}
  asserts the existence of a normal affine $T$-variety $\Aq$ and a
  $T$-equivariant morphism $\phi_0:\Aq\pfeil X$ such that the morphism
  \begin{equation}
    \phi: P_u\times\Aq\pfeil X:(u,a)\mapsto u\phi_0(a)
  \end{equation}
  is finite onto an open neighborhood of $z$ in $X$. Moreover, the
  torus $T$ has an open orbit $A$ in $\Aq$ such that
  $\Xi_\QQ(A)=\Xi_\QQ(X)$. The embedding $A\into\Aq$ corresponds to
  the cone $\cC_{Gz}$. In particular, $\Aq$ contains a unique
  $T$-fixed point, denoted by $0$, such that $\phi_0(0)=z$.

  From this we see that $z$ lies in the source of $\lambda$ on $X$ if
  and only if $\lambda$ has a source in $P_u\times\Aq$. This is
  automatically the case for $P_u$ since $\lambda$ is dominant and
  acts by conjugation. The fixed point set is $P_u^\lambda=P_u\cap
  G^\lambda$. On the other hand, we have
  \begin{equation}
    f_\chi(\lambda(t)a)=t^{-\lambda^r(\chi)}f_\chi(a).
  \end{equation}
  For the limit when $t\pfeil0$ to exist for all $a\in\Aq$ it is
  necessary and sufficient that $-\lambda^r(\chi)\ge0$ for all
  $\chi\in\Xi_\QQ(\Aq)$ with $f_\chi\in\cO(\Aq)$. This condition boils
  down to $-\lambda^r\in\cC_{Gz}$. In that case, one readily checks
  that the fixed point set $\Aq^\lambda$ is the closure of the orbit
  corresponding to the face $\cC(\lambda)$ of $\cC_{Gz}$. The
  restricted morphism
  \begin{equation}\label{eq:41}
    \phi^\lambda:P_u^\lambda\times\Aq^\lambda\pfeil X^\lambda
  \end{equation}
  describes the local structure of $X^\lambda$ in a neighborhood of
  $z$.

  From this we already infer that
  $\Xi_\QQ(X^\lambda)=\Xi_\QQ(\Aq^\lambda)=V(\lambda)^\perp$. We claim
  that $\Xi(X^\lambda)=\Xi_\QQ(X^\lambda)\cap\Xi(X)$ which then proves
  the assertion $\Xi(X^\lambda)=\Xi(X)\cap V(\lambda)^\perp$.  In
  fact, only ``$\supseteq$'' is an issue. To prove it let
  $\chi\in\Xi_\QQ(X^\lambda)\cap\Xi(X)$. Then there are $n\in\ZZ_{>0}$
  and rational semiinvariants $f_\chi$ on $X$ and $f_{n\chi}$ on
  $X^\lambda$ such that $f_\chi^n=\pi_\lambda^*f_{n\chi}$. Let
  $X'\subseteq X^\lambda$ be the open subset on which $f_{n\chi}$ is
  regular. Then the normality of $X$ implies that $f_\chi$ is regular
  on $\pi_\lambda^{-1}(X')$. Since $f_\chi$ is also
  $\lambda$-invariant we conclude that $f_\chi$ pushes down to a
  rational function on $X^\lambda$ which shows
  $\chi\in\Xi(X^\lambda)$, as claimed.  The equality
  $N_\QQ(X^\lambda)=N_\QQ(X)/V(\lambda)$ follows immediately.

  Finally, we compute the fan $\cF(X^\lambda)$. Clearly, it suffices
  to determine its cones $\cC$ of maximal dimension corresponding to
  closed orbits. \autoref{lem:ClosedOrbit} and the discussion above
  show that the closed $G^\lambda$-orbits in $X^\lambda$ correspond
  precisely to those closed $G$-orbits $Gz$ in $X$ such that
  $-\lambda^r\in\cC_{Gz}$. In that case it is easy to check that the
  toroidal embedding $A^\lambda\into\Aq^\lambda$ corresponds to the
  cone $(\cC_{Gz}+\QQ_{\ge0}\lambda)/V(\lambda)$. But these are
  precisely the cones of maximal dimension in $\cF^\lambda$ which
  shows $\cF(X^\lambda)=\cF(X)^\lambda$.
\end{proof}

\begin{corollary}

  Let $\lambda$ and $X$ be as above. Then
  \begin{equation}
    \Sigma(X^\lambda)=\Sigma(X)\cap V(\lambda)^\perp=\Sigma(X)\cap\lambda^\perp.
  \end{equation}

\end{corollary}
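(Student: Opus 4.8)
The plan is to reduce everything to a duality between the valuation cone $\cV(X^\lambda)$ and the cone $-\cV(X)^\vee$ whose extremal rays carry the spherical roots. The preceding theorem gives $N_\QQ(X^\lambda)=N_\QQ(X)/V(\lambda)$ together with $\cF(X^\lambda)=\cF(X)^\lambda$, and passing to supports identifies $\cV(X^\lambda)$ with the image in $N_\QQ(X)/V(\lambda)$ of the support of $\tilde\cF^\lambda$. First I would check that this support is exactly the tangent cone of $\cV(X)$ at the point $v_0:=-\lambda^r$. Since $v_0$ lies in the relative interior of $\cC(\lambda)$, the cones of $\cF$ containing $v_0$ are precisely those having $\cC(\lambda)$ as a face, and for each such cone $\cC$ one has $\cC+\QQ_{\ge0}\lambda^r=\cC+V(\lambda)$, which is the tangent cone of $\cC$ at $v_0$; their union is the tangent cone $T$ of $\cV(X)$ at $v_0$. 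Hence $\cV(X^\lambda)=T/V(\lambda)$.

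Next I would dualize. By dominance of $\lambda$ we have $v_0\in\cV(X)$, so $v_0^\perp$ supports $-\cV(X)^\vee$ and therefore $(-\cV(X)^\vee)\cap v_0^\perp$ is a genuine face of $-\cV(X)^\vee$. A short computation with the tangent cone (using that $V(\lambda)$ is contained in $T$ and that an element of $\cV(X)^\vee$ vanishing at the relative interior point $v_0$ lies automatically in $T^\vee$) shows that the dual of $T/V(\lambda)$, taken inside $(N_\QQ(X)/V(\lambda))^*=V(\lambda)^\perp$, is exactly this exposed face; concretely,
\begin{equation*}
  -\cV(X^\lambda)^\vee=(-\cV(X)^\vee)\cap v_0^\perp=(-\cV(X)^\vee)\cap\lambda^\perp,
\end{equation*}
using $v_0^\perp=(\lambda^r)^\perp=\lambda^\perp$. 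The only structural input here is that $v_0^\perp$ is supporting, which is where the dominance hypothesis is consumed.

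Now the decisive point is the standard fact that a face $F$ of a convex cone $C$ satisfies $\operatorname{Ext}(F)=\operatorname{Ext}(C)\cap F$: the extremal rays of $F$ are exactly the extremal rays of $C$ lying on it. Applied to $C=-\cV(X)^\vee$ and $F=C\cap\lambda^\perp$, this says that the extremal rays of $-\cV(X^\lambda)^\vee$ are precisely those extremal rays of $-\cV(X)^\vee$ contained in $\lambda^\perp$, so rays match with rays. It then remains to compare the chosen primitive generators. For this I would use $\Xi(X^\lambda)=\Xi(X)\cap V(\lambda)^\perp$ from the theorem, which yields $\ZZ S\cap\Xi_p(X^\lambda)=\ZZ S\cap\Xi_p(X)\cap\lambda^\perp$; a spherical root $\sigma\in\Sigma(X)$ lying in $\lambda^\perp$ is already contained in this smaller lattice, and being primitive in the larger one it stays primitive in the smaller one. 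Hence the spherical roots on matching rays literally coincide, giving $\Sigma(X^\lambda)=\Sigma(X)\cap\lambda^\perp$.

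Finally, for the remaining equality $\Sigma(X)\cap\lambda^\perp=\Sigma(X)\cap V(\lambda)^\perp$, I would observe that $V(\lambda)^\perp\subseteq\lambda^\perp$ since $\lambda^r\in V(\lambda)$, so one inclusion is trivial; conversely, if $\sigma\in\Sigma(X)$ with $\sigma\in\lambda^\perp$ then $-\sigma\in\cV(X)^\vee$ vanishes at the relative interior point $v_0$ of $\cC(\lambda)$, hence (being nonnegative on $\cC(\lambda)$) vanishes on all of $\cC(\lambda)$ and thus on $V(\lambda)=\langle\cC(\lambda)\rangle_\QQ$, so $\sigma\in V(\lambda)^\perp$. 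I expect the main obstacle to be the first paragraph, namely matching the support of the localized fan $\cF(X)^\lambda$ with the tangent cone $T$; this is the one step that genuinely exploits the fan description supplied by the theorem rather than purely formal cone duality, and care is needed to see that the cones of $\cF$ meeting the relative interior of $\cC(\lambda)$ assemble into exactly that tangent cone.
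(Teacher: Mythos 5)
Your proof is correct and is in essence the paper's own argument in dual packaging: both rest on the theorem's identifications $\cF(X^\lambda)=\cF(X)^\lambda$ and $\Xi(X^\lambda)=\Xi(X)\cap V(\lambda)^\perp$, on the correspondence between spherical roots and codimension-$1$ faces of $\cV(X)$ (equivalently, extremal rays of $-\cV(X)^\vee$, which is where you work via the tangent cone and the exposed face), and on the convexity observation that a spherical root vanishing at the relative interior point $-\lambda^r$ of $\cC(\lambda)$ must vanish on all of $V(\lambda)$. One local misstatement to repair: the asserted equality $\ZZ S\cap\Xi_p(X^\lambda)=\ZZ S\cap\Xi_p(X)\cap\lambda^\perp$ does not follow from the theorem and is false for general lattice elements (only $V(\lambda)^\perp$ in place of $\lambda^\perp$ is correct), but since you apply it only to spherical roots, and your final paragraph proves exactly that a spherical root in $\lambda^\perp$ lies in $V(\lambda)^\perp$, the argument is complete once that paragraph is placed before the primitivity comparison.
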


\begin{proof}

  The valuation cone $\cV(X^\lambda)$ equals the support of
  $\cF(X)^\lambda$. Its codimension-1-faces are, by construction, the
  codimension-1-faces of $\cV(X)$ which contain $\cC(\lambda)$. From
  $\Xi_p(X^\lambda)=\Xi_p(X)\cap V(\lambda)^\perp$ we get
  $\Sigma(X^\lambda)=\Sigma(X)\cap V(\lambda)^\perp$. The second
  equality follows from the fact that $\<\sigma,\cV(X)\>\ge0$ for all
  $\sigma\in\Sigma(X)$. Hence $\<\sigma,V(\lambda)\>=0$ if and only if
  $\<\sigma,\cC(\lambda)\>=0$ if and only if $\<\sigma,\lambda\>=0$.
\end{proof}

\begin{proposition}\label{prop:GXLambda}

  Let $\lambda$ and $X$ be as above. Then $GX^\lambda=\overline Y_0$
  where $Y_0\subseteq X$ is the $G$--orbit with
  $\cC_{Y_0}=\cC(\lambda)$. Moreover, for any $G$-orbit $Y\subseteq X$
  holds
  \begin{eqnarray}
    \cC(\lambda)\subseteq\cC_Y&\Longleftrightarrow&X^\lambda\cap Y\ne\leer\\
    \cC(\lambda)\supseteq\cC_Y&\Longleftrightarrow&X^\lambda\subseteq\overline Y
  \end{eqnarray}

\end{proposition}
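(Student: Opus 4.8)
The plan is to reduce all three assertions to the single statement that, for every $G$-orbit $Y\subseteq X$,
\[
  Y\cap X^\lambda\ne\leer \iff -\lambda^r\in\cC_Y \iff \text{$\cC(\lambda)$ is a face of $\cC_Y$};
\]
call this $(\ast)$. Before proving it I would record two inputs. By the Luna--Vust description of the toroidal embedding $X$ through its colored fan $(\cF,\leer)$ (see \cite{Hyd}), the map $Y\mapsto\cC_Y$ is an inclusion-reversing bijection between $G$-orbits of $X$ and cones of $\cF$, characterised by $Y'\subseteq\overline Y\Leftrightarrow\cC_Y\subseteq\cC_{Y'}$ (as a face); in particular $\overline Y=\bigcup_{\cC_Y\subseteq\cC_{Y'}}Y'$. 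Second, since $\cC(\lambda)$ is by definition the cone whose relative interior contains $-\lambda^r$, and since in a fan a point lies in the relative interior of a unique cone, one has $-\lambda^r\in\cC_Y$ exactly when $\cC(\lambda)$ is a face of $\cC_Y$; this is the second equivalence in $(\ast)$. Writing $Y_0$ for the orbit with $\cC_{Y_0}=\cC(\lambda)$, the dictionary reads $\cC(\lambda)\subseteq\cC_Y\Leftrightarrow Y\subseteq\overline{Y_0}$ and $\cC_Y\subseteq\cC(\lambda)\Leftrightarrow Y_0\subseteq\overline Y$.

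Granting $(\ast)$, the proposition follows formally. Each point of $GX^\lambda$ lies on an orbit meeting $X^\lambda$ and conversely, so $GX^\lambda=\bigcup_{Y\cap X^\lambda\ne\leer}Y$; by $(\ast)$ and the dictionary this equals $\bigcup_{Y\subseteq\overline{Y_0}}Y=\overline{Y_0}$, which is the first assertion. The displayed equivalence $\cC(\lambda)\subseteq\cC_Y\Leftrightarrow X^\lambda\cap Y\ne\leer$ is just $(\ast)$, the left side being the third clause there. For the second equivalence, $\overline Y$ is closed and $G$-stable, so
\[
  X^\lambda\subseteq\overline Y
  \Leftrightarrow GX^\lambda\subseteq\overline Y
  \Leftrightarrow \overline{Y_0}\subseteq\overline Y
  \Leftrightarrow Y_0\subseteq\overline Y
  \Leftrightarrow \cC_Y\subseteq\cC(\lambda),
\]
where the middle steps use the first assertion and the dictionary.

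It remains to prove $(\ast)$, which is the only geometric point and the main obstacle; here I would reuse the local structure set up in the proof of the preceding theorem. For ``$\Leftarrow$'', assume $\cC(\lambda)\subseteq\cC_Y$ and pick a closed $G$-orbit $Gz\subseteq\overline Y$ with $z$ a ${}^-\!B$-fixed point, so that $\cC_{Gz}$ is maximal and $\cC(\lambda)\subseteq\cC_Y\subseteq\cC_{Gz}$. Then $-\lambda^r\in\cC_{Gz}$, hence $z\in X^\lambda$ and the finite chart $\phi\colon P_u\times\Aq\pfeil X$ applies, with $\Aq^\lambda$ the closure of the $T$-orbit attached to the face $\cC(\lambda)$. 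Since $\cC(\lambda)\subseteq\cC_Y\subseteq\cC_{Gz}$, the $T$-orbit attached to $\cC_Y$ lies in $\Aq^\lambda$, and its image under $\phi_0$ lies in $Y\cap X^\lambda$. For ``$\Rightarrow$'', given $x\in Y\cap X^\lambda$ I would pass to a point $gx$ with $g\in G^\lambda$ lying in the chart around the ${}^-\!B^\lambda$-fixed point $z$ of a closed $G^\lambda$-orbit in $\overline{G^\lambda x}$ (possible since $G^\lambda x$ is dense there and the chart is open). Writing $gx=\phi(u,a)$ with $(u,a)\in P_u^\lambda\times\Aq^\lambda$ --- the local description of $X^\lambda$ furnished by $\phi^\lambda$ --- the condition $gx\in Y$ forces $a$ into the $T$-orbit attached to $\cC_Y$, so that orbit meets, hence lies in, $\Aq^\lambda$; this yields $\cC(\lambda)\subseteq\cC_Y$. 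The two facts that make the argument work, namely that $\phi(u,a)\in X^\lambda$ near $z$ precisely when $(u,a)\in P_u^\lambda\times\Aq^\lambda$, and that the $T$-orbits of $\Aq$ match the $G$-orbits they map into through their cones, are exactly what the finiteness of $\phi$ and the chart $\phi^\lambda$ established in that proof provide.
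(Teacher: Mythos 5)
Your argument is correct, but it takes a genuinely different route from the paper's. The paper's proof turns on a global fact that you never use: $X^\lambda$ is stable under the opposite Borel ${}^-\!B={}^-\!B^\lambda\,{}^-\!G^u_\lambda$ (by the lemma preceding \autoref{lem:ClosedOrbit}), so $GX^\lambda$, being the $G$-saturation of a closed irreducible ${}^-\!B$-stable set, is automatically closed by completeness of $G/{}^-\!B$, hence is an orbit closure $\overline{Y_0}$ from the start. With that in hand, the paper needs only one local computation --- the chart \eqref{eq:41} at a single closed orbit inside $\overline{Y_0}$ --- to identify $\cC_{Y_0}=\cC(\lambda)$, and both displayed equivalences then drop out of the Luna--Vust dictionary exactly as in your ``formal'' paragraph. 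You instead prove the orbit-wise criterion $(\ast)$ directly in both directions and obtain the closedness of $GX^\lambda$ as a corollary rather than as an input. The cost is your ``$\Rightarrow$'' direction, which has no counterpart in the paper: you must manufacture a usable chart via \autoref{lem:ClosedOrbit} (a ${}^-\!B^\lambda$-fixed point of a closed $G^\lambda$-orbit in $\overline{G^\lambda x}$ is ${}^-\!B$-fixed, hence lies on a closed $G$-orbit) and then translate $x$ into that chart by density of $G^\lambda x$. The gain is a purely local, orbit-by-orbit proof that is independent of the flag-variety completeness trick. One point worth making explicit, since both of your directions lean on it: by finiteness of $\phi$, a $\lambda$-fixed point of $X$ in the chart pulls back to $\lambda$-fixed points of $P_u\times\Aq$, and, given $-\lambda^r\in\cC_{Gz}$, the \emph{entire} fixed-point locus of $\Aq$ equals $\Aq^\lambda$, the closure of the $T$-orbit attached to $\cC(\lambda)$ (if a face $F$ of $\cC_{Gz}$ satisfies $\lambda^r\in\<F\>_\QQ$, then $-\lambda^r\in\<F\>_\QQ\cap\cC_{Gz}=F$, so $F\supseteq\cC(\lambda)$). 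This is what makes your step ``$gx=\phi(u,a)$ forces $(u,a)\in P_u^\lambda\times\Aq^\lambda$'' valid on the whole chart and not merely near $z$, and it is precisely the assertion ``the fixed point set $\Aq^\lambda$ is the closure of the orbit corresponding to the face $\cC(\lambda)$'' in the proof of the preceding theorem, so your appeal to that proof is legitimate.
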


\begin{proof}

  First note that $X^\lambda$ is stable under
  ${}^-\!B={}^-\!B^\lambda{}^-\!G^u_\lambda$. Hence $GX^\lambda$ is
  closed, hence an orbit closure $\overline{Y_0}$, in $X$. Choose a
  closed orbit $Gz$ in $\overline{Y_0}$. The orbits of $X$ which
  contain $Gz$ in their closure correspond precisely to the $T$-orbits
  in the slice $\Aq$. It follows from \eqref{eq:41} that this way
  $\overline{Y_0}$ corresponds to $\Aq^\lambda$ which shows
  $\cC_{Y_0}=\cC(\lambda)$, as claimed.

  The two equivalences follow easily: we have $X^\lambda\cap
  Y\ne\leer$ if and only if $Y\subseteq GX^\lambda=\overline{Y_0}$ if
  an only if $\cC_Y\supseteq\cC(\lambda)$ and
  $X^\lambda\subseteq\overline Y$ if and only if
  $\overline{Y_0}=GX^\lambda\subseteq\overline Y$ if and only if
  $\cC(\lambda)\supseteq\cC_Y$.
\end{proof}

Next we compute the colors of $X^\lambda$. Let $D\subseteq X^\lambda$
be a color and let $D_0$ be its restriction to the open
$G^\lambda$-orbit. Then $\pi_\lambda^{-1}(D_0)$ is irreducible by
\autoref{lem:GenFiber}. Hence its closure $D^*\subset X$ is a
$B$-stable prime divisor.

\begin{proposition}\label{prop:ColorsX}

  Let $\lambda$ and $X$ be as above.

  \begin{enumerate}

  \item\label{it:ColorsX.a} Let $\a\in
    S^\lambda:=S(G^\lambda)=S\cap\lambda^\perp$ and let $D$ be a color
    of $X^\lambda$. Then

    \begin{enumerate}

    \item\label{it:ColorsX.i} $\a$ has the same type for $X^\lambda$
      as it has for $X$,

    \item\label{it:ColorsX.ii} $\delta_D$ is the restriction of
      $\delta_{D^*}$ to $\Xi_\QQ(X^\lambda)\subseteq\Xi_\QQ(X)$, and

    \item\label{it:ColorsX.iii} $D$ is moved by $\a$ if and only if
      $D^*$ is moved by $\a$. In that case $q_{D,\a}=q_{D^*,\a}$.

    \end{enumerate}

  \item\label{it:ColorsX.b} The map $D\mapsto D^*$ is a bijection
    between the set of colors of $X^\lambda$ and the set of colors of
    $X$ which are moved by some $\a\in S^\lambda$.

  \end{enumerate}

\end{proposition}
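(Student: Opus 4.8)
The plan is to reduce all four assertions to the rank-one situation attached to $\a$, exploiting that for $\a\in S^\lambda$ the cocharacter $\lambda$ is central in the Levi of $P_\a$. Fix the base point $x_0$ with open $B$-orbit, put $\bar x_0:=\pi_\lambda(x_0)$, and write $H_\a:=(P_\a)_{x_0}$ and $\bar H_\a:=(P_\a)_{\bar x_0}$, so that $P_\a x_0=P_\a/H_\a$ and $P_\a\bar x_0=P_\a/\bar H_\a$ are single $P_\a$-orbits, open in $X$ and $X^\lambda$ respectively. Since $\a\in S^\lambda=S\cap\lambda^\perp$ we have $\<\a,\lambda\>=0$, hence $P_\a\subseteq G^\lambda$ and $\lambda(\Gm)\subseteq T$ is central in the Levi $L_\a$ of $P_\a$; in particular $\lambda(\Gm)$ lies in the kernel $K_\a$ of the natural map $\rho\colon P_\a\auf\PGL(2)=\|Aut|(B\backslash P_\a)$. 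As $\lambda(\Gm)$ commutes with $P_\a$, for $g\in P_\a$ one has $\pi_\lambda(gx_0)=\lim_{t\pfeil0}\lambda(t)gx_0=g\,\bar x_0$, so $\pi_\lambda$ restricts to the natural projection $P_\a/H_\a\pfeil P_\a/\bar H_\a$.

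The key step, which yields both (i) and the orbit correspondence underlying (iii) and the bijection in (b), is the identity
\begin{equation}
  \Hq_\a=\rho(\bar H_\a),
\end{equation}
where $\Hq_\a=\rho(H_\a^{\|red|})$. First I would form the tautological $P_\a$-equivariant morphism $\psi\colon P_\a/H_\a\pfeil\PGL(2)/\Hq_\a$ sending $x_0$ to the base coset. Since $\lambda(\Gm)\subseteq K_\a$ acts trivially on $\PGL(2)/\Hq_\a$, the map $\psi$ is $\lambda(\Gm)$-invariant; therefore $\psi(\bar x_0)=\lim_{t\pfeil0}\psi(\lambda(t)x_0)=\psi(x_0)$ is again the base coset. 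Consequently $\bar H_\a=(P_\a)_{\bar x_0}\subseteq\rho^{-1}(\Hq_\a)=K_\a H_\a$, whence $\rho(\bar H_\a)\subseteq\Hq_\a$; the reverse inclusion is clear from $H_\a\subseteq\bar H_\a$. Reading off the type from the conjugacy class of $\Hq_\a$ in \eqref{eq:2} now gives (i). Moreover, as subgroups of $\PGL(2)$ the images $\Hq_\a$ and $\rho(\bar H_\a^{\|red|})$ coincide, so they have the same orbits on $B\backslash P_\a\cong\P^1$; hence $\pi_\lambda$ induces a codimension-preserving bijection between the $B$-orbits of $P_\a x_0$ and those of $P_\a\bar x_0$. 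In particular the colors moved by $\a$ correspond, and $D$ is moved by $\a$ if and only if $D^*$ is.

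For the statement (ii) about $\delta_D$ I would argue with semiinvariants, independently of the above. Fix $\chi\in\Xi(X^\lambda)=\Xi(X)\cap V(\lambda)^\perp$ (by the preceding theorem), so that the semiinvariant $f_\chi$ on $X^\lambda$ pulls back to the weight-$\chi$ semiinvariant $\pi_\lambda^*f_\chi$ on $X$, using the identity $f_\chi^n=\pi_\lambda^*f_{n\chi}$ established in the proof of that theorem. Because $D^*=\overline{\pi_\lambda^{-1}(D_0)}$ dominates $D$ with irreducible, generically reduced fibres (\autoref{lem:GenFiber}), the ramification index of $v_{D^*}$ over $v_D$ is $1$, so
\begin{equation}
  \delta_{D^*}(\chi)=v_{D^*}(\pi_\lambda^*f_\chi)=v_D(f_\chi)=\delta_D(\chi),
\end{equation}
which is (ii). The remaining equality $q_{D,\a}=q_{D^*,\a}$ in (iii) then follows by comparing \autoref{prop:ColorRelations} for $X$ and $X^\lambda$: restricting $q_{D^*,\a}\delta_{D^*}=\a^r$ (type \tB/) or $=\frac12\a^r$ (type \tAA/) to $\Xi_\QQ(X^\lambda)$ and using (ii) together with $\a^r_{X^\lambda}=\a^r_X|_{\Xi_\QQ(X^\lambda)}\ne0$, and in the type \tA/ case comparing the normalizations $q_{\bullet,\a}\delta_\bullet(\a)=1$ (legitimate since $\a\in\Xi_p(X^\lambda)$ by (i)); alternatively it drops out of the isotropy description above, the passage from $H_\a$ to $\bar H_\a\subseteq K_\a H_\a$ inserting only the smooth central factor $\lambda(\Gm)$.

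Finally, (b) is bookkeeping. Every color $D$ of $X^\lambda$ is moved by some $\a\in S^\lambda$, so by (iii) the prime divisor $D^*$ is a color of $X$ moved by the same $\a$; thus $D\mapsto D^*$ lands in the asserted set, and it is injective because $D=\overline{\pi_\lambda(D^*\cap X_\lambda)}$. For surjectivity, a color $E$ of $X$ moved by some $\a\in S^\lambda$ meets $P_\a x_0\subseteq X_\lambda$ in a codimension-one $B$-orbit, which the bijection of the second step carries to a codimension-one $B$-orbit of $P_\a\bar x_0$; its closure is a color $D$ of $X^\lambda$ with $D^*=E$, since $E\cap X_\lambda$ and $\pi_\lambda^{-1}(D_0)$ are irreducible of the same dimension. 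The step I expect to be the crux is the identity $\rho(\bar H_\a)=\Hq_\a$: it is exactly where the orthogonality $\a\perp\lambda$ enters (through the $\lambda$-invariance of $\psi$), and one must take care, in positive characteristic, that the passage between $H_\a$ and $\bar H_\a$ respects the non-reduced structure so that the degrees $q_{D,\a}$ are genuinely preserved.
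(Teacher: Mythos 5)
Your reduction to the rank-one picture founders at precisely the step you flag as the crux, and the gap is not a technicality. The point $\bar x_0=\pi_\lambda(x_0)$ does \emph{not} lie in the domain of $\psi$: the map $\psi$ is defined only on the open orbit $P_\a x_0\subseteq Gx_0$, whereas $\bar x_0\in X^\lambda$, and by \autoref{prop:GXLambda} one has $GX^\lambda=\overline{Y_0}$ with $\cC_{Y_0}=\cC(\lambda)$; so whenever $\lambda^r\ne0$ (the typical case for a nontrivial localization) $X^\lambda$ lies entirely in the boundary $X\setminus Gx_0$. Hence the equation $\psi(\bar x_0)=\lim_{t\pfeil0}\psi(\lambda(t)x_0)$ is meaningless as written: the constancy of $\psi$ along $\lambda(\Gm)x_0$ tells you nothing about the stabilizer of the limit point unless $\psi$ extends to a $P_\a$-equivariant morphism at $\bar x_0$, and producing such an extension across the boundary is essentially equivalent to what you are trying to prove (it is exactly the service that the contraction $\pi_\lambda$ renders). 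Without it, the inclusion $\rho(\bar H_\a)\subseteq\Hq_\a$ --- i.e.\ that the isotropy image in $\PGL(2)$ does not jump in the limit --- is unproven, and ruling out such jumps is the whole content of part \textit{a)}. Since your proofs of \textit{i)}, of the orbit correspondence, of $q_{D,\a}=q_{D^*,\a}$, and of \textit{b)} all rest on this identity, they collapse with it; only your argument for \textit{ii)} stands, and it coincides with the paper's own (reducedness of $\pi_\lambda^{-1}(D_0)$ via \autoref{lem:GenFiber}).

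Two further slips in the setup, repairable but worth noting: $\<\a,\lambda\>=0$ does \emph{not} give $P_\a\subseteq G^\lambda$, only $P_\a\subseteq G_\lambda$ (the unipotent radical of $B$ is not centralized by $\lambda$), and correspondingly $\pi_\lambda(gx_0)=\pi_G(g)\bar x_0$, not $g\bar x_0$; what \emph{is} true and is the genuine use of $\a\perp\lambda$ is $\lambda(\Gm)\subseteq K_\a=\ker\rho$. The paper's proof is instructive by contrast: it never evaluates anything at a limit point of the open orbit, but works with the globally defined morphism $\pi_\lambda:X_\lambda\pfeil X^\lambda$, deducing the ``moved by $\a$'' correspondence from $P_\a\subseteq G_\lambda$ and equivariance, extracting both $q_{D,\a}=q_{D^*,\a}$ and the equality of separable degrees (hence the type-\tAA/ dichotomy) from the cartesian square relating $P_\a\times^B\pi_\lambda^{-1}(D_0)\pfeil X_\lambda$ to $P_\a^\lambda\times^{B^\lambda}D\pfeil X^\lambda$, and then separating types \tP/, \tB/, \tA/ simply by counting the colors moved by $\a$. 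If you want to salvage your isotropy-theoretic approach, you would have to prove that $\psi$ descends through $\pi_\lambda$ (equivalently, is constant on its fibers) so that it induces a morphism defined at $\bar x_0$; that is a real argument, not a passage to the limit.
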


\begin{proof} The first part of \ref{it:ColorsX.iii} follows from
  $P_\a\subseteq G_\lambda$ and the equivariance of
  $\pi_\lambda$. Then \ref{it:ColorsX.b} is an immediate consequence.
  For \ref{it:ColorsX.ii} recall that $\pi_\lambda^{-1}(D_0)$ is even
  a reduced divisor by \autoref{lem:GenFiber}. Thus,
  $v_{D^*}(\pi_\lambda^*f)=v_D(f)$ for all $f\in
  k(X^\lambda)$. Moreover,there is a commutative diagram
  \begin{equation}
    \cxymatrix{P_\a\times^BD^*\ar[r]^(.6){\phi_{D^*,\a}}&X\\
      P_\a\times^B\pi_\lambda^{-1}(D_0)\ar[d]\ar[r]\ar@{_{(}->}[u]&
      X_\lambda\ar[d]^{\pi_\lambda}\ar@{_{(}->}[u]\\
      P_\a\times^BD_0\ar[r]\ar@{_{(}->}[d]&X^\lambda\ar@{=}[d]\\
      P_\a^\lambda\times^{B^\lambda}D\ar[r]\ar[r]^(.6){\phi_{D,\a}^\lambda}&X^\lambda}
  \end{equation}
  where the middle square is cartesian and the injections are open
  embeddings. It follows that $\phi_{D,\a}^\lambda$ and
  $\phi_{D^*,\a}$ have the same inseparable degree, showing the second
  part of \ref{it:ColorsX.iii}. Both morphisms also have the same
  separable degree ($1$ or $2$). Thus, $\a$ is of type \tAA/ for
  $X^\lambda$ if and only if it is of type \tAA/ for $X$, proving part
  of \ref{it:ColorsX.i}. The other types \tP/, \tB/, and \tA/ are
  distinguished by the number of colors ($0$, $1$, and $2$,
  respectively) moved by $\a$. Thus, the rest of \ref{it:ColorsX.i}
  follows from \ref{it:ColorsX.iii}.
\end{proof}

\section{The interrelation of roots and colors}
\label{sec:RootsAndColors}

In practice, the 1-parameter subgroup $\lambda$ has to be chosen
diligently.

\begin{lemma}\label{lem:1PSG}

  Let $X$ be a complete toroidal spherical $G$-variety and
  $S'\subseteq S$. Then there is a 1-parameter subgroup $\lambda$ such
  that

  \begin{enumerate}

  \item\label{it:1PSG.a} $S(G^\lambda)=S'$ and

  \item\label{it:1PSG.b} the connected center of $G^\lambda$ acts
    trivially on $X^\lambda$.

  \end{enumerate}

\end{lemma}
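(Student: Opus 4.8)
The plan is to take $\lambda$ inside the dominant chamber cutting out the Levi with simple roots $S'$, and then to move $\lambda$ to a sufficiently generic position so that the source $X^\lambda$ loses all central directions. Write $\tau_{S'}\subseteq N_\QQ(T)$ for the face of the dominant Weyl chamber on which every $\a\in S'$ vanishes, and let $L:=\|span|\tau_{S'}=\{\mu\in N_\QQ(T)\mid\<\a,\mu\>=0\text{ for all }\a\in S'\}=N_\QQ(Z(G^\lambda)^\circ)$ be its linear span. For any $\lambda$ in the relative interior of $\tau_{S'}$ one has $\<\a,\lambda\>=0$ for $\a\in S'$ and $\<\a,\lambda\>>0$ otherwise, so that $S^\lambda=S\cap\lambda^\perp=S'$; this is exactly requirement \ref{it:1PSG.a}, and such a $\lambda$ is automatically dominant, which is what the localization construction requires. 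Clearing denominators, $\lambda$ may be taken integral. Thus the whole problem is to locate $\lambda$ within $\|relint|\tau_{S'}$ so that \ref{it:1PSG.b} holds as well.

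First I would restate \ref{it:1PSG.b} in fan-theoretic terms. Let $\|proj|:N_\QQ(T)\auf N_\QQ(X)$ be the projection dual to $\Xi(X)\into\Xi(T)$, and set $L^r:=\|proj|(L)$; then $\lambda^r=\|proj|(\lambda)$ and $L^r=\|span|\|proj|(\tau_{S'})$. Since $X^\lambda$ is a spherical $G^\lambda$-variety, its connected center $Z(G^\lambda)^\circ$ acts trivially if and only if the weight lattice $\Xi(X^\lambda)$ annihilates the cocharacters of $Z(G^\lambda)^\circ$, i.e. $\Xi_\QQ(X^\lambda)\perp L$ (a central torus of a spherical variety acts trivially precisely when it acts trivially on $B^\lambda$-semiinvariants). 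By the theorem above, $\Xi_\QQ(X^\lambda)=V(\lambda)^\perp$ inside $\Xi_\QQ(X)$; transporting the pairing through $\|proj|$, via $\<\chi,\mu\>=\<\chi,\|proj|\mu\>$ for $\chi\in\Xi_\QQ(X)$, turns $\Xi_\QQ(X^\lambda)\perp L$ into $V(\lambda)^\perp\perp L^r$, which (as $V(\lambda)$ is a subspace of the finite-dimensional $N_\QQ(X)$) is equivalent to
\[
  L^r\subseteq V(\lambda)=\<\cC(\lambda)\>_\QQ .
\]
Since $-\lambda^r\in\|relint|\cC(\lambda)$ we have $V(\lambda)=\|span|\cC(\lambda)$, so \ref{it:1PSG.b} amounts to choosing $\lambda$ with $-\lambda^r$ in the relative interior of a cone of $\cF(X)$ whose span contains $L^r$.

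It remains to produce such a $\lambda$, and this combinatorial step is the real content. Let $\mathcal R:=\|proj|(-\|relint|\tau_{S'})$; it is a relatively open cone in $N_\QQ(X)$ with $\|span|\mathcal R=L^r$, and dominance gives $\mathcal R\subseteq\cV(X)=\bigcup_{\cC\in\cF(X)}\cC$. Because the fan is finite and $\mathcal R$ is open in $L^r$, some cone meets $\mathcal R$ in dimension $\dim L^r$; among all $\cC\in\cF(X)$ with $\dim(\cC\cap\mathcal R)=\dim L^r$ I choose one of minimal dimension. For this $\cC$ I claim $\|relint|\cC\cap\mathcal R\ne\leer$: otherwise $\cC\cap\mathcal R$ is contained in the union of the proper faces of $\cC$, and since $\mathcal R$ is open in $L^r$ one of these faces $\cC'$ already satisfies $\dim(\cC'\cap\mathcal R)=\dim L^r$, contradicting minimality of $\dim\cC$. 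Picking a rational $-\lambda^r$ in $\|relint|\cC\cap\mathcal R$ then forces $\cC(\lambda)=\cC$, while $\|span|\cC\supseteq\|span|(\cC\cap\mathcal R)=L^r$; lifting $-\lambda^r$ back to an integral $\lambda\in\|relint|\tau_{S'}$ finishes the construction. The main obstacle is precisely this last placement: one cannot in general push $-\lambda^r$ into a maximal cone of $\cF(X)$, since the reachable locus $\mathcal R$ spans only $L^r$, usually a proper subspace of $N_\QQ(X)$, so the span condition must be secured by the minimal-cone argument rather than by naive genericity.
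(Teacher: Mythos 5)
Your proof is correct and takes essentially the same route as the paper: pick $\lambda$ in the relative interior of the dominant-chamber face cut out by $S'$ (giving \textit{a)}), translate \textit{b)} via the localization theorem into the condition that the projection of $\operatorname{span}\tau_{S'}$ lies in $V(\lambda)=\langle\cC(\lambda)\rangle_\QQ$, and then force this by a genericity/minimality argument in the fan $\cF(X)$. The only difference is bookkeeping: the paper restricts $\cF$ to the projected face, takes $\lambda$ off the lower-dimensional induced cones and passes to the minimal cone of $\cF$ containing the resulting full-dimensional one, whereas you pick a cone of minimal dimension meeting the projected face in full dimension and show its relative interior meets that face --- the same idea in a slightly different order.
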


\begin{proof}

  Let $F\subseteq N_\QQ(T):=\|Hom|(\Xi(T),\QQ)$ be the open face of
  the Weyl chamber defined by $\a=0$ for all $\a\in S'$ and $\a>0$ for
  $\a\in S\setminus S'$. Then $\lambda\in F$ is equivalent to
  \ref{it:1PSG.a} (such $\lambda$ are called \emph{adapted} to $S'$).

  Now consider the projection $\pi:N_\QQ(T)\auf N_\QQ(X)$. Since
  $\pi(F)\subseteq\cV(X)$, the fan $\cF$ associated to $X$ induces a
  complete fan $\cF'$ on $\pi(F)$. Let
  $F^0:=F\setminus\bigcup\pi^{-1}(\cC)$ where $\cC$ runs through all
  $\cC\in\cF'$ with $\|dim|\cC<\|dim|\pi(F)$ which is obviously a
  dense open subset of $F$. We claim that $\lambda\in F^0$ ensures the
  second property \ref{it:1PSG.b}.

  Indeed, $\pi(\lambda)$ lies by construction in the relative interior
  of a maximal dimensional cone $\cC'$ of $\cF'$. This implies that
  $\pi(F)\subseteq V'$ where $V'$ is the span of $\cC'$. Now let
  $\cC\in\cF$ be minimal with $\cC'\subseteq\cC$. Then $\pi(\lambda)$
  is also in the relative interior of $\cC$. Thus, the subspace $V$
  spanned by $\cC$ contains $\pi(F)$. Hence
  $\<F\>_\QQ\subseteq\pi^{-1}(V)$ which implies \ref{it:1PSG.b}.
\end{proof}

If the fan is changed one can do better.

\begin{lemma}\label{lem:1PSG2}

  Let $X_0$ be a homogeneous spherical $G$-variety and $S'\subseteq
  S$. Then there is a $1$-parameter subgroup $\lambda$ and a toroidal
  compactification $X$ of $X_0$ such that

  \begin{enumerate}

  \item\label{it:1PSG2.a} $S(G^\lambda)=S'$ and

  \item\label{it:1PSG2.b}
    $\Xi_\QQ(X^\lambda)=\<\Sigma(X^\lambda)\>_\QQ$.

  \end{enumerate}

\end{lemma}

\begin{proof}

  Same construction as above but this time we choose $\cF$ such that
  $\|dim|\cC$ is as large as possible, i.e., the dimension of the
  smallest face of $\cV$ which contains $\cC'$. This means precisely
  \ref{it:1PSG2.b}.
\end{proof}

\begin{remark}

  A rather trivial application of the last lemma is when $S'=S$. Then
  $X^\lambda$ has the same roots and colors as $X_0$ but
  $\Xi_\QQ(X^\lambda)$ is spanned by $\Sigma(X)$.

\end{remark}

Following Luna~\cite{LuGC}, an important application of this technique
is:

\begin{proposition}\label{prop:RootColors}

  Let $\a\in S$ be a simple root.

  \noindent If $p\ne2$ then

  \begin{enumerate}

  \item\label{it:RootColors.a} $\a\in\Sigma(X)$ if and only if $\a$ is
    of type \tA/. Thus $S^\tA/=S\cap\Sigma(X)$.

  \item\label{it:RootColors.b} $2\a\in\Sigma(X)$ if and only if $\a$
    is of type \tAA/. Thus
    $S^\tAA/=S\cap{\textstyle{\frac12}}\Sigma(X)$.

  \end{enumerate}

  If $p=2$ then

  \begin{enumerate}[resume]

  \item\label{it:RootColors.c} $\a\in\Sigma(X)$ if and only if $\a$ is
    of type \tA/ or \tAA/. Thus $S^\tA/\cup S^\tAA/=S\cap\Sigma(X)$.

  \end{enumerate}

\end{proposition}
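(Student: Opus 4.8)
The plan is to reduce everything to a rank-one calculation by localizing at $S'=\{\alpha\}$. Since both the type of $\alpha$ and the set $\Sigma$ depend only on the open orbit $G/H$, I am free to replace $X$ by a convenient model: using \autoref{lem:1PSG2} I choose a dominant $1$-parameter subgroup $\lambda$ and a toroidal completion of $G/H$ with $S(G^\lambda)=\{\alpha\}$ and $\Xi_\QQ(X^\lambda)=\<\Sigma(X^\lambda)\>_\QQ$. Because $\lambda$ is adapted to $S'$ we have $\<\alpha,\lambda\>=0$, hence $\alpha,2\alpha\in\lambda^\perp$; the corollary to the localization theorem then gives $\alpha\in\Sigma(X)\iff\alpha\in\Sigma(X^\lambda)$ and the same for $2\alpha$, while \autoref{prop:ColorsX} shows that $\alpha$ keeps its type under localization. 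Renaming $X^\lambda$ back to $X$ and $G^\lambda$ to $G$, it therefore suffices to treat the case where $\alpha$ is the only simple root of $G$ and $\Xi_\QQ(X)=\<\Sigma(X)\>_\QQ$.

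In this situation $\Sigma(X)\subseteq\QQ\alpha$, so $\Xi_\QQ(X)\subseteq\QQ\alpha$ has rank $0$ or $1$, and $-\cV(X)^\vee$ is a pointed cone inside $\QQ_{\ge0}\alpha$. First I would dispose of the non-root types. If $\alpha$ is of type \tP/ then $P_\alpha=G$, so the open $B$-orbit equals the open $G$-orbit and $X$ has rank $0$; if $\alpha$ is of type \tB/ then the image of $H^{\|red|}$ in $\|Aut|(B\backslash G)=\PGL(2)$ contains $U_0$, so $H^{\|red|}$ contains a maximal unipotent subgroup of $G$, $X$ is horospherical, and $\cV(X)=N_\QQ(X)$. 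In both cases $\Sigma(X)=\leer$, so neither $\alpha$ nor $2\alpha$ lies in $\Sigma(X)$, which matches all three assertions. For the remaining types I will find $\alpha\in\Xi_p(X)$ (type \tA/) resp. $2\alpha\in\Xi_p(X)$ (type \tAA/), so that $\Xi_\QQ(X)\ne0$ and $X$ has rank exactly $1$; then $-\cV(X)^\vee=\QQ_{\ge0}\alpha$ and $\Sigma(X)=\{\sigma\}$ with $\sigma$ the primitive generator of the subgroup $\ZZ\alpha\cap\Xi_p(X)$ of $\ZZ\alpha$.

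It remains to identify $\sigma$, i.e. to compute $\ZZ\alpha\cap\Xi_p(X)$ from the type. Here I would pull back a nonconstant $B_0$-semiinvariant along the equivariant morphism onto $\PGL(2)/\Hq_\alpha$, exactly as in the proof of \autoref{prop:ColorRelations}. For type \tA/ this produces $\alpha\in\Xi_p(X)$, so $\ZZ\alpha\cap\Xi_p(X)=\ZZ\alpha$ and $\sigma=\alpha$; in particular $\alpha\in\Sigma(X)$ and $2\alpha\notin\Sigma(X)$. For type \tAA/ the model is $\PGL(2)/N_0$, whose nonconstant semiinvariants have weight $2\alpha$, giving $2\alpha\in\Xi_p(X)$. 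If $p\ne2$ then \autoref{cor:Parity} yields $\alpha\notin\Xi_p(X)$, so the subgroup is $2\ZZ\alpha$ and $\sigma=2\alpha$, whence $2\alpha\in\Sigma(X)$ and $\alpha\notin\Sigma(X)$; if $p=2$ the sign $(-1)^n$ occurring in \autoref{cor:Parity} is trivial, so already $\alpha\in\Xi_p(X)$ and $\sigma=\alpha$. Collecting these computations gives \ref{it:RootColors.a}, \ref{it:RootColors.b} for $p\ne2$ and \ref{it:RootColors.c} for $p=2$. The delicate points, which I would treat carefully, are the two membership facts $2\alpha\in\Xi_p(X)$ and its $p=2$ sharpening $\alpha\in\Xi_p(X)$ coming from the $\PGL(2)/N_0$-model, together with the horospherical claim for type \tB/; granting these, the final identification of $\sigma$ inside $\ZZ\alpha\cong\ZZ$ is immediate.
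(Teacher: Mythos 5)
Your proposal follows the same skeleton as the paper's own proof: replace $X$ by a toroidal completion of its open orbit, localize at $S'=\{\a\}$, use \autoref{prop:ColorsX} to see that the type of $\a$ is unchanged and the corollary $\Sigma(X^\lambda)=\Sigma(X)\cap\lambda^\perp$ to see that membership of $\a$ and $2\a$ in $\Sigma$ is unchanged, then settle the question on $X^\lambda$. The difference is the endgame. The paper chooses $\lambda$ via \autoref{lem:1PSG}, so that the connected centre of $G^\lambda$ acts trivially and $X^\lambda$ is a variety under a semisimple rank-one group $G_0$; the proposition is then read off from the four possibilities for $H_0^{\|red|}$ in table \eqref{eq:2}. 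You instead choose $\lambda$ via \autoref{lem:1PSG2}, keep the (possibly nontrivially acting) centre, and compute $\Sigma(X^\lambda)$ by hand: $\Xi_p$-membership of $\a$ resp.\ $2\a$ via pullback from the $\P^1$-model as in \autoref{prop:ColorRelations} and \autoref{cor:Parity}, together with the observation that $\Xi_\QQ=\<\Sigma\>_\QQ$ forces $-\cV^\vee=\QQ_{\ge0}\a$ once the rank is $1$. For types \tA/ and \tAA/ this is carried out correctly, including $p=2$ (where, even more simply, $\tfrac12\in\ZZ_p$ makes $\a\in\Xi_p(X)$ automatic once $2\a\in\Xi_p(X)$). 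This variant buys a self-contained identification of $\Sigma$ where the paper tersely appeals to the rank-one picture, at the price of having to control the centre by hand.

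That price is exactly where the one genuine gap sits, in type \tP/: you claim that $P_\a=G$, hence the open $B$-orbit equals the open $G$-orbit, hence $\|rk|X=0$. The last inference is false for a general reductive $G$ of semisimple rank one, because \autoref{lem:1PSG2}, unlike the paper's \autoref{lem:1PSG}, does not make the connected centre of $G^\lambda$ act trivially: for $G=\SL(2)\times\Gm$ acting on $X=\Gm$ through the second factor, the open $B$-orbit is the open $G$-orbit and yet $\|rk|X=1$. The step can be repaired inside your framework: by the type-\tP/ row of \autoref{prop:ColorRelations} one has $\a^r=0$ on $\Xi(X)$ (equivalently, since $B$ acts transitively on the open orbit, every $B$-semiinvariant is a unit there, hence by Rosenlicht a $G$-semiinvariant, so its weight kills $\a^\vee$); combined with your standing hypothesis $\Xi_\QQ(X)=\<\Sigma(X)\>_\QQ\subseteq\QQ\a$ and $\<\a,\a^\vee\>=2\ne0$ this gives $\Xi_\QQ(X)=0$, hence $\Sigma(X)=\leer$. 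Without this extra argument (or without killing the centre as the paper does) the type-\tP/ case does not close. Relatedly, in type \tB/ the passage from ``the image of $H^{\|red|}$ in $\PGL(2)$ contains $U_0$'' to ``$H^{\|red|}$ contains a maximal unipotent subgroup of $G$'' requires splitting an extension of $\Ga$ by a central diagonalizable group, and you also need to replace $H$ by $H^{\|red|}$ via the purely inseparable lemma of Section 3 before quoting ``horospherical $\Rightarrow\cV=N_\QQ$''; you flagged these points as delicate, and they are all true, but they must be written out for the argument to be complete.
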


\begin{proof}

  Without loss of generality we may replace $X$ by a toroidal
  compactification of its open $G$-orbit. The corresponding fan is
  denoted by $\cF$. Choose $\lambda$ as in \autoref{lem:1PSG} with
  $S':=\{\a\}$. Then $G^\lambda$ acts on $X^\lambda$ only via a
  semisimple quotient $G_0$ of rank $1$. Let $G_0/H_0$ be the open
  $G_0$-orbit in $X^\lambda$.

  Assume first $p\ne2$. Then $\a\in S^\tA/(X)$ iff $\a\in
  S^\tA/(X^\lambda)$ iff $H_0^{\|red|}\sim T_0$ iff
  $\a\in\Sigma(X^\lambda)$ iff $\a\in\Sigma(X)$. This proves
  \ref{it:RootColors.a}.

  The argument for \ref{it:RootColors.b} is analogous with $T_0$
  replaced by $N(T_0)$. Finally, for $p=2$ we argue with
  $\Sigma(G_0/T_0)=\Sigma(G_0/N(T_0))=\{\a\}$.
\end{proof}

\begin{remarks}

  1. The proposition shows that in case $p\ne2$ the type of the simple
  roots can be recovered from $S^\tP/$ and $\Sigma(X)$ as follows:
  \begin{equation}
    \text{$\a\in S$ is of type }
    \begin{cases}
      \tP/&\text{if $\a\in S^\tP/$}\\
      \tA/&\text{if $\a\in\Sigma(X)$}\\
      \tAA/&\text{if $2\a\in\Sigma(X)$}\\
      \tB/&\text{otherwise}
    \end{cases}
  \end{equation}
  This way, all colors can be recovered but some might appear multiple
  times (see \autoref{lem:Multiple}). For colors of type \tB/, that
  behavior is controlled by $\Sigma(X)$, as well (see the following
  \autoref{prop:TypeB}).

  2. For $p=2$ and $\a\in S^\tAA/$ it is tempting to define the
  corresponding spherical root to be $2\a$ instead of $\a$. This would
  make parts \ref{it:RootColors.a} and \ref{it:RootColors.b} of
  \autoref{prop:RootColors} work uniformly in all characteristics. We
  opted against this procedure. The main reason is that otherwise
  spherical roots would not be roots (possibly not simple) of some
  root system. Example: For $p=2$ and $X=\SL(3)/\SO(3)$ the two roots
  are $\a_1$ and $\a_1+\a_2$ (see \cite{Schalke}) which are visibly
  contained in an $\A_2$-root system. On the other hand, the root
  $\a_1$ is of type \tAA/ and the set $\{2\a_1,\a_1+\a_2\}$ is not
  part of any root system.

\end{remarks}

\begin{proposition}\label{prop:TypeB}.

  For two distinct simple roots $\a_1,\a_2\in S^\tB/$ are equivalent:

  \begin{enumerate}

  \item\label{it:TypeB.a} $\a_1$ and $\a_2$ move the same color $D$.

  \item\label{it:TypeB.b}$\a_1$ and $\a_2$ are orthogonal to each
    other and $q_1\a_1+q_2\a_2\in\Sigma(X)$ for two $p$-powers
    $q_1,q_2$ (one of which is necessarily equal to $1$).

  \end{enumerate}

  \noindent In this case

  \begin{equation}\label{eq:37}
    q_1^{-1}\a_1^r=q_2^{-1}\a_2^r.
  \end{equation}

  Moreover, $D$ is not moved by any other simple root.

\end{proposition}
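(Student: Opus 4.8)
The plan is to first peel off the numerical identity, then reduce the whole statement to a rank-one situation by localization at $S$, where the root system becomes a rank-two system and the hard point—orthogonality—can be isolated.

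For $(a)\Rightarrow$\eqref{eq:37} I would simply feed both roots into the type-\tB/ line of \autoref{prop:ColorRelations}: since $\a_1,\a_2$ are of type \tB/ and both move $D$, we have $q_{D,\a_1}\delta_D=\a_1^r$ and $q_{D,\a_2}\delta_D=\a_2^r$, so $q_{D,\a_1}^{-1}\a_1^r=\delta_D=q_{D,\a_2}^{-1}\a_2^r$. This is \eqref{eq:37} with $q_i:=q_{D,\a_i}$, and it forces $\delta_D\ne0$ (otherwise both $\a_i^r$ vanish and the $\a_i$ would be of type \tP/). To set up the reduction I would apply \autoref{lem:1PSG2} with $S'=\{\a_1,\a_2\}$, replace $X$ by the resulting toroidal compactification and pass to $X^\lambda$. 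By \autoref{prop:ColorsX} the color $D$ and its two moving roots survive in $X^\lambda$ with the same types and the same $q_{D,\a_i}$, and $\delta_D$ is just the restriction of $\delta_{D^*}$; by the corollary computing $\Sigma(X^\lambda)$ every spherical root of $X^\lambda$ is one of $X$, and $\a_1,\a_2\in\lambda^\perp$. Now $\Xi_\QQ(X^\lambda)=\<\Sigma(X^\lambda)\>_\QQ\subseteq\QQ\a_1+\QQ\a_2$, so its dimension is $1$ or $2$; \eqref{eq:37} makes $q_1^{-1}\a_1^\vee-q_2^{-1}\a_2^\vee$ annihilate $\Xi_\QQ(X^\lambda)$, and since the two coroots are independent this excludes dimension $2$. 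Hence $\Xi_\QQ(X^\lambda)=\QQ\sigma$ for a single spherical root $\sigma=c_1\a_1+c_2\a_2$, and pairing $\a_i^r=q_i\delta_D$ against $\sigma$ yields $\<\sigma,\a_i^\vee\>=q_i m$ with $m:=\delta_D(\sigma)$. Because $\cV(X^\lambda)$ contains the image of the antidominant chamber one gets $c_1,c_2\ge0$; inverting the (positive definite) rank-two Cartan matrix then gives $m>0$ and in fact $c_1,c_2>0$.

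The orthogonal case now closes immediately: if $\a_1\perp\a_2$ then $\<\sigma,\a_i^\vee\>=2c_i=q_i m$, so $c_1:c_2=q_1:q_2$ and $\sigma$ is proportional to $q_1\a_1+q_2\a_2$; dividing by the common $p$-power and using that $\sigma$ is primitive in $\ZZ S\cap\Xi_p(X)$ gives $\sigma=q_1'\a_1+q_2'\a_2$ with $\{q_1',q_2'\}=\{1,p^{|e_1-e_2|}\}$, which is exactly (b) together with the parenthetical remark. The genuine obstacle is to prove $\a_1\perp\a_2$, i.e. to rule out the irreducible rank-two systems $A_2,B_2,G_2$ for the pair $(\a_1,\a_2)$. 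I expect this to be the hard part, because the numerology alone does not suffice: $\<\sigma,\a_i^\vee\>=q_i m$ with $\sigma$ primitive is satisfiable in every rank-two type (for instance $\sigma=\a_1+\a_2$, $q_1=q_2=1$, $m=1$ in $A_2$), so the exclusion has to be geometric. The route I would take is to analyze the induced rank-one spherical variety for the semisimple quotient $G_0$ of $G^\lambda$: the single color $D$ lifts to one $H_0$-stable divisor in the flag variety of $G_0$ that is generically purely inseparable over each of the two $\P^1$-fibrations, and for an irreducible system these two fibrations are linked; I would show this configuration forces either a second color (so $D$ cannot be moved by both roots) or a contradiction with $m>0$, invoking the valuation-cone estimates of the paper. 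This elimination, not the arithmetic around \eqref{eq:37}, is where the real content lies.

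For $(b)\Rightarrow(a)$ I would localize so that $\Xi_\QQ(X^\lambda)=\QQ(q_1\a_1+q_2\a_2)$, reducing to the orthogonal rank-one $A_1\times A_1$ model; there \autoref{prop:NumberColors} gives $\#\Delta=1+\|rk|\Xi(H_0)$, and checking $\|rk|\Xi(H_0)=0$ produces a single color, which the two type-\tB/ roots are then both forced to move. Finally, the last assertion comes out cleanly from the pieces already in place: if some $\beta\ne\a_1,\a_2$ also moved $D$, then by \autoref{lem:Multiple} and the implication $(a)\Rightarrow(b)$ just proved, $\beta$ would be of type \tB/ and orthogonal to each of $\a_1,\a_2$, while \eqref{eq:37} would make $\a_1^\vee,\a_2^\vee,\beta^\vee$ all proportional on $\Xi_\QQ$. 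Localizing at $\{\a_1,\a_2,\beta\}$, the two independent relations $q_{\a_1}^{-1}\a_1^\vee=q_{\a_2}^{-1}\a_2^\vee=q_\beta^{-1}\beta^\vee$ on $\Xi_\QQ$ force $\dim\Xi_\QQ\le1$, so $\Sigma$ has a single element; but the pairs $\{\a_1,\a_2\}$ and $\{\a_1,\beta\}$ each contribute a spherical root with a different support, a contradiction.
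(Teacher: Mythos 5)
Your overall strategy (localization at $S$ with $S'=\{\a_1,\a_2\}$, reduction to a rank-two semisimple group $G_0$, the derivation of \eqref{eq:37} from \autoref{prop:ColorRelations}) is the same as the paper's, and your argument for the final assertion is a workable variant of the paper's direct computation. But the core of the proposition is precisely the step you leave open: proving $\a_1\perp\a_2$ in the direction \ref{it:TypeB.a}$\Rightarrow$\ref{it:TypeB.b}, i.e.\ ruling out the irreducible rank-two systems $\A_2$, $\B_2$, $\G_2$. You correctly diagnose that the numerology around \eqref{eq:37} cannot do this, but what you offer in its place (``forces either a second color \dots or a contradiction with $m>0$, invoking the valuation-cone estimates'') is a hope, not an argument; nothing is actually carried out. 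The paper closes exactly this gap by a short conceptual chain: after localization one has a rank-one variety with a single color for a semisimple rank-two group; the color and the half-line $\cV$ lie opposite to each other, so by \cite{Hyd}*{Thm.~6.7} the open orbit $G_0/H$ is affine and hence $H$ is reductive; since there is only one color, \eqref{eq:40} forces $\|rk|\Xi(H)=0$, i.e.\ $H$ semisimple; and the dimension formula \cite{Hyd}*{Thm.~6.6} gives $\|dim|H=3,4,5,7$ according to whether $G_0$ is of type $\A_1\A_1$, $\A_2$, $\B_2$, $\G_2$ --- since no semisimple group has dimension $4$, $5$ or $7$, only $\A_1\A_1$ survives, with $H\cong\SL(2)$ embedded via Frobenius twists. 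Without an argument of this strength your proof of the key implication is incomplete.

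Two further points. First, in the converse direction \ref{it:TypeB.b}$\Rightarrow$\ref{it:TypeB.a} you invoke \autoref{prop:NumberColors} after ``checking $\|rk|\Xi(H_0)=0$''; but that check presupposes knowing $H_0$, which is what has to be determined. The paper does this by inspecting the spherical subgroups of $\SL(2)\times\SL(2)$: a spherical root of the form $q_1\a_1+q_2\a_2$ occurs only when the open orbit is isogenous to $\SL(2)\times\SL(2)/(F_{q_1}\times F_{q_2})\SL(2)$, and for that orbit one sees directly that there is a single color moved by both roots. Second, your parenthetical claim that $\delta_D=0$ would make $\a_1,\a_2$ of type \tP/ is false: in $G/B$ every simple root is of type \tB/ while all $\a^r=0$. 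What you actually need at that point is to exclude $\|rk|X^\lambda=0$, and the correct reason is that in a complete homogeneous (rank-zero) spherical variety distinct simple roots move distinct Schubert divisors, so \ref{it:TypeB.a} cannot hold there; this should be said explicitly rather than derived from the false implication.
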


\begin{proof}

  Again replace $X$ by a toroidal compactification and choose
  $\lambda$ as in \autoref{lem:1PSG} with $S'=\{\a_1,\a_2\}$. Now
  $G^\lambda$ acts on $X^\lambda$ via a semisimple group $G_0$ of rank
  $2$. The simple roots of $G_0$ are $\a_1$ and $\a_2$, and both are
  of type $\tB/$ with respect to $X^\lambda$.

  Assume first \ref{it:TypeB.b}. Then $G_0$ is of type $\A_1\A_1$. An
  easy inspection of its subgroups shows that $X^\lambda$ has a
  spherical root of the given form if and only if its open orbit is
  isogenous to $\SL(2)\times\SL(2)/(F_{q_1}\times F_{q_2})\SL(2)$
  (with $F_q$=Frobenius morphism of $\SL(2)$). In that case one checks
  that \ref{it:TypeB.a} holds for $X^\lambda$ and therefore for $X$.

  Conversely assume \ref{it:TypeB.a}. Then $X^\lambda$ has precisely
  one color $D$ which is moved by both simple roots. Thus,
  \eqref{eq:5} implies that a relation like \eqref{eq:37} holds. In
  particular, the rank of $X^\lambda$ is~$1$.

  Now one could use the classification of spherical varieties of rank
  $\le1$ in \cite{SpRoot} and conclude that $G_0$ is of type
  $\A_1\A_1$ having a root of the given form. A selfcontained argument
  goes as follows. We may assume that $X=G/H$ is homogeneous where $H$
  is reduced and connected. The color and the half{}line $\cV(X)$ lie
  opposite to each other. By \cite{Hyd}*{Thm.~6.7} the variety $X$ is
  affine, thus $H$ is reductive. Since there is only one color,
  formula \eqref{eq:40} implies that $H$ is semisimple. Moreover, the
  dimension formula \cite{Hyd}*{Thm.~6.6} shows that $\|dim|H=3,4,5,7$
  for $G=\A_1\A_1,\A_2,\B_2, \G_2$, respectively. This shows that $G$
  is isogenous to $\SL(2)\times\SL(2)$ and that $H\cong\SL(2)$ is
  embedded diagonally using the Frobenius morphisms. The assertion
  \ref{it:TypeB.b} follows.

  Formula \eqref{eq:37} follows immediately from
  \eqref{eq:5}. Finally, assume $\a_3\in S$ moves $D$, as well. Then
  $\a_3\in S^\tB/$ (\autoref{lem:Multiple}). Thus, by the above,
  $\a_3$ would be orthogonal to $\a_1$ and $\a_2$. Moreover,
  $q_1'\a_1+q_3\a_3\in\Sigma(X)$ for some $p$-powers $q_1'$,
  $q_3$. Now equation \eqref{eq:37} implies the contradiction
  \begin{equation}
    2q_1'q_1^{-1}=\<q_1'\a_1+q_3\a_3,q_1^{-1}\a_1^\vee\>=
    \<q_1'\a_1+q_3\a_3,q_2^{-1}\a_2^\vee\>=0.
  \end{equation}
\end{proof}

\section{Localization at $\Sigma$}
\label{sec:SecondLoc}

Localization at $S$ allows one to pass from $S$ to a subset.  There is
a second, older, kind of localization which does the same thing with
$\Sigma(X)$. Geometrically, it simply corresponds to looking at an
orbit in a carefully chosen toroidal embedding. The next result
summarizes what was already known about localization at $\Sigma$:

\begin{proposition}\label{prop:LocalizationAtSigma}

  Let $X$ be a toroidal spherical variety and let $Y\subseteq X$ be an
  orbit. Put $V:=\<\cC_Y\>^\perp\subseteq\Xi_\QQ(X)$. Then

  \begin{enumerate}

  \item\label{it:LocalizationAtSigma.a} $\Xi_p(Y)=\Xi_p(X)\cap V$.
  \item\label{it:LocalizationAtSigma.b} $\Sigma(Y)=\Sigma(X)\cap V$.
  \item\label{it:LocalizationAtSigma.c} $S^\tP/(Y)=S^\tP/(X)$.

  \end{enumerate}

\end{proposition}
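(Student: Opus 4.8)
The plan is to work with the homogeneous space $Y=G/H_Y$ underlying the orbit and to extract all three statements from the local structure of $X$ near $Y$ together with the combinatorics of the fan $\cF(X)$. Throughout I will use that, since $\cC_Y$ is a cone of $\cF(X)$, we have $\cC_Y\subseteq\cV(X)$, so every spherical root of $X$ is nonpositive on $\cC_Y$.

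First I would treat \ref{it:LocalizationAtSigma.a}. Applying the local structure theorem \cite{InvBew}*{1.2} at a ${}^-\!B$-fixed point $z$ of a closed orbit contained in $\overline Y$ produces a finite chart $P_u\times\Aq\pfeil X$, where $\Aq$ is a normal affine toric variety with open $T$-orbit $A$ satisfying $\Xi_\QQ(A)=\Xi_\QQ(X)$, and $Y$ meets this chart in $P_u\cdot A_Y$ with $A_Y$ the $T$-orbit of $\Aq$ attached to the cone $\cC_Y$. The character group of $A_Y$ is $\{\chi\in\Xi(X)\mid\chi|_{\cC_Y}=0\}=\Xi(X)\cap V$, and since $P_u\cdot A_Y$ is the open $B$-orbit of $Y$ this gives $\Xi(Y)=\Xi(X)\cap V$; tensoring with $\ZZ_p$ yields \ref{it:LocalizationAtSigma.a}, and dually $N_\QQ(Y)=N_\QQ(X)/\<\cC_Y\>$.

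For \ref{it:LocalizationAtSigma.b} the key classical input is the comparison of valuation cones: the projection $N_\QQ(X)\auf N_\QQ(Y)$ maps $\cV(X)$ onto $\cV(Y)$, equivalently $\cV(Y)^\vee=\cV(X)^\vee\cap V$; this is the content of localization at $\Sigma$ and follows from the theory of \cite{Hyd}. Granting it, I would argue combinatorially. Since $V$ is a subspace, $-\cV(Y)^\vee=(-\cV(X)^\vee)\cap V$, and because each $v\in\cC_Y$ is a supporting functional of the pointed cone $-\cV(X)^\vee$ (we have $\<\sigma,v\>\le0$ for every generator $\sigma\in\Sigma(X)$), the subspace $V=\cC_Y^\perp$ cuts out a face of $-\cV(X)^\vee$. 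Hence the extremal rays of $-\cV(Y)^\vee$ are precisely those extremal rays $\QQ_{\ge0}\sigma$ of $-\cV(X)^\vee$ that lie in $V$, i.e.\ the $\sigma\in\Sigma(X)\cap V$. Primitivity in $\ZZ S\cap\Xi_p(Y)=\ZZ S\cap\Xi_p(X)\cap V$ is inherited from primitivity in the larger lattice $\ZZ S\cap\Xi_p(X)$, so $\Sigma(Y)=\Sigma(X)\cap V$, which is \ref{it:LocalizationAtSigma.b}.

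Finally, for \ref{it:LocalizationAtSigma.c} I would identify the parabolics $P_Y=P_X$. The inclusion $P_X\subseteq P_Y$ comes straight from the chart: the Levi $L$ of $P=P_X$ acts on $\Aq$ through its abelianization, hence preserves the single $T$-orbit $A_Y$, so $P$ stabilizes the open $B$-orbit $P_u\cdot A_Y$ of $Y$, giving $S^\tP/(X)\subseteq S^\tP/(Y)$. For the reverse inclusion I would use toroidality: if $\a\notin S^\tP/(X)$ then $\a$ moves a color $D$ of $X$ and $-\a$ is not a root of $P$, so $\mathfrak g_{-\a}$ is transverse to the chart; since the chart is $P_u$-equivariant and $[L,L]$ acts trivially on $\Aq$, this transverse direction is the same along $A$ and along $A_Y$, and toroidality ($D\not\supseteq Y$) guarantees that the transversality persists at $A_Y$, so $\a$ moves a color of $Y$ as well and $\a\notin S^\tP/(Y)$. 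I expect the main obstacle to be exactly this reverse inclusion together with the valuation-cone comparison underlying \ref{it:LocalizationAtSigma.b}: both assert that the behavior of $G$-invariant valuations and of the $-\a$ direction is genuinely uniform along the toric slice, and it is precisely here that toroidality is indispensable, since otherwise a color of $X$ could degenerate into $Y$ and manufacture a spurious type-\tP/ root.
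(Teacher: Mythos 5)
The genuine gap is in part \ref{it:LocalizationAtSigma.c}, in the inclusion $S^\tP/(Y)\subseteq S^\tP/(X)$. Your easy inclusion is fine (the Levi of $P_X$ acts on $\Aq$ through a torus, so $P_X$ stabilizes the open $B$-orbit $Y_0$ of $Y$), although even to set it up you implicitly use the nontrivial fact that the reduced stabilizer of the ${}^-\!B$-fixed point of a closed orbit of the toroidal variety $X$ is exactly ${}^-\!P_X$ --- which is precisely the input the paper's proof invokes. For the reverse inclusion, ``$\mathfrak g_{-\a}$ is transverse to the chart and the transversality persists at $A_Y$'' is not an argument: the chart has open image, so there is nothing for $\mathfrak g_{-\a}$ to be transverse to, and what has to be proved is the group-level statement that $P_\a$ moves $Y_0$. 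In characteristic $p$ no infinitesimal statement can deliver this (orbit maps may be inseparable, so a nonzero tangent action of $\mathfrak g_{-\a}$ neither implies nor is implied by ${}^-U_\a Y_0\ne Y_0$ --- note the paper systematically avoids Lie-algebra arguments), and you never exhibit the mechanism by which $D\not\supseteq Y$ forces $P_\a Y_0\ne Y_0$; that implication \emph{is} assertion \ref{it:LocalizationAtSigma.c}. That this step must carry real weight is shown by the complete normal but non-toroidal embedding $X'=\P(k^2\oplus k)$ of $\SL(2)/U$ (with $U$ a maximal unipotent subgroup): the unique color contains the fixed point $Y'=\{[0:0:1]\}$, and indeed $S^\tP/(Y')=S\ne\leer=S^\tP/(\SL(2)/U)$. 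The paper's route is different: all closed orbits of a toroidal compactification are of the form $G/Q$ with $Q^{\|red|}={}^-\!P_X$; a closed orbit of $X$ inside $\overline Y$ is simultaneously a closed orbit of the normalization of $\overline Y$, which is a normal embedding of $Y$, and for any normal embedding of $Y$ the reduced stabilizer of a ${}^-\!B$-fixed point of a closed orbit contains ${}^-\!P_Y$ (Luna--Vust theory, \cite{Hyd}). This gives ${}^-\!P_Y\subseteq{}^-\!P_X$, i.e.\ $S^\tP/(Y)\subseteq S^\tP/(X)$, and with your easy inclusion, equality. You need either this argument or a genuine replacement for the ``persistence'' step.

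Two smaller points. In part \ref{it:LocalizationAtSigma.a} you assert the integral equality $\Xi(Y)=\Xi(X)\cap V$ and only then tensor; your chart argument does not give this in positive characteristic, because the local structure morphism $P_u\times\Aq\pfeil X$ is only finite (possibly inseparable), not an open immersion, and the slice satisfies only $\Xi_\QQ(A)=\Xi_\QQ(X)$. This is exactly why the proposition is stated with $\Xi_p$: after observing that finite purely inseparable equivariant morphisms change character groups only by $p$-power index, your argument yields $\Xi_p(Y)=\Xi_p(X)\cap V$, which is what is actually claimed (the paper gets it by citing \cite{Hyd}*{Thm.~1.3}). Part \ref{it:LocalizationAtSigma.b} is correct: you take the same key input as the paper, namely that $\cV(Y)$ is the image of $\cV(X)$ (the paper cites \cite{InvBew}*{Satz~7.4}), and your face/extremal-ray/primitivity argument is a valid elaboration of the paper's one-line deduction.
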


\begin{proof}

  Part \ref{it:LocalizationAtSigma.a} follows e.g. from
  \cite{Hyd}*{Thm.~1.3}. Moreover, $\cV(Y)=(\cV(X)+V)/V$ where
  $V=\<\cC\>_\QQ$ (follows from \cite{InvBew}*{Satz~7.4}) which
  implies \ref{it:LocalizationAtSigma.b}. Part
  \ref{it:LocalizationAtSigma.c} follows for example from the fact
  that all closed orbits in any toroidal compactification of $X$ are
  of the form $G/Q$ with $Q^{\|red|}={}^-\!P$ where $P$ is the
  parabolic corresponding to $S^\tP/$.
\end{proof}

If $p\ne2$, then the remark after \autoref{prop:RootColors} allows now
to determine the type of a simple root for $Y$.
\begin{eqnarray}
  S^\tP/(Y)&=&S^\tP/(X)\\
  \label{eq:43}S^\tA/(Y)&=&S^\tA/(X)\cap V\\
  \label{eq:44}S^\tAA/(Y)&=&S^\tAA/(X)\cap V\\
  S^\tB/(Y)&=&S^\tB/(X)\cup (S^\tA/(X)\setminus V)\cup (S^\tAA/(X)\setminus V)
\end{eqnarray}
For $p=2$ equations \eqref{eq:43} and \eqref{eq:44} have to be
replaced by the weaker equality
\begin{equation}
  S^\tA/(Y)\cup S^\tAA/(Y)=(S^\tA/(X)\cap V)\cup(S^\tAA/(X)\cap V)
\end{equation}
The next lemma shows (in particular) that moreover
\begin{equation}
  S^\tAA/(Y)\subseteq S^\tAA/(X)\cap V
\end{equation}

\begin{lemma}\label{lem:LocSigma2}

  Let $X$ and $Y$ be as above and let $\a\in S\cap V$ be of type \tA/
  for $X$. Then $\a$ is also of type \tA/ for $Y$. Moreover, let $D$
  be a color of $X$ which is moved by $\a$. Then $E=(D\cap
  Y)^{\|red|}$ is a color of $Y$ which is moved by $\a$ and there is a
  $p$-power $q$ such that $\delta_E$ is the restriction of $q\delta_D$
  to $\Xi_p(Y)$.

\end{lemma}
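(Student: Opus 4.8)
The plan is to split the statement into three pieces: that $\a$ defines a character of $Y$, that $\a$ moves exactly two colors of $Y$ (hence is of type \tA/), and the comparison of the two valuations. The first piece is immediate: since $\a$ is of type \tA/ for $X$, \autoref{prop:ColorRelations} gives $\a\in\Xi_p(X)$, and together with the hypothesis $\a\in V$ and \autoref{prop:LocalizationAtSigma} this yields $\a\in\Xi_p(X)\cap V=\Xi_p(Y)$. When $p\ne2$ the type is then forced by the already established \eqref{eq:43}, namely $S^\tA/(Y)=S^\tA/(X)\cap V$. The genuinely new content is therefore the uniform treatment of the colors, valid also for $p=2$ (where $\a\in\Sigma(X)\cap V=\Sigma(Y)$ only pins the type down to \tA/ or \tAA/), together with the valuation formula.

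For the color statement I would pick a one-parameter subgroup $\mu$ with $-\mu^r$ in the relative interior of $\cC_Y$; by \autoref{prop:GXLambda} this realizes $Y$ as the orbit degenerating the open orbit, so that $y_0:=\pi_\mu(x_0)\in Y$ for generic $x_0$. The key observation is that $\a\in V=\<\cC_Y\>^\perp$ forces $\<\a,\mu\>=\mu^r(\a)=0$; hence $\mu$ lies in the kernel of $\a$ and centralizes the rank-one subgroup $M_\a:=\<U_\a,U_{-\a}\>$ as well as the reflection $s_\a$, so that $M_\a\subseteq G^\mu$ and $P_\a\subseteq G_\mu$. Because $\pi_\mu$ is $G^\mu$-equivariant and $M_\a$ commutes with the $\mu$-flow, the $M_\a$-orbit of $y_0$ is the $\pi_\mu$-image of the $M_\a$-orbit of $x_0$, and the three-orbit decomposition of $P_\a x_0$ (which is precisely what type \tA/ means for $X$, cf.\ \eqref{eq:2}) degenerates \emph{without collapsing}. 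This shows $P_\a y_0$ again consists of three $B$-orbits, so $\a$ is of type \tA/ for $Y$ in every characteristic, and the two colors $D,D'$ of $X$ limit to two distinct $B$-stable prime divisors of $Y$. As $Y$ is homogeneous, every proper $B$-stable prime divisor is automatically a color; thus $E=(D\cap Y)^{\|red|}$ is the color coming from $D$, and likewise for $D'$.

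For the valuation formula I would work at the generic point $\eta_E$ of $E$. Since $X$ is toroidal, $Y\not\subseteq D$, so $D\cap Y$ is a proper divisor of $Y$ and $E$ occurs in it with a finite multiplicity $m$. Writing a $B$-semiinvariant $f$ on $X$ with $\chi_f=\chi\in\Xi_p(Y)$ as a unit times a power of a local equation of $D$ at $\eta_D$ and restricting to $Y$ gives $v_E(f|_Y)=m\,v_D(f)$, that is, $\delta_E=m\,\delta_D$ on $\Xi_p(Y)$. Here one must check that the restricted unit stays a unit at $\eta_E$ and that $f|_Y\ne0$; both follow from $\chi\in V=\<\cC_Y\>^\perp$, which guarantees that $f$ does not vanish along $Y$ and is insensitive to the transverse directions spanned by $\cC_Y$.

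The main obstacle is to prove that $m$ is a $p$-power. The transverse local model of $Y\subseteq X$ along $\eta_E$ is toric, and $m$ is the index with which $\delta_D$ lands in the cocharacter lattice dual to $\Xi(Y)$; by \autoref{prop:LocalizationAtSigma} the groups $\Xi(Y)$ and $\Xi(X)\cap V$ agree after inverting $p$, so their index is a power of $p$ and hence $m\in p^\NN$. Alternatively one can identify $m$ with the quotient $q_{D,\a}/q_{E,\a}$ of the inseparable degrees of $\phi_{D,\a}$ for $X$ and for $Y$, which is manifestly a $p$-power; this is the same mechanism that produced the $p$-powers in \autoref{prop:ColorsX}. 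Setting $q:=m$ then completes the proof.
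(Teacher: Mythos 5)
Your opening reductions are fine: $\a\in\Xi_p(X)\cap V=\Xi_p(Y)$ via \autoref{prop:ColorRelations} and \autoref{prop:LocalizationAtSigma}, and for $p\ne2$ the type statement via \eqref{eq:43}. But the core of your argument fails at its very first step: the one-parameter subgroup $\mu$ you need does not exist in general. All of the localization-at-$S$ machinery you invoke (\autoref{prop:GXLambda}, \autoref{prop:ColorsX}, the equivariance of $\pi_\mu$) is established only for \emph{dominant} $\mu$, and dominance forces $-\mu^r$ to lie in $N^-_\QQ(X)$, the image of the antidominant Weyl chamber. You additionally require $-\mu^r$ to lie in the relative interior of $\cC_Y$, so you need that relative interior to meet $N^-_\QQ(X)$ --- and this can fail, since $\cC_Y$ is an arbitrary cone in $\cV(X)$, which is in general strictly larger than $N^-_\QQ(X)$. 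This is exactly the obstacle the paper's proof names at the outset (``we plan to use localization at $S$ but face the problem that $\cC_Y$ may not meet $N^-_\QQ(X)$''), and it is the entire reason for the auxiliary construction there: one passes to $\Gq=G\times\Gm$ acting on $X\times\Gm$, tilts the cone to $\overline\cC=(\cC_Y\times0)+\QQ_{\ge0}(v_0,1)$ with $v_0$ in the relative interior of $N^-(X)\cap\{\a=0\}$, and only then obtains a usable $\lambda$. Your proposal has no substitute for this step.

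Second, even granting such a $\mu$, your sentence ``the three-orbit decomposition of $P_\a x_0$ degenerates without collapsing'' asserts precisely what has to be proved. Under the equivariant degeneration $x_0\mapsto y_0=\pi_\mu(x_0)$ isotropy groups can only grow, so a priori the type can jump from \tA/ to \tB/, \tAA/ or \tP/; that such jumps are a real phenomenon is shown by the displayed formulas before the lemma, where type-\tA/ roots outside $V$ become type \tB/ for $Y$. (Note also that $\pi_\mu(P_\a x_0)=\pi_G(P_\a)\,y_0=(P_\a\cap G^\mu)y_0$, which is strictly smaller than $P_\a y_0$, so your image does not even compute the $B$-orbit structure of $P_\a y_0\subseteq Y$.) The paper's proof of non-collapsing is indirect and is where all the work lies: it compares the localization $\tilde Y^\lambda$ of the \emph{normalization} $\tilde Y$ of $\Yq=\overline{Y\times\Gm}$ with $\Xq^\lambda$ via a finite surjective equivariant map $\nu$; finiteness makes the isotropy of $\tilde Y^\lambda$ \emph{contained in} (not containing) the diagonalizable isotropy $H_0^{\|red|}$ of $\Xq^\lambda$, forcing type \tA/, and the lattice identity $\Xi_p(\tilde Y^\lambda)=\Xi_p(\Xq)\cap\Vq=\Xi_p(\Xq^\lambda)$ shows $\nu$ is purely inseparable. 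That pure inseparability is also what produces the $p$-power: $\nu^{-1}(D_0)=qE_0$ with $q\in p^\NN$. Your two proposed identifications of the multiplicity $m$ (a toric lattice index, or $q_{D,\a}/q_{E,\a}$ ``by the same mechanism as \autoref{prop:ColorsX}'') are analogies rather than arguments --- \autoref{prop:ColorsX} concerns Bia\l ynicki-Birula cells, not orbit closures, and orbit closures in toroidal embeddings are not even known to be normal, which is why the normalization must enter. Finally, your unit-times-power-of-a-local-equation computation at $\eta_E$ silently assumes that no color of $X$ other than $D$ passes through $\eta_E$; the $G$-stable divisors are indeed harmless because $\chi\in V$, but other colors need not be.
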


\begin{proof}

  We plan to use localization at $S$ but face the problem that $\cC_Y$
  may not meet $N^-_\QQ(X)$, the image of the antidominant Weyl
  chamber in $N_\QQ(X)$. To bypass this problem, we go one dimension
  up: the group $\Gq:=G\times\Gm$ acts on $\Xq^0:=X\times\Gm$. Then
  $N_\QQ(\Xq^0)=N_\QQ(X)\oplus\QQ$ and
  $\cV(\Xq^0)=\cV(X)\times\QQ$. Now choose any $v_0$ in the relative
  interior of $N^-(X)\cap\{\a=0\}\subseteq\cV(X)$ and put
  $\overline\cC:=(\cC\times0)+\QQ_{\ge0}(v_0,1)$. Choose any fan $\cF$
  whose support is $\cV(\Xq^0)$ and which contains
  $\overline\cC$. This gives rise to a toroidal $\Gq$-variety
  $\Xq$. Moreover, $\Xq$ contains $Y\times\Gm$ since $\cC_Y$ is a face
  of $\overline\cC$. Its closure is denoted by $\Yq$.

  Now choose any $v\in\cC^0$ small enough such that $v+v_0$ is still
  in the relative interior of $N^-(X)\cap\{\a=0\}$ and choose
  $a\in\ZZ_{>0}$ such that $v_1:=a(v+v_0,1)$ is the image of a
  1-parameter subgroup $\lambda$ of $\Gq$. Then, by construction,
  $S^\lambda=\{\a\}$ and $\Gq^\lambda$ is of semisimple rank
  $1$. Moreover, $\Xq^\lambda$ is contained in $\Yq$ by
  \autoref{prop:GXLambda}.  Thus, we get a diagram
  \begin{equation}
    \cxymatrix{
      \tilde Y_\lambda\ar@{>>}[r]\ar[d]^{\tilde\pi_\lambda}&
      \Yq_\lambda\ar@{^{(}->}[r]\ar[d]^{\pi_\lambda}&\Xq_\lambda\ar[d]^{\pi_\lambda}\\
      \tilde Y^\lambda\ar@{>>}[r]^\nu&\Xq^\lambda\ar@{=}[r]&\Xq^\lambda\\}
  \end{equation}
  where $\tilde Y$ is the normalization of $\Yq$ and where the
  vertical arrows represent Bia\l ynicki-Birula contractions on the
  open cell. By \autoref{prop:ColorsX}, the type of $\a$ on
  $\Xq^\lambda$ is \tA/. This means that the open $\Gq^\lambda$-orbit
  in $\Xq^\lambda$ is of the form $\Gq^\lambda/H_0$ where
  $H_0^{\|red|}$ is diagonalizable.

  Now we argue that $\nu$ is purely inseparable. Indeed, the open
  orbit in $\tilde Y^\lambda$ is $\Gq^\lambda/H_1$ with
  $H_1^{\|red|}\subseteq H_0^{\|red|}\subseteq T$. This already shows
  that $\a$ is of type \tA/ for $\tilde Y$, hence for $\Yq$ and
  $Y$. Since both $H_1^{\|red|}$ and $H_0^{\|red|}$ are linearly
  reductive abelian groups we have
  \begin{equation} [H_0^{\|red|}:H_1^{\|red|}]=[\Xi_p(\tilde
    Y^\lambda):\Xi_p(\Xq^\lambda)].
  \end{equation}
  On the other hand
  \begin{equation}
    \Xi_p(\tilde Y^\lambda)=\Xi_p(\tilde Y)=
    \Xi_p(\Yq)=\Xi_p(\Xq)\cap\Vq=\Xi_p(\Xq^\lambda)
  \end{equation}
  where $\Vq:=\<\overline\cC\>$. This shows that $\nu$ is generically
  injective and therefore purely inseparable.

  The color $D$ gives rise to the color $D\times\Gm$ of $\Xq$. Its
  image $D_0$ in $\Xq^\lambda$ is a color of $\Xq^\lambda$. Now
  $E_0=\nu^{-1}(D_0)^{\|red|}$ is a color of $\tilde
  Y^\lambda$. Clearly $\nu^{-1}(D_0)=qE_0$ for some $p$-power
  $q$. Finally, the closure of $(\tilde\pi_\lambda)^{-1}(E_0)$ is a
  color of $\Yq$ which is of the form $E\times\Gm$ where $E=(D\cap
  Y)^{\|red|}$.

  Recall $V=\<\cC\>_\QQ^\perp$. Then
  \begin{equation}
    \Vq=\{(\chi,-v_0(\chi)\mid\chi\in V\}\cong V
  \end{equation}
  and $\Xi_\QQ(\Xq)=\Xi_\QQ(X)\oplus\QQ$, $\Xi_\QQ(\Yq)=V\oplus \QQ$,
  and $\Xi_\QQ(\Xq^\lambda)$. Thus for $\chi\in \Xi_\QQ(X)$:
  \begin{equation}
    \delta_D(\chi)=\delta_{D\times\Gm}(\chi,0)=
    \delta_{D\times\Gm}(\chi,-v_0(\chi))=\delta_{D_0}(\chi,-v_0(\chi))
  \end{equation}
  and similarly $\delta_E(\chi)=\delta_{E_0}(\chi,-v_0(\chi))$ for all
  $\chi\in V$. Thus for $\chi\in V$:
  \begin{equation}
    \delta_D(\chi)=\delta_{D_0}(\chi,-v_0(\chi))=
    q\delta_{E_0}(\chi,-v_0(\chi))=q\delta_E(\chi).
  \end{equation}
\end{proof}

\begin{remark}

  With these results it is possible to recover all colors of $Y$ and
  which color is being moved by which root. In characteristic zero
  this is good enough to compute the entire spherical system of
  $Y$. In positive characteristic we are missing information on the
  degrees $q_{D,\a}$ of $Y$, though. We plan to return to this
  question in the future.

\end{remark}

Localization at $\Sigma$ is, a priori, not possible for all subsets of
$\Sigma(X)$. Therefore, we define:

\begin{definition}

  A subset $\Sigma'$ of $\Sigma(X)$ is called a \emph{set of
    neighbors} if there is $v\in\cV(X)$ such that
  \begin{equation}
    \Sigma'=\{\sigma\in\Sigma(X)\mid v(\sigma)=0\}.
  \end{equation}
  Equivalently, $\Sigma'$ is a set of neighbors if $\QQ_{\ge0}\Sigma'$
  is a face of $\QQ_{\ge0}\Sigma(X)$.  Two spherical roots $\a$ and
  $\beta$ are called \emph{neighbors} if they are distinct and if
  $\{\a,\b\}$ is a set of neighbors.

\end{definition}

Clearly, if $\Sigma(X)$ is linearly independent then all subsets are
sets of neighbors. This is always the case if $p\ne2$ (see
\cite{BriES} for $\|char|k=0$ and \cite{SpRoot}*{\Neighbor} for the
general case). For $p=2$ and $X=SL(4)/SO(4)$, Schalke has shown
(unpublished) that
$\Sigma(X)=\{\a_1,\a_1+\a_2,\a_2+\a_3,\a_3\}$. Since
$\a_1+(\a_2+\a_3)=(\a_1+\a_2)+\a_3$, the pairs $(\a_1,\a_2+\a_3)$ and
$(\a_1+\a_2,\a_3)$ are not neighbors. In fact, $\cV(X)$ is the cone
over a quadrangle and the given pairs correspond to opposite faces.

\begin{lemma}\label{lem:NeighborSimple}

  If $\a,\b\in\Sigma(X)$ are multiples of simple roots then they are
  neighbors.

\end{lemma}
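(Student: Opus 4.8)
The plan is to verify the first (existential) form of the definition of ``set of neighbors'' directly: I will exhibit a single valuation $v\in\cV(X)$ whose zero set on $\Sigma(X)$ is exactly $\{\a,\b\}$. Write $\a=a\a_1$ and $\b=b\a_2$ with $a,b\in\QQ_{>0}$ and $\a_1,\a_2\in S$. Since $\a\neq\b$ are distinct spherical roots they span distinct extremal rays of $-\cV(X)^\vee$, and as each ray carries a unique spherical root we must have $\a_1\neq\a_2$.

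First I would choose $v_0\in N_\QQ(T)=\|Hom|(\Xi(T),\QQ)$ lying in the antidominant Weyl chamber and adapted to $\{\a_1,\a_2\}$, i.e.\ $\<\a_1,v_0\>=\<\a_2,v_0\>=0$ and $\<\gamma,v_0\><0$ for every $\gamma\in S\setminus\{\a_1,\a_2\}$. Such a $v_0$ exists because the simple roots are linearly independent, so the pairings $\<\gamma,\cdot\>$ can be prescribed independently. Let $v\in N_\QQ(X)$ be the image of $v_0$ under the projection dual to $\Xi(X)\into\Xi(T)$; since $v_0$ is antidominant, $v\in\cV(X)$ by the description of $\cV(X)$ in \autoref{sec:Roots}. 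Every $\sigma\in\Sigma(X)$ satisfies $\sigma\in-\cV(X)^\vee\subseteq\QQ_{\ge0}S$, so I may write $\sigma=\sum_{\gamma\in S}m_\gamma\gamma$ with all $m_\gamma\ge0$, and then $v(\sigma)=\<\sigma,v_0\>=\sum_{\gamma\notin\{\a_1,\a_2\}}m_\gamma\<\gamma,v_0\>\le0$, with equality precisely when $\sigma$ is supported on $\{\a_1,\a_2\}$.

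It then remains --- and this is the real content of the lemma --- to show that the only spherical roots supported on $\{\a_1,\a_2\}$ are $\a$ and $\b$. Suppose $\sigma=m_1\a_1+m_2\a_2\in\Sigma(X)$ with $m_1,m_2\ge0$. Using $\a=a\a_1$ and $\b=b\a_2$ I rewrite $\sigma=(m_1/a)\a+(m_2/b)\b$, a nonnegative combination of the two generators $\a,\b\in-\cV(X)^\vee$. If both $m_1,m_2>0$ then, as $\a$ and $\b$ are linearly independent, $\sigma$ lies in the relative interior of the two-dimensional cone $\QQ_{\ge0}\{\a,\b\}$; consequently $\QQ_{\ge0}\sigma$ cannot be an extremal ray of $-\cV(X)^\vee$ (writing $\sigma$ as the sum of the two independent cone vectors $(m_1/a)\a$ and $(m_2/b)\b$ contradicts extremality), which contradicts $\sigma$ being a spherical root. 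Hence $m_1=0$ or $m_2=0$, so $\sigma$ is a positive multiple of $\a_2$ or of $\a_1$, and since each extremal ray carries a unique spherical root this forces $\sigma=\b$ or $\sigma=\a$. Combining with the previous paragraph yields $\{\sigma\in\Sigma(X)\mid v(\sigma)=0\}=\{\a,\b\}$ with $v\in\cV(X)$, which is exactly the statement that $\a$ and $\b$ are neighbors.

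The only step needing care is the extremality argument in the last paragraph; constructing the adapted antidominant $v_0$ and computing $v(\sigma)$ are routine. I note that one could instead route the whole argument through localization at $S$ (\autoref{sec:BigCell}): for a dominant $\lambda$ adapted to $\{\a_1,\a_2\}$ the corollary there identifies $\Sigma(X)\cap\lambda^\perp$ with $\Sigma(X^\lambda)$, and the same extremality computation shows this set equals $\{\a,\b\}$; the direct approach above, however, avoids passing to a toroidal compactification and uses only the intrinsic description of $\cV(X)$ and $\Sigma(X)$.
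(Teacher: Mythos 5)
Your proof is correct, and it is worth comparing with the paper's, which is a one-line argument using the \emph{second}, equivalent form of the definition of a set of neighbors: since $\a_1,\a_2$ are distinct simple roots, $\QQ_{\ge0}\a+\QQ_{\ge0}\b=\QQ_{\ge0}\a_1+\QQ_{\ge0}\a_2$ is already a face of the simplicial cone $\QQ_{\ge0}S$, and intersecting a face of a cone with a subcone (here $\QQ_{\ge0}\Sigma(X)\subseteq\QQ_{\ge0}S$, which contains $\a$ and $\b$) yields a face of the subcone; hence $\QQ_{\ge0}\{\a,\b\}$ is a face of $\QQ_{\ge0}\Sigma(X)$ and we are done. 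You instead verify the \emph{first}, existential form of the definition: your $v_0$ is exactly a supporting functional for that face of $\QQ_{\ge0}S$, but since the valuation form demands that $\{\a,\b\}$ be precisely the zero set of $v$ on $\Sigma(X)$, you are forced to supply the extremality argument ruling out a third spherical root supported on $\{\a_1,\a_2\}$ --- a point which the face formulation absorbs automatically, since a spherical root lying in the face would span an extremal ray of the two-dimensional simplicial cone $\QQ_{\ge0}\a+\QQ_{\ge0}\b$ and hence equal $\a$ or $\b$. What your route buys is self-containedness: it uses only the description of $\cV(X)$ from \autoref{sec:Roots} (in particular that it contains the image of the antidominant chamber) and needs neither the stated equivalence of the two definitions nor any general fact about faces restricted to subcones; what it costs is the extra extremality verification, which you carry out correctly. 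Both arguments ultimately rest on the same fact, the linear independence of the simple roots.
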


\begin{proof}

  The set $\QQ_{\ge0}\a+\QQ_{\ge0}\b$ is already a face of
  $\QQ_{\ge0}S$ and therefore also for the smaller cone
  $\QQ_{\ge0}\Sigma(X)$.
\end{proof}

The following statement can be used to exclude certain configurations
of colors and roots (see \cite{SpRoot}*{Proof of Thm.~4.5}).

\begin{proposition}\label{prop:typeAcolorInequal}

  Let $D\in\Delta(X)^\tA/$ be moved by $\a\in S^\tA/$ and let
  $\sigma\in\Sigma(X)$ be a neighbor of $\a$ with
  $\delta_D(\sigma)>0$. Then $\sigma\in S^\tA/$ and $D$ is moved by
  $\sigma$, as well.

\end{proposition}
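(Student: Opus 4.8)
The plan is to reduce to a rank-$2$ situation by the two localizations and then run a short combinatorial argument on the support of $\sigma$.

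Since $\sigma$ is a neighbor of $\a$, the cone $\QQ_{\ge0}\a+\QQ_{\ge0}\sigma$ is a face of $\QQ_{\ge0}\Sigma(X)$. Passing to a toroidal compactification and the orbit $Y$ whose cone $\cC_Y$ is dual to this face, \autoref{prop:LocalizationAtSigma} gives $\Sigma(Y)=\Sigma(X)\cap\<\cC_Y\>^\perp=\{\a,\sigma\}$. As $\a\in S\cap\<\cC_Y\>^\perp$ is of type \tA/, \autoref{lem:LocSigma2} shows that $\a$ keeps type \tA/ on $Y$, that $D$ localizes to a color $E=(D\cap Y)^{\|red|}$ moved by $\a$, and that $\delta_E=q\,\delta_D|_{\Xi_p(Y)}$ for a $p$-power $q$; since $\sigma\in\Xi_p(Y)$ this gives $\delta_E(\sigma)=q\,\delta_D(\sigma)>0$. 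If I prove the assertion for $(Y,E,\sigma)$, then \autoref{prop:LocalizationAtSigma} together with \autoref{lem:LocSigma2} (now applied to the simple root $\sigma$) carries ``$\sigma$ of type \tA/ moving $E$'' back to ``$\sigma$ of type \tA/ moving $D$'' on $X$. I may moreover localize at $S$ with $S'=\|supp|\a\cup\|supp|\sigma$ as in \autoref{lem:1PSG}: then $\a,\sigma\in\lambda^\perp$, so both survive and $\Sigma(X^\lambda)=\{\a,\sigma\}$, while \autoref{prop:ColorsX} preserves and reflects the type of $\a$, the color $D$, the power $q_{D,\a}$ and the value $\delta_D(\sigma)$. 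Thus I may assume $G$ is semisimple, $X$ has rank $2$, and $\Sigma(X)=\{\a,\sigma\}$.

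For the core, write $\sigma=\sum_{\b\in\|supp|\sigma}c_\b\b$ with all $c_\b>0$. Then $0<\delta_D(\sigma)=\sum_\b c_\b\,\delta_D(\b)$, so $\delta_D(\b)>0$ for some simple root $\b\in\|supp|\sigma$. Now I invoke the standard fact — the same $\P^1$-fibration computation underlying \autoref{prop:ColorRelations}, cf. \cite{BorSub} — that a color not moved by a simple root $\b$ pairs non-positively with $\b$; hence $\delta_D(\b)>0$ forces $D$ to be moved by $\b$. If $\b\ne\a$, then since $\a$ also moves $D$ and is of type \tA/, \autoref{lem:Multiple} forces $\b$ to be of type \tA/ as well (it can never be \tAA/), whence $\b\in\Sigma(X)$ by \autoref{prop:RootColors}; as $\Sigma(X)=\{\a,\sigma\}$ and $\b\ne\a$ this yields $\b=\sigma$, so $\sigma$ is a simple root of type \tA/ that moves $D$, exactly the claim. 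This already disposes of all $p$, the type-\tAA/ alternative possible for $p=2$ being excluded by \autoref{lem:Multiple}.

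The only remaining possibility, and the main obstacle, is $\b=\a$: every $\b\in\|supp|\sigma$ with $\delta_D(\b)>0$ equals $\a$, so $\a\in\|supp|\sigma$ while $\sigma\ne\a$ is genuinely non-simple. This amounts to proving that one spherical root can never lie in the support of another. Here the rank-$2$ geometry is essential: $\cV(X)=\{v\mid\a(v)\le0,\ \sigma(v)\le0\}$ has $\a$ and $\sigma$ as its two facet normals, and I would play the relation $q_{D,\a}\delta_D+q_{D',\a}\delta_{D'}=\a^r$ of \autoref{prop:ColorRelations} — using the \emph{second} color $D'$ that exists precisely because $\a$ is of type \tA/ — against the non-positivity of the pairing of the two distinct spherical roots (the characteristic-free analogue of Brion's fact \cite{BriES} that $\cV$ is a Weyl chamber). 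Evaluating at $\sigma$ gives $q_{D,\a}\delta_D(\sigma)+q_{D',\a}\delta_{D'}(\sigma)=\<\sigma,\a^\vee\>$, and I expect $\delta_D(\sigma)>0$ together with $\a\in\|supp|\sigma$ (which makes $\<\sigma,\a^\vee\>$ too large to be compatible with obtuseness) to contradict the constraints on $\delta_{D'}$ imposed by $\sigma$ being an extremal ray of $-\cV^\vee$. Reconciling the $G$-coroot pairing $\<\sigma,\a^\vee\>$ that appears in the color relations with the intrinsic obtuseness of the spherical root system, uniformly in $p$, is the delicate point I expect to occupy the bulk of the argument.
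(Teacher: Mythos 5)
Your first paragraph (the reduction to rank $2$ with $\Sigma(X)=\{\a,\sigma\}$ via localization at $\Sigma$ followed by localization at $S$, with \autoref{lem:Multiple} handling the $p=2$ type-\tAA/ alternative) is sound and is essentially the paper's own first step. The core of your argument, however, has a genuine gap --- in fact two. First, the decomposition $\delta_D(\sigma)=\sum_\b c_\b\,\delta_D(\b)$ is not well defined: $\delta_D$ is a functional on $\Xi(X)$ only, and the simple roots $\b\in\|supp|(\sigma)$ need in general not lie in $\Xi_\QQ(X)$, so the individual values $\delta_D(\b)$ have no meaning and there is no canonical extension of $\delta_D$ to the root lattice. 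For instance, for $X=(\SL(2)/T)\times(\SL(2)\times\SL(2)/\SL(2))$ one has $\Sigma(X)=\{\a_1,\a_2+\a_3\}$ and $\Xi_\QQ(X)=\QQ\a_1\oplus\QQ(\a_2+\a_3)$, so $\delta_D(\a_2)$ and $\delta_D(\a_3)$ are undefined; this persists after your reduction, since the reduction fixes $\Xi_\QQ$ to be spanned by $\a$ and $\sigma$. Second, the ``standard fact'' you invoke --- that a color not moved by a simple root $\b$ satisfies $\delta_D(\b)\le0$ --- does not follow from the $\P^1$-fibration computation behind \autoref{prop:ColorRelations}: in that computation the divisors of $\cF=B\backslash G$ corresponding to colors \emph{not} moved by $\b$ are $\pi$-saturated, hence disjoint from a generic fiber and contribute $0$ to $(s)\cdot F$; the computation therefore yields relations only among the colors moved by $\b$ and carries no sign information about the others. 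For colors of type \tB/ or \tAA/ the fact does hold, but only because there $\delta_D$ is proportional to a restricted coroot ($q_{D,\a}^{-1}\a^r$ resp.\ $\frac12q_{D,\a}^{-1}\a^r$). For colors of type \tA/ --- the case you need --- it is precisely the contrapositive of the proposition being proven (Luna's axiom (A1) in characteristic $0$, which Luna established via Wasserman's rank-$2$ tables, the very dependence this paper sets out to remove). So when $\sigma\in S$ your argument is circular, and when $\sigma\notin S$ it is both ill-posed and, as you concede yourself, incomplete.

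The case you leave open ($\sigma$ not simple, where one must show $\delta_D(\sigma)>0$ is impossible) is exactly where the paper's real work lies, and it is done by a mechanism absent from your outline: since $\delta_D(\a)>0$ and $\delta_D(\sigma)>0$, the cone generated by $\cV(X)$ and $\delta_D$ is all of $N_\QQ(X)$, which by \cite{Hyd}*{4.6} produces a morphism $X=G/H\pfeil G/P$ with $P$ parabolic and $\Delta(G/P)=\Delta(X)\setminus\{D\}$ (Pezzini's idea); a color count then forces $P=P_X$, and the analysis of the rank-$1$ orbit $Y$ with $\Sigma(Y)=\{\sigma\}$ in a toroidal completion --- whose fiber $P_X/H_1^{\|red|}$ must be ${\bf A}^1$, forcing $\sigma\in S^\tA/$ --- contradicts that count. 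Your closing paragraph gestures at a relation-chasing argument with $\delta_{D'}$ and obtuseness of $\cV$, but gives no proof; as it stands, the proposal does not establish the proposition.
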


\begin{proof}

  We first reduce to the case that $X$ is of rank $2$ with
  $\Sigma=\{\a,\sigma\}$. Because $\a$ and $\sigma$ are neighbors one
  can choose a pointed cone $\cC$ inside $\cV(X)\cap\{\a=\sigma=0\}$
  which is of codimension $2$ in $N_\QQ(X)$. Let $\Xq$ be the simple
  embedding corresponding to $\cC$ and let $Y$ be its closed
  orbit. Then \autoref{prop:LocalizationAtSigma} implies
  $\Sigma(Y)=\{\a,\sigma\}$.  Moreover, using the remark after
  \autoref{lem:1PSG2} there is a spherical variety $Y'$ with
  $\Sigma(Y')=\{\a,\sigma\}$ and $\|rk|Y'=2$. Moreover,
  \autoref{lem:LocSigma2} implies that $Y'$ still has a color $E$
  moved by $\a$ with $\delta_E(\sigma)=q\delta_D(\sigma)>0$. Let us
  assume that we can prove that $\sigma\in S^\tA/(Y')$ and that $E$ is
  moved by $\sigma$. Clearly, $D$ is moved by $\sigma$ in $X$, as
  well.  Moreover, since $\a\in\Sigma(X)$, either $\a\in S^\tA/(X)$ or
  $p=2$ and $\a\in S^\tAA/(X)$. But the latter case cannot happen
  since then $D$ could not be moved by any other simple root
  (\autoref{lem:Multiple}). So $\sigma\in S^\tA/(X)$, which finishes
  the reduction step.

  From now on we assume $\|rk|X=2$ and, without loss of generality,
  that $X=G/H$ where $H$ is reduced. Since, $\delta_D(\sigma)>0$ by
  assumption and $\delta_D(\a)=q_{D,\a}>0$, the cone generated by
  $\cV(X)$ and $\delta_D$ is the entire space $N:=N_\QQ(X)$. From that
  we get a morphism $X=G/H\pfeil G/P$ with $\|rank|G/P=\|dim|N/N=0$
  (\cite{Hyd}*{4.6})\footnote{The idea for this construction is due to
    Guido Pezzini.}. Hence $P$ is a parabolic subgroup with an
  identification $\Delta(G/P)=\Delta(G/H)\setminus\{D\}$. We may
  choose $P$ in such a way that it is \emph{opposite} to $B$.

  Every $\b\in S\setminus S^\tP/$ moves at least one color and $\a$
  moves even two. Assume first that these colors are \emph{not\/} all
  different. Then, according to \autoref{lem:Multiple} and
  \autoref{prop:TypeB} there are two possibilities:

  {\it a)} $\sigma=\a_1+q\a_2$ with $\a_1,\a_2\in S$ orthogonal. But
  then $\a_1$ and $\a_2$ would move the same color in $G/P$ which is
  impossible.

  {\it b)} $S^\tA/$ contains another element $\b$ besides $\a$. But
  then $\b\in\Sigma(X)$ (\autoref{prop:RootColors}), hence
  $\sigma=\b\in S^\tA/$. Moreover, there is a color $D'$ moved by both
  $\a$ and $\sigma$. We claim that $D'=D$. Suppose not. Then $D$ and
  $D'$ are the two colors moved by $\a$ and
  \autoref{prop:ColorRelations} implies the contradiction
  \begin{equation}
    \delta_D^{(\a)}(\sigma)=\<\sigma,\a^\vee\>-\delta_{D'}^{(\a)}(\sigma)=
    \<\sigma,\a^\vee\>-\frac{q_{D',\a}}{q_{D',\sigma}}\delta_{D'}^{(\sigma)}(\sigma)=
    \<\sigma,\a^\vee\>-\frac{q_{D',\a}}{q_{D',\sigma}}<0.
  \end{equation}
  Thus, we are exactly in the asserted situation, i.e., $\sigma\in
  S^\tA/$ moving $D$.

  So, assume from now on that the colors moved by all the $\b\in
  S\setminus S^\tP/$ are different. Then
  \begin{equation}\label{eq:38}
    \#\Delta(G/P)\ge\#S\setminus S^\tP/.
  \end{equation}
  Consider a toroidal completion $\Xq$ of $X$. Then the morphism
  $X\pfeil G/P$ extends to $\Xq$ (\cite{Hyd}*{4.1}). Every closed
  orbit in $\Xq$ is isogenous to $G/{}^-\!P_X$ where $P_X$ is the
  parabolic attached to $S^\tP/$. Hence $P_X\subseteq
  P^{\|red|}$. Thus \eqref{eq:38} implies $P=P_X$.

  Let $Y=G/H_1\subset\Xq$ be the rank-1-orbit corresponding to the
  half{}line $\cV(X)\cap\{\sigma=0\}$. Then
  $\Sigma(Y)=\{\sigma\}$. Because of $P=P_X$, the fiber
  $P_X/H_1^{\|red|}$ is one-dimensional and therefore isomorphic to
  ${\bf P}^1$, $\Gm$, or ${\bf A}^1$.  The first case is impossible
  since $H_1$ is not parabolic. The second case is excluded since
  $H_1$ is not horospherical. Thus $P_X/H_1^{\|red|}\cong{\bf A}^1$.

  This means in particular that (a conjugate of) $H_1$ contains the
  maximal torus $T$ of $G$ and that $H_1$ contains all root subgroups
  $U_\b$ which are contained in $P_X$ except for one, which is denoted
  by $\gamma$, and which lies in $P_X^u$. The $U_\b$ corresponding to
  $\b\in S$ generate the maximal unipotent subgroup of $G$. This
  implies $\gamma\in S$. Moreover $U_\gamma$ is a 1-dimensional module
  for the Levi part of $P_X$. This shows that $H_1^{\|red|}$ is, in
  fact, induced from $\PGL(2)/\Gm$ (on induction in arbitrary
  characteristic see \cite{SpRoot}*{\S2, in particular Lemma
    2.1}). Hence $\sigma\in S^\tA/$. But in that case \eqref{eq:38}
  would be a strict inequality which is not true because of $P=P_X$.
\end{proof}

\autoref{prop:typeAcolorInequal} can be used to give bounds for
$\delta_D(\sigma)$:

\begin{corollary}\label{cor:bound}

  With $\a$ and $\sigma$ as above, assume that $\sigma\not\in S$ or
  that $\sigma\in S$ but does not move either color moved by
  $\a$. Then
  \begin{equation}\label{eq:42}
    q_{D,\a}^{-1}\<\sigma,\a^\vee\>\le\delta_D(\sigma)\le0.
  \end{equation}

\end{corollary}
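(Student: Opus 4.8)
The plan is to derive both inequalities in \eqref{eq:42} from \autoref{prop:typeAcolorInequal} combined with the type-\tA/ relation of \autoref{prop:ColorRelations}. Since $\a\in S^\tA/$, it moves exactly two colors, namely $D$ and a second color $D'$, both of which are of type \tA/ (the type of a color being well defined by the remark after \autoref{lem:Multiple}). The key observation is that the hypothesis is phrased so that \autoref{prop:typeAcolorInequal} applies simultaneously to $D$ and to $D'$.

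First I would obtain the upper bound $\delta_D(\sigma)\le0$. Under the stated hypothesis the pair $(\a,\sigma)$ cannot satisfy the conclusion of \autoref{prop:typeAcolorInequal} for the color $D$: if $\sigma\not\in S$ then in particular $\sigma\not\in S^\tA/$, while if $\sigma\in S$ then by assumption $\sigma$ does not move $D$. In either case the contrapositive of \autoref{prop:typeAcolorInequal} forces $\delta_D(\sigma)\le0$, which is the right-hand inequality.

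For the lower bound I would run the identical argument with $D$ replaced by $D'$. The hypothesis was chosen so that $\sigma$ moves \emph{neither} color moved by $\a$ (in the case $\sigma\in S$), so it does not move $D'$ either; hence \autoref{prop:typeAcolorInequal}, applied to the color $D'$ (which is again of type \tA/ and moved by $\a$), yields $\delta_{D'}(\sigma)\le0$ as well. It then remains to combine this with the type-\tA/ relation $\delta_D^{(\a)}+\delta_{D'}^{(\a)}=\a^r$ of \autoref{prop:ColorRelations}, which evaluated at $\sigma$ reads
\begin{equation}
  q_{D,\a}\delta_D(\sigma)+q_{D',\a}\delta_{D'}(\sigma)=\<\sigma,\a^\vee\>.
\end{equation}
Since $q_{D',\a}>0$ and $\delta_{D'}(\sigma)\le0$, the middle summand is $\le0$, whence $q_{D,\a}\delta_D(\sigma)\ge\<\sigma,\a^\vee\>$; dividing by the positive $p$-power $q_{D,\a}$ gives $\delta_D(\sigma)\ge q_{D,\a}^{-1}\<\sigma,\a^\vee\>$, the left-hand inequality.

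The essential content has already been expended in \autoref{prop:typeAcolorInequal}, so here I do not anticipate any genuine obstacle. The only point requiring care is recognizing that the two clauses of the hypothesis ($\sigma\not\in S$, or $\sigma\in S$ not moving either color) are exactly what is needed to apply that proposition to both $D$ and $D'$; once nonpositivity of $\delta_D(\sigma)$ and $\delta_{D'}(\sigma)$ is secured, the lower bound is a one-line consequence of the linear relation among the renormalized $\delta^{(\a)}$.
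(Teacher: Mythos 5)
Your proposal is correct and coincides with the paper's own argument: the right-hand inequality is exactly the contrapositive of \autoref{prop:typeAcolorInequal} applied to $D$, and the left-hand inequality is obtained, as in the paper, by applying the same proposition to the other color $D'$ moved by $\a$ and combining it with the relation $\delta_{D'}^{(\a)}=\a^r-\delta_D^{(\a)}$ from \autoref{prop:ColorRelations}. Your write-up merely makes explicit the evaluation at $\sigma$ and the sign bookkeeping that the paper leaves to the reader.
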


\begin{proof}

  The righthand inequality follows directly from
  \autoref{prop:typeAcolorInequal}. For the lefthand inequality apply
  \autoref{prop:typeAcolorInequal} to the other color $D'$ moved by
  $\a$ and observe that $\delta_{D'}^{(\a)}=\a^r-\delta_D^{(\a)}$
  (\autoref{prop:ColorRelations}).
\end{proof}

In positive characteristic, these bounds are less valuable since we
didn't derive a bound on the denominator of $\delta_D(\sigma)$. Such a
bound exists (in terms of the $q_{D,\a}$'s) and will be included in a
future paper. Then \eqref{eq:42} leaves only finitely many
possibilities for $\delta_D(\sigma)$.

\section{The $p$-spherical system}
\label{sec:SphericalSystem}

We summarize what we have proved so far in terms of a combinatorial
structure which generalizes Luna's spherical systems. But first, we
need some more terminology:

\begin{definition} Let $G$ be a connected reductive group.

  \begin{enumerate}

  \item An element $\sigma\in\Xi_\QQ(T)$ is called a \emph{spherical
      root for $G$} if there is a spherical $G$-variety $X$ such that
    $\sigma$ is a spherical root for $X$. The set of spherical roots
    for $G$ is denoted by $\Sigma(G)$.

  \item A spherical root $\sigma\in\Sigma(G)$ is \emph{compatible} to
    a subset $S^\tP/\subseteq S$ if there is a spherical $G$-variety
    $X$ with $\sigma\in\Sigma(X)$ and $S^\tP/=S^\tP/(X)$.

  \end{enumerate}

\end{definition}

\begin{remarks} 1. \autoref{prop:LocalizationAtSigma} shows that in
  the definition above one may assume without loss of generality
  $\|rk|X=1$.

  2. Spherical varieties of rank $1$ have been classified by Akhiezer
  \cite{Akh} in characteristic zero and \cite{SpRoot} in general. In
  particular, for every $G$ there is a complete description of
  $\Sigma(G)$ (see \cite{SpRoot}*{\S\RankOne\ and \S\TABLE}).

  3. One result of that classification is that $\Sigma(G)$ is infinite
  unless $\|char|k=0$ or $G$ is simple of rank $\le2$.

\end{remarks}

For the following recall that $\ZZ_p=\ZZ[\frac1p]$ and
$\Xi_p:=\Xi\otimes\ZZ_p$ for any abelian group $\Xi$.

\begin{definition} Let $p\ne2$. Then a $p$-spherical system for $G$
  consists of

  \begin{itemize}

  \item a subgroup $\Xi\subseteq\Xi(T)$,
  \item a subset $\Sigma\subseteq\Xi_p\cap\Sigma(G)$,
  \item a subset $S^{\tP/}\subseteq S$,
  \item a finite set $\Delta^\tA/$,
  \item a map
    $\delta:\Delta^\tA/\pfeil\|Hom|(\Xi,\ZZ):D\mapsto\delta_D$, and
  \item a map $S\setminus (S^\tP/\cup S^\tA/)\pfeil p^\NN:\a\mapsto
    q_\a$ where $S^\tA/:=S\cap\Sigma$.

  \end{itemize}

\end{definition}

\noindent Of course, these data are subject to some conditions. Here,
we list only those which are straightforward generalizations of Luna's
axioms. It is safe to say that more axioms have to be imposed which
deal specifically with issues of positive characteristic. We keep the
notation that $\a^r$ denotes the restriction of $\a^\vee$ to $\Xi$.

\begin{itemize}

\item[A1]All $\sigma\in\Sigma$ are primitive vectors of $\ZZ
  S\cap\Xi_p$.

\item[A2]$\a^r=0$ for all $\a\in S^\tP/$.

\item[A3]Every $\sigma\in\Sigma$ is compatible to $S^\tP/$.

\item[A4]For all $D\in\Delta^\tA/$ and $\sigma\in\Sigma\setminus
  S^\tA/$ holds $\delta_D(\sigma)\le0$.

\item[A5] For every $\a\in S^\tA/$ there are exactly two
  $D\in\Delta^\tA/$ with $\delta_D(\a)>0$.  Conversely, for every
  $D\in\Delta^\tA/$ there is at least one $\a\in S^\tA/$ with
  $\delta_D(\a)>0$.

\item[A6]For $\a\in S^\tA/$ let $D^+\ne D^-\in\Delta^\tA/$ with
  $\delta_{D^\pm}(\a)>0$. Then
  $q_{\a,D^\pm}:=\delta_{D^\pm}(\a)^{-1}\in p^\NN$ and
  $q_{\a,D^+}\delta_{D^+}+q_{\a,D^-}\delta_{D^-}=\a^r$.

\item[A7]Let $\a\in S$ with $2\a\in\Sigma$. Then $\a\not\in\Xi_p$ and
  $\frac12\a^r(\Xi_p)\subseteq\ZZ_p$. Moreover, $\a^r(\sigma)\le0$ for
  all $\sigma\in\Sigma\setminus\{2\a\}$.

\item[A8]Let $q\a_1+\a_2\in\Sigma$ with $\a_1\perp\a_2$. Then
  $q^{-1}\a_1^r=\a_2^r$ and $q^{-1}q_{\a_1}=q_{\a_2}$.

\end{itemize}

The point is, of course, that for $p\ne2$ every homogeneous spherical
variety $X$ gives rise to a $p$-spherical system. More specifically we
put
\begin{equation}
  \Xi:=\Xi(X),\ \Sigma:=\Sigma(X),\ S^\tP/:=S^\tP/(X),\
  \Delta^\tA/:=\Delta^\tA/(X),\ \delta_D:=\delta_D^X.
\end{equation}
The only new constituents are the $p$-powers. For $\a\in
S\setminus(S^\tP/\cup S^\tA/)=S^\tB/\cup S^\tAA/$ we define $q_\a$ as
$q_{\a,D}$ from \autoref{prop:ColorRelations} where $D$ is the unique
color moved by $\a$.

Now we verify all axioms.

\begin{itemize}

\item[A1] Holds by definition of $\Sigma(X)$.

\item[A2] See \autoref{prop:ColorRelations}.

\item[A3] Follows from the definition of ``compatibility''.

\item[A4] This is \autoref{cor:bound} in conjunction with
  \cite{SpRoot}*{\Neighbor} which implies that for $p\ne2$ any two
  spherical roots are neighbors.

\item[A5] The first part follows also from \autoref{cor:bound} and
  \autoref{prop:ColorRelations}. The second part holds by definition
  of $\Delta^\tA/(X)$.

\item[A6]This follows from \autoref{prop:ColorRelations}.

\item[A7]The first part is
  \autoref{prop:RootColors}\ref{it:RootColors.b} and
  \autoref{cor:Parity}. The second follows from
  \cite{SpRoot}*{\obtuse}.

\item[A8]This is \autoref{prop:TypeB} and
  \autoref{prop:ColorRelations}.

\end{itemize}

\begin{remarks} 1. In characteristic $0$, Luna (see
  \cite{LuTypA}*{5.1}) used Wasserman's tables \cite{Wass} of
  spherical rank-2-varieties to verify the axioms. So our approach is
  more conceptual in that it uses only the classification of rank-$1$-
  but not of rank-$2$-varieties\footnote{Note, that the tables in
    \cite{Wass} are slightly incomplete: of the series $G
    =\Sp(2n)\times\Sp(2)$, $H=\Sp(2n-2)\times\Sp(2)$ with $n\ge2$ only
    the first case, $n=2$, is stated.  I would like to thank Guido
    Pezzini for pointing that out to me.}.

  2. The case $p=2$ requires some modifications. To distinguish simple
  roots of type \tA/ and \tAA/ we redefine $S^\tA/$ as
  \begin{equation}
    S^\tA/:=\{\a\in S\cap\Sigma\mid
    \delta_D(\a)>0\hbox{ for some }D\in\Delta^\tA/\}
  \end{equation}
  This works indeed for spherical systems coming from spherical
  varieties: Suppose there is $\a\in S^\tAA/(X)$ and $D\in\Delta^\tA/$
  with $\delta_D(\a)>0$. Then $D$ is moved by some $\beta\in
  S^\tA/$. Since $\a$ and $\b$ are neighbors
  (\autoref{lem:NeighborSimple}) we get a contradiction to
  \autoref{prop:typeAcolorInequal}.

  With this change all axioms hold for $p=2$ except for one: in A4 one
  has to require that $\sigma$ and $\a$ are neighbors. Observe that A7
  is vacuously satisfied.

  3. It is a natural question whether spherical varieties are
  classified by their $p$-spherical system. In characteristic zero,
  the answer is ``yes'' according to work by Luna~\cite{LuTypA},
  Losev~\cite{Losev}, Cupit-Foutou~\cite{CuFou}, and
  Bravi-Pezzini~\cite{BrPezz1,BrPezz2,BrPezz3}. For $p\ne2$ or $3$, it
  might be possible that the $p$-spherical system determines the
  variety uniquely. For example, all complete homogeneous varieties
  are classified by $p$-spherical systems with $\Xi=0$ (see the
  example before \autoref{lem:finitemor}). Furthermore, the author
  convinced himself that this also holds for spherical varieties of
  rank 1. If $p=2$ or $p=3$ then uniqueness does not even hold for
  complete homogeneous varieties (see \cite{Wenz}*{Prop.~4}) due to
  exceptional isogenies. If $p=2$ then uniqueness is wrong already for
  $G=\SL(2)$ as then $G$ contains non-standard horospherical subgroup
  schemes (see \cite{KnRk1}).

  4. The above list of axioms A1--A8 is definitely only
  preliminary. Even in the rank-1-case they do not suffice. For
  example, there is no axiom bounding the lattice $\Xi$ from below. We
  plan to return to this problem in the future.

\end{remarks}

\begin{bibdiv}
  \begin{biblist}

    \bib{Akh}{article}{ author={Akhiezer, D. N.}, title={Equivariant
        completion of homogeneous algebraic varieties by homogeneous
        divisors}, journal={Ann. Global Anal. Geom.}, volume={1},
      pages={49--78}, date={1983} }

    \bib{BiaBir}{article}{ author={Bia\l ynicki-Birula, A.},
      title={Some properties of the decompositions of algebraic
        varieties determined by actions of a torus},
      journal={Bull. Acad. Polon. Sci. Sér. Sci. Math. Astronom. Phys.},
      volume={24}, pages={667--674}, date={1976} }

    \bib{BiaSwi}{article}{ author={Bia{\l}ynicki-Birula, A.},
      author={\polS wi\pole cicka, J.}, title={Complete quotients by
        algebraic torus actions}, conference={ title={Group actions
          and vector fields}, address={Vancouver, B.C.}, date={1981},
      }, book={ series={Lecture Notes in Math.}, volume={956},
        publisher={Springer}, place={Berlin}, }, date={1982},
      pages={10--22}, }

    \bib{BrPezz1}{article}{ author={Bravi, P.}, author={Pezzini, G.},
      title={A constructive approach to the classification of
        wonderful varieties}, journal={Preprint}, pages={34 pages},
      date={2011},
      arxiv={1103.0380} }

    \bib{BrPezz2}{article}{ author={Bravi, P.}, author={Pezzini, G.},
      title={Primitive wonderful varieties}, journal={Preprint},
      pages={16 pages}, date={2011},
      arxiv={1106.3187} }

    \bib{BrPezz3}{article}{ author={Bravi, P.}, author={Pezzini, G.},
      title={The spherical systems of the wonderful reductive
        subgroups}, journal={Preprint}, pages={12 pages}, date={2011},
      arxiv={1109.6777} }

    \bib{BriES}{article}{ author={Brion, M.}, title={Vers une
        généralisation des espaces symétriques}, journal={J. Algebra},
      volume={134}, pages={115--143}, date={1990} }

    \bib{CuFou}{article}{ author={Cupit-Foutou, S.}, title={Wonderful
        varieties: a geometrical realization}, journal={Preprint},
      pages={34 pages}, date={2009},
      arxiv={0907.2852} }

    \bib{EGA4}{article}{ label={EGA4} author={Grothendieck, A.},
      title={Éléments de géométrie algébrique. IV. Étude locale des
        schémas et des morphismes de schémas. II},
      journal={Inst. Hautes Études Sci. Publ. Math.}, number={24},
      date={1965}, pages={231} }

    \bib{HaLau}{article}{ author={Haboush, W.}, author={Lauritzen,
        N.}, title={Varieties of unseparated flags}, conference={
        title={Linear algebraic groups and their representations (Los
          Angeles, CA, 1992)}, }, book={ series={Contemp. Math.},
        volume={153}, publisher={Amer. Math. Soc.}, place={Providence,
          RI}, }, date={1993}, pages={35--57}, }

    \bib{Jou}{book}{ author={Jouanolou, J.-P.}, title={Théorèmes de
        Bertini et applications}, series={Progress in Mathematics},
      volume={42}, publisher={Birkhäuser Boston Inc.}, place={Boston,
        MA}, date={1983}, pages={ii+127}, }

    \bib{Hyd}{article}{ author={Knop, F.}, title={The Luna-Vust theory
        of spherical embeddings}, conference={ title={Proceedings of
          the Hyderabad Conference on Algebraic Groups (Hyderabad,
          1989)}, }, book={ publisher={Manoj Prakashan},
        place={Madras}, }, date={1991}, pages={225--249}, }

    \bib{InvBew}{article}{ author={Knop, F.}, title={Über Bewertungen,
        welche unter einer reduktiven Gruppe invariant sind},
      journal={Math. Ann.}, volume={295}, pages={333--363},
      date={1993} }

    \bib{BorSub}{article}{ author={Knop, F.}, title={On the set of
        orbits for a Borel subgroup}, journal={Comment. Math. Helv.},
      volume={70}, pages={285--309}, date={1995} }

    \bib{KnRk1}{article}{ author={Knop, F.}, title={Homogeneous
        varieties for semisimple groups of rank one},
      journal={Compositio Math.}, volume={98}, pages={77--89},
      date={1995} }

    \bib{SpRoot}{article}{ author={Knop, F.}, title={Spherical roots
        of spherical varieties}, journal={Preprint}, pages={19 pages},
      date={2013},
      arxiv={1303.2466} }

    \bib{Losev}{article}{ author={Losev, I.}, title={Uniqueness
        property for spherical homogeneous spaces}, journal={Duke
        Math. J.}, volume={147}, pages={315--343}, date={2009},
      arxiv={0703543} }

    \bib{LuGC}{article}{ author={Luna, D.}, title={Grosses cellules
        pour les variétés sphériques}, conference={ title={Algebraic
          groups and Lie groups}, }, book={
        series={Austral. Math. Soc. Lect. Ser.}, volume={9},
        publisher={Cambridge Univ. Press}, place={Cambridge}, },
      date={1997}, pages={267--280}, }

    \bib{LuTypA}{article}{ author={Luna, D.}, title={Variétés
        sphériques de type A}, journal={Publ. Math. Inst. Hautes
        Études Sci.}, volume={94}, pages={161–226}, date={2001} }

    \bib{LuVu}{article}{ author={Luna, D.}, author={Vust, Th.},
      title={Plongements d'espaces homogènes},
      journal={Comment. Math. Helv.}, volume={58}, date={1983},
      pages={186--245}, }

    \bib{Schalke}{article}{ author={Schalke, B.}, title={Sphärische
        Einbettungen in positiver Charakteristik},
      journal={Diplomarbeit (Universität Erlangen)}, pages={39 pages},
      date={2011} }

    \bib{Sum}{article}{ author={Sumihiro, H.}, title={Equivariant
        completion}, journal={J. Math. Kyoto Univ.}, volume={14},
      pages={1--28}, date={1974} }

    \bib{Wass}{article}{ author={Wasserman, B.}, title={Wonderful
        varieties of rank two}, journal={Transform. Groups},
      volume={1}, pages={375--403}, date={1996} }

    \bib{Wenz0}{article}{ author={Wenzel, Ch.}, title={Classification
        of all parabolic subgroup-schemes of a reductive linear
        algebraic group over an algebraically closed field},
      journal={Trans. Amer. Math. Soc.}, volume={337}, date={1993},
      pages={211--218}, }

    \bib{Wenz}{article}{ author={Wenzel, Ch.}, title={On the structure
        of nonreduced parabolic subgroup-schemes}, conference={
        title={}, address={University Park, PA}, date={1991}, },
      book={ series={Proc. Sympos. Pure Math.}, volume={56},
        publisher={Amer. Math. Soc.}, place={Providence, RI}, },
      date={1994}, pages={291--297}, }

  \end{biblist}
\end{bibdiv}
\end{document}